\documentclass[11pt]{article}

\usepackage{amsmath,amssymb,amsthm}
\usepackage{mathtools}
\usepackage{geometry}
\usepackage{hyperref}
\usepackage{enumitem}
\usepackage{xcolor}
\newtheorem{theorem}{Theorem}[section]
\newtheorem{lemma}[theorem]{Lemma}
\newtheorem{proposition}[theorem]{Proposition}
\newtheorem{corollary}[theorem]{Corollary}
\newtheorem{definition}[theorem]{Definition}
\newtheorem{remark}[theorem]{Remark}

\title{Spectral distribution of Jacobi weighted histopolation matrices via GLT theory}

\author{A. Guessab, F. Nudo, S. Serra-Capizzano}

\date{}

\begin{document}
\maketitle

\begin{abstract}

In this paper we study a weighted histopolation problem on $[-1,1]$ associated with Jacobi weights.
In the first part of the present work we prove results in approximation theory,
while in the second we analyze the resulting matrices from an asymptotic linear
algebra perspective. More in detail, in the first part, given weighted cell averages, we construct a reconstruction operator based on weighted primitives of Jacobi polynomials and investigate the resulting discretization matrices. At any fixed discretization level, we derive an exact factorization of the
histopolation matrix through a backward-difference operator and a sampling
operator of Jacobi weighted primitives. Combining a sharp integration by parts
identity with the three-term recurrence of Jacobi polynomials, we further show
that the primitive sampling operator admits an explicit decomposition involving
a tridiagonal coupling matrix in the Jacobi spectral index. This yields a tridiagonal factor representation of the histopolation matrix. In the second part, under standard mesh-regularity assumptions, we show that all the various induced matrix sequences belong to the Generalized Locally Toeplitz (GLT) class, by describing in detail the related GLT symbols. As a consequence, we provide the corresponding spectral distributions and
discuss their implications for numerical stability when solving the associated
linear systems.
\end{abstract}

\section{Introduction}
Approximation from averaged data is a natural alternative to classical
interpolation when pointwise samples are either unavailable or not the most
appropriate form of information. In many applications, the measured quantities
are integrals over cells, faces, intervals, or more general geometric
regions~\cite{Kak:2001:POC,Natterer:2001:TMO,Palamodov:2016:RFI,Bosner:2020:AOC}.
In such situations, the reconstruction procedure should be consistent with the
integral nature of the data. This is the basic idea behind histopolation
methods. In these methods, the degrees of freedom are not point evaluations,
but average values or moment-type functionals. In recent years, several
polynomial histopolation methods have been introduced and studied~\cite{Bruni:2025:PHO,Bruno:2026:BPH}. When the degrees of freedom are defined
by weighted integral functionals, this naturally leads to \emph{weighted
histopolation}~\cite{DellAccio:2026:AGP,Nudo:2026:FRU,DellAccio:2026:NAM}. This setting appears in
several contexts, including constrained approximation, weighted mean-value
problems, geometric design, and reconstruction from averaged data; see, e.g.,
\cite{Demichelis:1995:GAO,Bose:1965:FSA,Diagana:2011:TEO,Pesenson:2019:ASA,Pesenson:2019:WSA}.
From a mathematical point of view, the presence of a weight may affect basic
structural properties of the histopolation problem, including unisolvence and
stability~\cite{Guessab:2025:QWH}. After choosing a basis for the
reconstruction space, the weighted moment conditions lead to structured
matrices, whose asymptotic behaviour as the matrix sizes tend to infinity is not immediate, even when unisolvence is
guaranteed at each fixed discretization level.

In this paper we consider a weighted histopolation problem on the interval
$[-1,1]$ associated with Jacobi weights
\[
\omega_{\alpha,\beta}(t)=(1-t)^\alpha(1+t)^\beta, \quad t\in[-1,1].
\]
This class of weights is particularly natural, since it is associated with a
large family of orthogonal polynomials and provides useful weighted estimates.
More precisely, starting from weighted cell averages on a partition of
$[-1,1]$, we reconstruct a polynomial in a Jacobi-type basis. We use
weighted primitives of Jacobi polynomials to derive an exact factorization of
the histopolation matrix. This factorization links the approximation problem with its asymptotic matrix
analysis. More precisely, we prove that the histopolation matrix can be written
as the product of a backward-difference operator and a primitive sampling
matrix. This representation allows us to separate the asymptotically relevant
component from a term that is negligible in the singular value distribution sense. The asymptotic analysis is performed by means of Generalized Locally Toeplitz (GLT) theory, which provides a general framework for describing the singular
value and eigenvalue distributions of structured matrix sequences arising from
numerical discretizations; see
\cite{Garoni:2018:GLT1,Garoni:2018:GLT2}
for the theory and
\cite{Dorostkar:2016:SAO,Salinelli:2016:EES,Garoni:2019:SBA,Benedusi:2024:MEC}
for related applications. A further goal of the present paper is to provide a GLT analysis of the matrix sequences
generated by weighted Jacobi histopolation. Under standard mesh-regularity
assumptions, we identify the relevant GLT symbols and derive the corresponding
singular value distributions. We also prove stability bounds in a mesh-weighted
discrete norm, valid for arbitrary partitions and for any discretization
level.

The paper is organized as follows. In Section~\ref{sec2} we formulate the
weighted histopolation problem on $[-1,1]$ and recall the required estimates for
Jacobi polynomials. We introduce weighted primitive
functions and derive an exact factorization of the histopolation matrix. In
Section~\ref{sec3} we recall the GLT notation and tools used throughout the
asymptotic analysis. In Section~\ref{sec4} we analyze the matrix sequences
generated by the weighted Jacobi histopolation problem. We identify negligible
terms, compute the relevant GLT symbols, derive the corresponding singular
value distributions, and prove stability bounds in a mesh-weighted discrete
norm. Section~\ref{sec5} contains the numerical experiments, which confirm the
theoretical results, and Section~\ref{sec6} is devoted to conclusions and to open problems.

\section{Weighted Jacobi histopolation}\label{sec2}

In the current section we formulate the weighted histopolation problem associated with
Jacobi weights and present the basic approximation results on Jacobi
polynomials needed in the sequel. We start by recalling a few standard identities and weighted estimates for Jacobi polynomials.

\subsection{Jacobi polynomial estimates}
Let $\alpha,\beta>-1$, and let $P^{(\alpha,\beta)}_j$ denote the Jacobi polynomial
of degree $j$ on $[-1,1]$, orthogonal with respect to the weight
\begin{equation}\label{jacweight}
\omega_{\alpha,\beta}(t)=(1-t)^{\alpha}(1+t)^{\beta}.
\end{equation}
We shall use the following classical identities for Jacobi polynomials. We state
them here to fix the notation and refer the reader
to~\cite{Szego:1975:OP,Milovanovic:1997:OPS} for further details.

 The following differentiation formula holds
\begin{equation}\label{eq:jacobi-derivative}
\frac{d}{dt}P^{(\alpha,\beta)}_j(t)
=
\frac{1}{2}(j+\alpha+\beta+1)
P^{(\alpha+1,\beta+1)}_{j-1}(t),
\quad
P_{-1}^{(\alpha,\beta)}=0,
\end{equation}
together with the three-term recurrence relation
\begin{equation}\label{eq:jacobi-recurrence}
t P^{(\alpha,\beta)}_j(t)
=
a_j P^{(\alpha,\beta)}_{j+1}(t)
+
b_j P^{(\alpha,\beta)}_j(t)
+
c_j P^{(\alpha,\beta)}_{j-1}(t), \quad j\ge1,
\end{equation}
where
\begin{align}
a_j &=
\frac{2(j+1)(j+\alpha+\beta+1)}
{(2j+\alpha+\beta+1)(2j+\alpha+\beta+2)}, \label{aj}\\
b_j &=
\frac{\beta^2-\alpha^2}
{(2j+\alpha+\beta)(2j+\alpha+\beta+2)}, \label{bj}\\
c_j &=
\frac{2(j+\alpha)(j+\beta)}
{(2j+\alpha+\beta)(2j+\alpha+\beta+1)}. \label{cj}
\end{align}
Moreover, the Jacobi weight satisfies
\begin{equation*}
\left(1-t^2\right)\omega_{\alpha,\beta}(t)
=
\omega_{\alpha+1,\beta+1}(t).
\end{equation*}
We denote by $\widehat P_j^{(\alpha,\beta)}$ the orthonormal
Jacobi polynomial of degree $j$, namely
\begin{equation}\label{ortpol}
\widehat P_j^{(\alpha,\beta)}(x)
=
\frac{P_j^{(\alpha,\beta)}(x)}{\sqrt{K_j}},
\end{equation}
where
\[
K_j=
\int_{-1}^1
\left|P_j^{(\alpha,\beta)}(x)\right|^2
\omega_{\alpha,\beta}(x)dx.
\]
The classical formula for the Jacobi norm gives
\begin{equation}\label{Kj}
K_j
=
\frac{2^{\alpha+\beta+1}}{2j+\alpha+\beta+1}
\frac{\Gamma(j+\alpha+1)\Gamma(j+\beta+1)}
{\Gamma(j+1)\Gamma(j+\alpha+\beta+1)},
\quad j\ge0.
\end{equation}
We shall use the following standard consequence of the asymptotic expansion of
the Gamma function~\cite{Erdelyi:1953:HTF}. We include the proof for
completeness.
\begin{lemma}
\label{lem:gamma_ratio_asymptotic}
Let $a,b\in\mathbb{R}$ be fixed. Then
\[
\lim_{n\to\infty}\frac{\Gamma(n+a)}{\Gamma(n+b)}n^{b-a}=1.
\]
\end{lemma}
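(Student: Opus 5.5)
The plan is to use Stirling's formula in logarithmic form. For large $x$,
\[
\log\Gamma(x)=\left(x-\tfrac12\right)\log x - x + \tfrac12\log(2\pi) + o(1), \qquad x\to\infty.
\]
Since $a$ and $b$ are fixed, for $n$ large enough both $n+a$ and $n+b$ are positive, so all quantities below are well defined. I will take the difference of the expansions at $x=n+a$ and $x=n+b$ and show that
\[
\log\Gamma(n+a)-\log\Gamma(n+b)-(a-b)\log n\to 0.
\]
Exponentiating then gives the claim.

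Write $\log(n+a)=\log n+\log(1+a/n)$. The expansion gives
\[
\left(n+a-\tfrac12\right)\log n+\left(n+a-\tfrac12\right)\log(1+a/n)-n-a+o(1).
\]
Since $\log(1+a/n)=a/n+O(n^{-2})$, the middle term equals $a+o(1)$. Hence
\[
\log\Gamma(n+a)=\left(n+a-\tfrac12\right)\log n-n+\tfrac12\log(2\pi)+o(1).
\]
The same holds with $b$ in place of $a$. Subtracting, the terms $n\log n$, $-\tfrac12\log n$, $-n$ and $\tfrac12\log 2\pi$ all cancel, leaving
\[
(a-b)\log n+o(1).
\]
This is exactly what is needed.

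The only point requiring care is the bookkeeping of the $n\log(1+a/n)$ term. It contributes the constant $a$, which cancels against the $-a$ coming from $-x$, and this cancellation must be checked. An alternative route is to reduce to $b=0$ and $a\in[0,1)$ using the functional equation $\Gamma(x+1)=x\Gamma(x)$, noting that finite products of factors like $(n+c)/n$ tend to $1$. One could then apply Gautschi/Wendel-type inequalities, derived from log-convexity of $\Gamma$, to squeeze the ratio. I would use Stirling as the primary argument, since the paper cites the asymptotic expansion of the Gamma function.
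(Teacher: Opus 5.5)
Your proof is correct and follows the same route as the paper. The paper applies Stirling's formula in multiplicative form, $\Gamma(x)\sim\sqrt{2\pi}\,x^{x-\frac12}e^{-x}$, and uses the limits $(1+a/n)^n\to e^a$ and $(1+b/n)^n\to e^b$; this is exactly the exponentiated version of your step $(n+a-\tfrac12)\log(1+a/n)\to a$, including the cancellation against $e^{-a}$ from the factor $e^{-x}$.
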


\begin{proof}
By Stirling's formula, we have
\[
\lim_{x\to+\infty}
\frac{\Gamma(x)}{\sqrt{2\pi}x^{x-\frac{1}{2}}e^{-x}}=1.
\]
Then, we get
\begin{eqnarray*}
    \lim_{n\to\infty}
\frac{\Gamma(n+a)}
{\sqrt{2\pi}(n+a)^{n+a-\frac{1}{2}}e^{-(n+a)}}&=&1,\\
\lim_{n\to\infty}
\frac{\Gamma(n+b)}
{\sqrt{2\pi}(n+b)^{n+b-\frac{1}{2}}e^{-(n+b)}}&=&1.
\end{eqnarray*}
Taking the ratio yields
\begin{equation}\label{stirl_formula}
    \lim_{n\to\infty}
\frac{\Gamma(n+a)}{\Gamma(n+b)}
e^{a-b}
\frac{(n+b)^{n+b-\frac{1}{2}}}{(n+a)^{n+a-\frac{1}{2}}}
=1.
\end{equation}
Moreover
\[
\frac{(n+b)^{n+b-\frac{1}{2}}}{(n+a)^{n+a-\frac{1}{2}}}
=
n^{b-a}
\left(1+\frac{b}{n}\right)^n
\left(1+\frac{b}{n}\right)^{b-\frac{1}{2}}
\left(1+\frac{a}{n}\right)^{-n}
\left(1+\frac{a}{n}\right)^{-a+\frac{1}{2}},
\]
and hence, using the elementary limits
\[
\left(1+\frac{b}{n}\right)^n\to e^b,
\quad
\left(1+\frac{a}{n}\right)^n\to e^a,
\]
\[
\left(1+\frac{b}{n}\right)^{b-\frac{1}{2}}\to 1,
\quad
\left(1+\frac{a}{n}\right)^{-a+\frac{1}{2}}\to 1,
\]
we obtain
\begin{equation}\label{lim_form}
    \lim_{n\to\infty}
e^{a-b}\frac{(n+b)^{n+b-\frac{1}{2}}}{(n+a)^{n+a-\frac{1}{2}}}n^{a-b}
=1.
\end{equation}
Combining~\eqref{stirl_formula} and~\eqref{lim_form}, we deduce
\[
\lim_{n\to\infty}\frac{\Gamma(n+a)}{\Gamma(n+b)}n^{b-a}=1,
\]
as claimed.
\end{proof}

Using  Lemma~\ref{lem:gamma_ratio_asymptotic} we derive the estimate on
the Jacobi norms. Since $\alpha$ and $\beta$ are fixed, the lemma gives
\[
\lim_{j\to\infty}
\frac{\Gamma(j+\alpha+1)}
{\Gamma(j+1)(j+1)^\alpha}
=1,
\quad
\lim_{j\to\infty}
\frac{\Gamma(j+\beta+1)}
{\Gamma(j+\alpha+\beta+1)(j+1)^{-\alpha}}
=1.
\]
Consequently, we find
\[
\lim_{j\to\infty}
\frac{\Gamma(j+\alpha+1)\Gamma(j+\beta+1)}
{\Gamma(j+1)\Gamma(j+\alpha+\beta+1)}
=1.
\]
Combining the latter with~\eqref{Kj}, we obtain
\[
K_j\sim
\frac{2^{\alpha+\beta+1}}{2j+\alpha+\beta+1},
\quad j\to\infty.
\]
In particular, there exist constants
$c_1(\alpha,\beta)>0$ and $C_1(\alpha,\beta)>0$ such that
\begin{equation}\label{eq:Kj_bound}
\frac{c_1(\alpha,\beta)}{j+1}
\le
K_j
\le
\frac{C_1(\alpha,\beta)}{j+1},
\quad j\ge0.
\end{equation}

The following weighted uniform estimate for Jacobi polynomials is a key ingredient for our analysis.

\begin{lemma}\label{lem:weighted_jacobi_uniform_bound}
Assume that $\alpha,\beta\ge \frac{1}{2}$. Then there exists a constant
$C(\alpha,\beta)>0$ such that
\[
\sup_{x\in[-1,1]}
\left|P_j^{(\alpha,\beta)}(x)\omega_{\alpha,\beta}(x)\right|^2
\le
\frac{C(\alpha,\beta)}{j+1},
\quad j\ge0.
\]
\end{lemma}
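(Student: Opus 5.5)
The plan is to reduce the statement to a classical uniform weighted bound for \emph{orthonormal} Jacobi polynomials and then absorb the extra powers of the weight using $\alpha,\beta\ge\frac12$. Writing $P_j^{(\alpha,\beta)}=\sqrt{K_j}\,\widehat P_j^{(\alpha,\beta)}$ by \eqref{ortpol}, we have
\[
\left|P_j^{(\alpha,\beta)}(x)\,\omega_{\alpha,\beta}(x)\right|^2
= K_j\,\left|\widehat P_j^{(\alpha,\beta)}(x)\right|^2\,\omega_{\alpha,\beta}(x)^2 .
\]
By \eqref{eq:Kj_bound}, $K_j\le C_1(\alpha,\beta)/(j+1)$. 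It therefore suffices to show that $\sup_{x\in[-1,1]}|\widehat P_j^{(\alpha,\beta)}(x)|^2\,\omega_{\alpha,\beta}(x)^2$ is bounded uniformly in $j$.

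\textbf{Step 1: splitting the weight.} Factor
\[
\omega_{\alpha,\beta}(x)^2=(1-x)^{\alpha+\frac12}(1+x)^{\beta+\frac12}\cdot(1-x)^{\alpha-\frac12}(1+x)^{\beta-\frac12}.
\]
Since $\alpha,\beta\ge\frac12$, the second factor has nonnegative exponents. Hence it is bounded on $[-1,1]$ by $2^{\alpha+\beta-1}$. This is exactly where the hypothesis $\alpha,\beta\ge\frac12$ enters.

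\textbf{Step 2: the classical bound.} It then remains to invoke the classical Bernstein-type inequality for orthonormal Jacobi polynomials (Szeg\H{o}; Nevai--Erd\'elyi--Magnus), valid for $\alpha,\beta\ge-\frac12$:
\[
\sup_{x\in[-1,1]}(1-x)^{\alpha+\frac12}(1+x)^{\beta+\frac12}\left|\widehat P_j^{(\alpha,\beta)}(x)\right|^2\le C(\alpha,\beta),\quad j\ge0 .
\]
Combining this with Step 1 and the estimate on $K_j$ gives the claim, with constant $C(\alpha,\beta)=C_1(\alpha,\beta)\,2^{\alpha+\beta-1}\,C'(\alpha,\beta)$, where $C'(\alpha,\beta)$ denotes the constant of the classical inequality.

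\textbf{Main obstacle: justifying the classical inequality.} If a self-contained argument is preferred to a citation, I would derive it from Szeg\H{o}'s pointwise estimates, setting $x=\cos\theta$ and splitting into three regions.
\begin{itemize}
\item \emph{Bulk region} $c/j\le\theta\le\pi-c/j$. The Darboux-type bound
\[
|P_j^{(\alpha,\beta)}(\cos\theta)|\le C\,j^{-1/2}\,(\sin\tfrac\theta2)^{-\alpha-\frac12}(\cos\tfrac\theta2)^{-\beta-\frac12}
\]
gives directly $|P_j\,\omega_{\alpha,\beta}|^2\le C j^{-1}(\sin\frac\theta2)^{2\alpha-1}(\cos\frac\theta2)^{2\beta-1}\le Cj^{-1}$, again using $\alpha,\beta\ge\frac12$.
\item \emph{Endpoint region} $0\le\theta\le c/j$ (the case near $\theta=\pi$ is symmetric). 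Here $|P_j^{(\alpha,\beta)}|\le Cj^{\alpha}$ while $(1-x)^{\alpha}\le C\theta^{2\alpha}\le Cj^{-2\alpha}$. This gives $|P_j\,\omega_{\alpha,\beta}|^2\le Cj^{-2\alpha}\le Cj^{-1}$, since $2\alpha\ge1$.
\end{itemize}
Small values of $j$ are handled by finiteness, since each $P_j\,\omega_{\alpha,\beta}$ is continuous on $[-1,1]$.
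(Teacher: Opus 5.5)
Your proposal is correct and follows essentially the same route as the paper. The paper likewise writes $|P_j^{(\alpha,\beta)}\omega_{\alpha,\beta}|^2=K_j|\widehat P_j^{(\alpha,\beta)}|^2\omega_{\alpha,\beta}^2$, applies the Erd\'elyi--Magnus--Nevai bound, uses $\alpha,\beta\ge\frac12$ to bound the leftover factor $(1-x)^{\alpha-\frac12}(1+x)^{\beta-\frac12}$, and concludes with \eqref{eq:Kj_bound}. Your optional self-contained derivation of the classical inequality from Szeg\H{o}'s estimates goes beyond what the paper does, and it is not needed.
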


\begin{proof}
Let
\[
\left\{\widehat P_j^{(\alpha,\beta)}\right\}_{j\ge0}
\]
be the orthonormal Jacobi polynomials defined in~\eqref{ortpol}. By the
uniform estimate for orthonormal Jacobi polynomials proved
in~\cite{Nevai:1994:GJW}, there exists a constant
$C_0(\alpha,\beta)>0$ such that
\begin{equation*}
(1-x)^{\alpha+\frac{1}{2}}
(1+x)^{\beta+\frac{1}{2}}
\left|\widehat P_j^{(\alpha,\beta)}(x)\right|^2
\le C_0(\alpha,\beta),
\quad x\in[-1,1],\quad j\ge0.
\end{equation*}
Consequently, by~\eqref{ortpol}, we have
\begin{align}
\left|P_j^{(\alpha,\beta)}(x)\omega_{\alpha,\beta}(x)\right|^2
&=
K_j
\left|\widehat P_j^{(\alpha,\beta)}(x)\right|^2
(1-x)^{2\alpha}(1+x)^{2\beta} \notag\\
&\le
K_j C_0(\alpha,\beta)
(1-x)^{\alpha-\frac{1}{2}}
(1+x)^{\beta-\frac{1}{2}}.
\label{eq:weighted_poly_bound}
\end{align}
Since $\alpha,\beta\ge \frac{1}{2}$, the last factor is bounded on $[-1,1]$.
Thus
\[
\sup_{x\in[-1,1]}
\left|P_j^{(\alpha,\beta)}(x)\omega_{\alpha,\beta}(x)\right|^2
\le
C_1(\alpha,\beta)K_j.
\]
Using~\eqref{eq:Kj_bound} and renaming the constant, we obtain
\[
\sup_{x\in[-1,1]}
\left|P_j^{(\alpha,\beta)}(x)\omega_{\alpha,\beta}(x)\right|^2
\le
\frac{C(\alpha,\beta)}{j+1},
\quad j\ge0.
\]
\end{proof}

\subsection{The weighted histopolation problem}

Let $X_N=\left\{x_0,\dots,x_N\right\}\subset[-1,1]$ be such that
\begin{equation}\label{nodecond}
    -1=x_0<\dots<x_N=1
\end{equation}
and set
\[
s_i=\left[x_{i-1},x_{i}\right], \quad h_i=x_i-x_{i-1}, \quad i=1,\dots,N.
\]
Given $f\in L^1\left([-1,1],\omega_{\alpha,\beta}\right)$, we define the weighted cell averages
\begin{equation}
b_i
=
\frac{1}{h_i}
\int_{s_i} f(t)\omega_{\alpha,\beta}(t)dt,
\quad i=1,\dots,N.
\end{equation}
Let $\left\{\varphi_0,\dots,\varphi_{N-1}\right\}$ be a basis of
$\mathbb{P}_{N-1}([-1,1])$. The weighted histopolation problem consists in
determining a polynomial
\begin{equation}\label{pol_hist}
p_{N-1}(t)=\sum_{j=1}^N c_j\varphi_{j-1}(t)
\end{equation}
such that
\begin{equation*}
\frac{1}{h_i}\int_{s_i} p_{N-1}(t)\omega_{\alpha,\beta}(t)dt=b_i,
\quad i=1,\dots,N.
\end{equation*}
Equivalently, the vector of coefficients $\boldsymbol{c}=\left[c_1,\dots,c_N\right]^\top$ is determined by the
linear system
\begin{equation*}
H_N \boldsymbol{c}=\boldsymbol{b},
\end{equation*}
where $\boldsymbol{b}=\left[b_1,\dots,b_N\right]^\top$, and
\begin{equation}\label{eq:matHNentries}
    \left[H_N\right]_{i,j}
=
\frac{1}{h_i}
\int_{s_i}
\varphi_{j-1}(t)\omega_{\alpha,\beta}(t)dt,
\quad i,j=1,\dots,N.
\end{equation}

It is well known that, in the classical unweighted case, histopolation on a
family of subintervals with pairwise intersections of measure zero is
unisolvent; see, e.g.,~\cite{Bruno:2024:PIO,Bruno:2025:OTC}. The next result shows that the same property remains valid in the weighted setting.

\begin{theorem}
Let $X_N=\left\{x_0,\dots,x_N\right\}$ satisfy~\eqref{nodecond}, and let
\[
s_i=\left[x_{i-1},x_i\right], \quad h_i=x_i-x_{i-1}, \quad i=1,\dots,N.
\]
Let $\omega\in L^1(-1,1)$ be such that $\omega(t)>0$ a.e. on $(-1,1)$.
Then, for any data
\[
\eta_1,\dots,\eta_N\in\mathbb{R},
\]
there exists a unique
polynomial $p_{N-1}\in\mathbb{P}_{N-1}$ such that
\[
\frac{1}{h_i}\int_{s_i} p_{N-1}(t)\omega(t)dt = \eta_i,
\quad i=1,\dots,N.
\]
Equivalently, for any basis $\left\{\varphi_k\right\}_{k=0}^{N-1}$ of $\mathbb{P}_{N-1}$, the associated weighted histopolation matrix is
nonsingular.
\end{theorem}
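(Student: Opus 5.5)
Since the problem is a square linear system of size $N$, existence for all data is equivalent to uniqueness. So I will prove that the only $p\in\mathbb{P}_{N-1}$ satisfying $\int_{s_i} p(t)\omega(t)\,dt=0$ for all $i=1,\dots,N$ is $p\equiv 0$. The statement about an arbitrary basis $\{\varphi_k\}$ then follows at once. A change of basis multiplies $H_N$ on the right by an invertible matrix. Moreover, the kernel of $H_N$ corresponds exactly to coefficient vectors of polynomials with vanishing weighted cell integrals (the factors $1/h_i$ are positive and irrelevant).

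The core step is a sign-change argument. Suppose $p\neq 0$ has vanishing weighted integrals on every $s_i$. Fix $i$. If $p$ had no zero in the open interval $(x_{i-1},x_i)$, then $p$ would be continuous and nonvanishing there, hence of constant sign. Since $\omega>0$ a.e. and $\omega\in L^1$, the product $p\omega$ would be of strict constant sign a.e. on $s_i$, and it is integrable because $p$ is bounded. This would give $\int_{s_i}p\omega\neq 0$, which is a contradiction. Hence $p$ has at least one zero in each open interval $(x_{i-1},x_i)$.

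These $N$ open intervals are pairwise disjoint, so the zeros are distinct. Thus $p$ has at least $N$ distinct roots while $\deg p\le N-1$, which forces $p\equiv0$, a contradiction.

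The only point that needs care is the strict positivity of $\int_{s_i}|p|\omega$ when $p$ has constant sign. Here we use that $p$ has only finitely many zeros, so $|p|>0$ a.e. on $s_i$. Combined with $\omega>0$ a.e., this gives $|p|\omega>0$ a.e. on a set of positive measure, so the integral is strictly positive. Everything else is elementary.
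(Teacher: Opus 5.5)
Your proof is correct and follows essentially the same route as the paper. Both reduce to injectivity, show that vanishing weighted cell integrals force a zero of $p$ in each open cell $(x_{i-1},x_i)$, and conclude from having $N$ distinct zeros while $\deg p\le N-1$; your contrapositive step (no interior zero gives constant sign, hence a nonzero integral) is just a more streamlined form of the paper's case split on the minimum $m_i$ and maximum $M_i$ of $p$ on $s_i$.
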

\begin{proof}
We consider the linear map
\[
\Phi:p\in\mathbb{P}_{N-1}\mapsto
\left[
\frac{1}{h_1}\int_{s_1}p(t)\omega(t)dt,
\dots,
\frac{1}{h_N}\int_{s_N}p(t)\omega(t)dt
\right]^\top
\in\mathbb{R}^N .
\]
With respect to the basis $\left\{\varphi_k\right\}_{k=0}^{N-1}$ of
$\mathbb{P}_{N-1}$ and the canonical basis of $\mathbb{R}^N$, the matrix
representation of $\Phi$ coincides with the associated weighted histopolation
matrix. Since
\[
\dim\left(\mathbb{P}_{N-1}\right)=\dim\left(\mathbb{R}^N\right)=N,
\]
the map $\Phi$ is an isomorphism if and only if it is injective. Therefore,
to prove the theorem it is enough to prove that $\Phi$ is injective.

Let $p\in\mathbb{P}_{N-1}$ satisfy
\begin{equation}\label{nullphi}
[\Phi(p)]_i=0, \quad i=1,\dots,N .
\end{equation}
For each $i=1,\dots,N$, the continuity of $p$ on $s_i$ implies that $p$
attains its minimum and maximum on $s_i$. We denote them by
\[
m_i=\min_{t\in s_i}p(t), \quad M_i=\max_{t\in s_i}p(t).
\]
Moreover, since $\omega(t)>0$ a.e. on $(-1,1)$ and $h_i>0$, we have
\[
A_i=\frac{1}{h_i}\int_{s_i}\omega(t)dt>0 .
\]
From
\[
m_i\le p(t)\le M_i, \quad t\in s_i,
\]
and $\omega(t)>0$ a.e., we obtain
\[
m_i A_i \le [\Phi(p)]_i \le M_i A_i,
\quad i=1,\dots,N.
\]
By~\eqref{nullphi}, and since $A_i>0$, it follows that
\[
m_i\le 0\le M_i,
\quad i=1,\dots,N.
\]
If, for some $i$, either $m_i=0$ or $M_i=0$, then $p$ has a constant sign on
$s_i$. In this case, the identity
\[
\int_{s_i}p(t)\omega(t)dt=0
\]
and the positivity of $\omega$ imply that $p=0$ on $s_i$. Hence, since a
polynomial that vanishes on an interval is identically zero, we conclude that
$p=0$ on $[-1,1]$.

Otherwise, for any $i=1,\dots,N$, we have
\[
m_i<0<M_i.
\]
Thus $p$ changes sign on each interval $s_i$. By the intermediate value
theorem, for any $i=1,\dots,N$ there exists
\[
\xi_i\in\left(x_{i-1},x_i\right)
\]
such that $p(\xi_i)=0$. The points $\xi_1,\dots,\xi_N$ are distinct, and hence
$p$ has at least $N$ distinct zeros. Since $\deg(p)\le N-1$, it follows that
$p=0$.

This proves that the only element $p$ satisfying~\eqref{nullphi} is the zero
polynomial. Thus $\Phi$ is injective.
\end{proof}

For the subsequent analysis, we consider the Jacobi basis
\begin{equation}\label{jacbasiabov}
\varphi_{j-1}(t)=P^{(\alpha+1,\beta+1)}_{j-1}(t),
\quad j=1,\dots,N.
\end{equation}
Associated with this basis, we introduce the weighted primitives
\begin{equation}\label{eq:primitive}
\psi_j(x)
=
\int_{-1}^{x}
P^{(\alpha+1,\beta+1)}_{j-1}(t)
\omega_{\alpha,\beta}(t)dt, \quad j=1,\dots,N.
\end{equation}
By~\eqref{eq:jacobi-derivative}, these functions can equivalently be written as
\begin{equation}\label{eq:primitive-derivative}
\psi_j(x)
=
\frac{2}{j+\alpha+\beta+1}
\int_{-1}^x
\left(P_j^{(\alpha,\beta)}(t)\right)'
\omega_{\alpha,\beta}(t)dt,\quad j=1,\dots,N.
\end{equation}
We next introduce two discrete operators which are used to factorize the
weighted histopolation matrix. The first one is the backward difference
operator $\Delta_N\in\mathbb{R}^{N\times(N+1)}$, defined by
\begin{equation}\label{mat:DeltaN}
\left[\Delta_N \boldsymbol{u}\right]_i = \frac{u_i-u_{i-1}}{h_i}, \quad i=1,\dots,N, \quad \boldsymbol{u}=\left[u_0,\dots,u_N\right]^\top\in\mathbb{R}^{N+1}.
\end{equation}
The second one
is the primitive sampling matrix $\Psi_N\in\mathbb{R}^{(N+1)\times N}$, defined by
\begin{equation}\label{mat:PsiN}
\left[\Psi_N\right]_{k,j}
=
\psi_j\left(x_k\right), \quad k=0,\dots,N, \quad j=1,\dots,N.
\end{equation}
The next result provides an explicit expression for $\psi_j$, which is employed in the spectral analysis.

\begin{lemma}\label{lem:IBP}
Let $\alpha,\beta>0$. Then, for any $j=1,\dots,N$ and $x\in[-1,1]$, we have
\begin{equation}\label{eq:IBP-psi}
\psi_j(x)
=
\frac{2\left(
P_j^{(\alpha,\beta)}(x)\omega_{\alpha,\beta}(x)
+(\alpha-\beta)I_j(x)
+(\alpha+\beta)J_j(x)
\right)}{j+\alpha+\beta+1},
\end{equation}
where
\begin{align}
I_j(x)
&=
\int_{-1}^{x} P_j^{(\alpha,\beta)}(t)
\omega_{\alpha-1,\beta-1}(t)dt, \label{eq:Ij}\\
J_j(x)
&=
\int_{-1}^{x} tP_j^{(\alpha,\beta)}(t)
\omega_{\alpha-1,\beta-1}(t)dt. \label{eq:Jj}
\end{align}
\end{lemma}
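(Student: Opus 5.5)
Starting from the representation \eqref{eq:primitive-derivative}, we integrate by parts on $[-1,x]$ with $u=\omega_{\alpha,\beta}$ and $dv=(P_j^{(\alpha,\beta)})'\,dt$:
\[
\int_{-1}^x \left(P_j^{(\alpha,\beta)}(t)\right)'\omega_{\alpha,\beta}(t)\,dt
=
P_j^{(\alpha,\beta)}(x)\omega_{\alpha,\beta}(x)
-P_j^{(\alpha,\beta)}(-1)\omega_{\alpha,\beta}(-1)
-\int_{-1}^x P_j^{(\alpha,\beta)}(t)\,\omega_{\alpha,\beta}'(t)\,dt .
\]
The hypothesis $\alpha,\beta>0$ enters here. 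Since $\beta>0$, we have $\omega_{\alpha,\beta}(-1)=0$, so the boundary term at $-1$ vanishes. Moreover, $\omega_{\alpha,\beta}$ is absolutely continuous on $[-1,x]$, and its derivative is integrable near the endpoints because the exponents $\alpha-1,\beta-1>-1$. This justifies the integration by parts, say by performing it on $[-1+\varepsilon,x]$ and letting $\varepsilon\to0^+$ using continuity of $P_j\omega_{\alpha,\beta}$ and dominated convergence. The case $x=1$ is handled the same way, or by continuity in $x$.

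Next we compute the derivative of the weight. For $t\in(-1,1)$,
\[
\omega_{\alpha,\beta}'(t)
=-\alpha(1-t)^{\alpha-1}(1+t)^{\beta}+\beta(1-t)^{\alpha}(1+t)^{\beta-1}
=\omega_{\alpha-1,\beta-1}(t)\bigl(-\alpha(1+t)+\beta(1-t)\bigr).
\]
The bracket simplifies to $(\beta-\alpha)-(\alpha+\beta)t$, so
\[
-\omega_{\alpha,\beta}'(t)=\bigl((\alpha-\beta)+(\alpha+\beta)t\bigr)\omega_{\alpha-1,\beta-1}(t).
\]
Substituting this and splitting the integral gives exactly $(\alpha-\beta)I_j(x)+(\alpha+\beta)J_j(x)$, where $I_j$ and $J_j$ are as in \eqref{eq:Ij}--\eqref{eq:Jj}. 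Both integrals converge absolutely, since their integrands are a bounded function times $\omega_{\alpha-1,\beta-1}$, which is integrable for $\alpha,\beta>0$.

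Multiplying by the prefactor $2/(j+\alpha+\beta+1)$ from \eqref{eq:primitive-derivative} yields \eqref{eq:IBP-psi}. The only delicate point is the endpoint behaviour, namely the vanishing boundary term and the integrability of $\omega_{\alpha-1,\beta-1}$. Both require $\alpha,\beta>0$, and the $\varepsilon$-limiting argument above handles them; the rest is algebra.
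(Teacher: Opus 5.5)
Your proposal is correct and follows the paper's proof essentially verbatim: integrate by parts in \eqref{eq:primitive-derivative}, drop the boundary term at $-1$ because $\beta>0$, write $\omega_{\alpha,\beta}'(t)=\left[(\beta-\alpha)-(\alpha+\beta)t\right]\omega_{\alpha-1,\beta-1}(t)$, and split the resulting integral into $I_j$ and $J_j$. Your justification of the endpoint behaviour (integrability of $\omega_{\alpha-1,\beta-1}$ for $\alpha,\beta>0$ and the $\varepsilon\to0^+$ limit) is a useful addition that the paper leaves implicit.
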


\begin{proof}
From~\eqref{eq:primitive-derivative} and integration by parts, we find
\begin{eqnarray} \notag
&&\psi_j(x)
= \frac{2}{ j+\alpha+\beta+1}
 \int_{-1}^{x} \left(P_j^{(\alpha,\beta)}(t)\right)'\omega_{\alpha,\beta}(t)dt
\\&=& \frac{2}{ j+\alpha+\beta+1}\left(
P_j^{(\alpha,\beta)}(x)\omega_{\alpha,\beta}(x)
-
\int_{-1}^{x}
P_j^{(\alpha,\beta)}(t)
\frac{d}{dt}\omega_{\alpha,\beta}(t)dt\right), \label{eq:primitivePsi}
\end{eqnarray}
where we have used $\omega_{\alpha,\beta}(-1)=0$, since $\beta>0$.
A direct computation gives
\begin{equation*}
\frac{d}{dt}\omega_{\alpha,\beta}(t)
=
\left[(\beta-\alpha)-(\alpha+\beta)t\right]
\omega_{\alpha-1,\beta-1}(t).
\end{equation*}
Hence
\begin{eqnarray}\notag
    && \int_{-1}^{x}
P_j^{(\alpha,\beta)}(t)
\frac{d}{dt}\omega_{\alpha,\beta}(t)dt=\\ \notag
&=&  (\beta-\alpha)\int_{-1}^{x}
P_j^{(\alpha,\beta)}(t)
\omega_{\alpha-1,\beta-1}(t)dt-(\alpha+\beta)\int_{-1}^{x}t
P_j^{(\alpha,\beta)}(t)
\omega_{\alpha-1,\beta-1}(t)dt\\ \label{eq:rel_primitive_IjJj}
&=&(\beta-\alpha)I_j(x)
-
(\alpha+\beta)J_j(x),
\end{eqnarray}
where $I_j$ and $J_j$ are defined in~\eqref{eq:Ij} and~\eqref{eq:Jj}, respectively.
Substituting~\eqref{eq:rel_primitive_IjJj} into~\eqref{eq:primitivePsi} leads to \eqref{eq:IBP-psi}.
\end{proof}

The next lemma shows that the three-term recurrence relation~\eqref{eq:jacobi-recurrence}
expresses each $J_j$ as a tridiagonal combination of the weighted
primitives $I_{j+1}$, $I_j$, and $I_{j-1}$.

\begin{lemma}\label{lem:spectral-locality}
Let $\alpha,\beta>0$. Then, for any $j=1,\dots,N$ and $x\in[-1,1]$, the following equality holds
\[
J_j(x)
=
a_j I_{j+1}(x)
+
b_j I_j(x)
+
c_j I_{j-1}(x),
\]
where
\[
I_k(x)=\int_{-1}^{x} P_k^{(\alpha,\beta)}(t)
\omega_{\alpha-1,\beta-1}(t)dt,\quad k\ge0,
\]
and $a_j,b_j,c_j$ are defined in~\eqref{aj},~\eqref{bj}, and~\eqref{cj},
respectively.
\end{lemma}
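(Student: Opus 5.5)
The identity follows by integrating the three-term recurrence~\eqref{eq:jacobi-recurrence} against the weight $\omega_{\alpha-1,\beta-1}$ on $[-1,x]$ and using linearity of the integral. No integration by parts is needed here, because Lemma~\ref{lem:IBP} has already moved the derivative off the Jacobi polynomial.

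\textbf{Well-definedness.} Since $\alpha,\beta>0$, the exponents of $\omega_{\alpha-1,\beta-1}(t)=(1-t)^{\alpha-1}(1+t)^{\beta-1}$ are both $>-1$, so $\omega_{\alpha-1,\beta-1}\in L^1(-1,1)$. For every $k\ge0$, the functions
\[
t\mapsto P_k^{(\alpha,\beta)}(t)\,\omega_{\alpha-1,\beta-1}(t)
\quad\text{and}\quad
t\mapsto tP_k^{(\alpha,\beta)}(t)\,\omega_{\alpha-1,\beta-1}(t)
\]
are therefore integrable on $[-1,1]$, being bounded polynomials times an integrable weight. Hence $I_k(x)$ and $J_k(x)$ are finite for all $x\in[-1,1]$. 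This includes the index $k=N+1$, which appears when $j=N$; it lies outside the basis range but is a perfectly good Jacobi polynomial.

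\textbf{Pointwise identity.} Fix $j\in\{1,\dots,N\}$. Since $j\ge1$, the recurrence~\eqref{eq:jacobi-recurrence} applies, with $P_{j-1}^{(\alpha,\beta)}$ a genuine polynomial ($j-1\ge0$). Multiplying it by $\omega_{\alpha-1,\beta-1}(t)$ gives, for $t\in(-1,1)$,
\[
tP_j^{(\alpha,\beta)}(t)\,\omega_{\alpha-1,\beta-1}(t)
=
\left(a_jP_{j+1}^{(\alpha,\beta)}(t)+b_jP_j^{(\alpha,\beta)}(t)+c_jP_{j-1}^{(\alpha,\beta)}(t)\right)\omega_{\alpha-1,\beta-1}(t).
\]
The coefficients $a_j,b_j,c_j$ given by~\eqref{aj}--\eqref{cj} are constants independent of $t$. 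Their denominators do not vanish for $j\ge1$ and $\alpha,\beta>0$.

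\textbf{Integration.} Integrate both sides over $[-1,x]$. Each term is separately integrable by the first step, so linearity splits the right-hand side into three integrals. These are exactly $a_jI_{j+1}(x)$, $b_jI_j(x)$ and $c_jI_{j-1}(x)$. The left-hand side is $J_j(x)$ by definition~\eqref{eq:Jj}, which gives the claim.

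The only point that needs care is the integrability of $\omega_{\alpha-1,\beta-1}$ near $t=\pm1$, which is what justifies splitting the integral. This is guaranteed precisely by the hypothesis $\alpha,\beta>0$, and it is why the lemma carries that assumption.
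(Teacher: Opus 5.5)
Your proof is correct and follows the paper's argument: substitute the three-term recurrence~\eqref{eq:jacobi-recurrence} into the integrand of $J_j$ and split the integral by linearity. The paper leaves implicit your extra check that $\omega_{\alpha-1,\beta-1}\in L^1(-1,1)$ for $\alpha,\beta>0$, which justifies splitting the integral, including for the index $j+1=N+1$.
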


\begin{proof}
Substituting~\eqref{eq:jacobi-recurrence} into~\eqref{eq:Jj} and using
linearity, we obtain
\begin{eqnarray*}
J_j(x)
&=&
\int_{-1}^{x}
\left(
a_j P_{j+1}^{(\alpha,\beta)}(t)
+
b_j P_j^{(\alpha,\beta)}(t)
+
c_j P_{j-1}^{(\alpha,\beta)}(t)
\right)
\omega_{\alpha-1,\beta-1}(t)dt\\
&=&
a_j \int_{-1}^{x} P_{j+1}^{(\alpha,\beta)}(t)
\omega_{\alpha-1,\beta-1}(t)dt
+
b_j \int_{-1}^{x} P_j^{(\alpha,\beta)}(t)
\omega_{\alpha-1,\beta-1}(t)dt\\
&+&
c_j \int_{-1}^{x} P_{j-1}^{(\alpha,\beta)}(t)
\omega_{\alpha-1,\beta-1}(t)dt.
\end{eqnarray*}
The conclusion follows from the definitions of $I_{j+1}(x)$, $I_j(x)$, and
$I_{j-1}(x)$.
\end{proof}

\begin{remark}
By Lemma~\ref{lem:IBP} and Lemma~\ref{lem:spectral-locality}, for any
$j=1,\dots,N$, we obtain
\begin{equation}\label{eq:psiI_j}
\psi_j(x)
=
\widetilde r_j(x)
+
u_j I_{j+1}(x)
+
d_j I_j(x)
+
\ell_j I_{j-1}(x),
\quad x\in[-1,1],
\end{equation}
where
\[
\widetilde r_j(x)
=
\frac{2}{j+\alpha+\beta+1}
P_j^{(\alpha,\beta)}(x)\omega_{\alpha,\beta}(x),
\]
and
\begin{equation}\label{ujdjlj}
u_j=
\frac{2(\alpha+\beta)a_j}{j+\alpha+\beta+1},
\
d_j=
\frac{2\left((\alpha-\beta)+(\alpha+\beta)b_j\right)}{j+\alpha+\beta+1},
\
\ell_j=
\frac{2(\alpha+\beta)c_j}{j+\alpha+\beta+1}.
\end{equation}
\end{remark}

For the  Jacobi basis~\eqref{jacbasiabov}, we now show that the weighted histopolation
matrix admits an exact factorization in terms of the backward difference
operator and the primitive sampling matrix.
\begin{theorem}\label{thm:exact-factorization}
Let $\alpha,\beta>0$ and let $X_N=\left\{x_0,\dots,x_N\right\}$ satisfy~\eqref{nodecond}.
Then the weighted Jacobi histopolation matrix $H_N$ associated with the basis~\eqref{jacbasiabov}
admits the factorization
\[
H_N=\Delta_N\Psi_N,
\]
where $\Delta_N$ and $\Psi_N$ are defined in~\eqref{mat:DeltaN} and
\eqref{mat:PsiN}, respectively.
\end{theorem}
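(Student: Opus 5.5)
The factorization is an entrywise identity, and I would prove it by expressing each cell integral through the weighted primitives $\psi_j$ from \eqref{eq:primitive}. Fix $i\in\{1,\dots,N\}$ and $j\in\{1,\dots,N\}$.

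First I need $\psi_j$ to be well defined and continuous on $[-1,1]$. The integrand $P^{(\alpha+1,\beta+1)}_{j-1}(t)\,\omega_{\alpha,\beta}(t)$ is a polynomial times $\omega_{\alpha,\beta}$. Since $\alpha,\beta>0$, the weight $\omega_{\alpha,\beta}$ is continuous, and hence integrable, on $[-1,1]$. Therefore $\psi_j$ is absolutely continuous on $[-1,1]$ and $\psi_j(-1)=0$.

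Next, additivity of the integral over the adjacent intervals $[-1,x_{i-1}]$ and $s_i=[x_{i-1},x_i]$ gives
\[
\int_{s_i}P^{(\alpha+1,\beta+1)}_{j-1}(t)\,\omega_{\alpha,\beta}(t)\,dt=\psi_j(x_i)-\psi_j(x_{i-1}).
\]
Dividing by $h_i$ and using \eqref{eq:matHNentries} with the basis \eqref{jacbasiabov}, I get
\[
[H_N]_{i,j}=\frac{\psi_j(x_i)-\psi_j(x_{i-1})}{h_i}.
\]

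Finally, I compute the corresponding entry of the product. By \eqref{mat:DeltaN}, row $i$ of $\Delta_N$ (with columns indexed $0,\dots,N$) has the entry $1/h_i$ in column $i$, the entry $-1/h_i$ in column $i-1$, and zeros elsewhere. By \eqref{mat:PsiN}, column $j$ of $\Psi_N$ is the vector $(\psi_j(x_0),\dots,\psi_j(x_N))^\top$. Hence
\[
[\Delta_N\Psi_N]_{i,j}=\frac{\psi_j(x_i)-\psi_j(x_{i-1})}{h_i}=[H_N]_{i,j}.
\]
Since this holds for every $i$ and $j$, we conclude $H_N=\Delta_N\Psi_N$.

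None of these steps is a real obstacle. The only care needed is with indexing: the rows of $\Psi_N$ and the columns of $\Delta_N$ both start at $0$, while the columns of $H_N$ correspond to basis functions indexed $j-1$.
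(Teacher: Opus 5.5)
Your proposal is correct and follows essentially the same route as the paper: both rest on the identity $\int_{s_i}P^{(\alpha+1,\beta+1)}_{j-1}(t)\,\omega_{\alpha,\beta}(t)\,dt=\psi_j(x_i)-\psi_j(x_{i-1})$ and then read off the action of $\Delta_N$ on the columns of $\Psi_N$. The only difference is cosmetic: the paper compares $H_N\boldsymbol{c}$ and $\Delta_N\Psi_N\boldsymbol{c}$ for an arbitrary coefficient vector $\boldsymbol{c}$, whereas you compare the matrices entry by entry.
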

\begin{proof}
Let $\boldsymbol{c}=\left[c_1,\dots,c_N\right]^\top\in\mathbb{R}^N$. Consider the polynomial
\[
p_{N-1}=\sum_{j=1}^{N}c_jP_{j-1}^{(\alpha+1,\beta+1)}\in\mathbb{P}_{N-1}.
\]
Then, by~\eqref{eq:matHNentries}, we have
\begin{equation*}
\left[H_N \boldsymbol{c}\right]_i=\sum_{j=1}^N \left[H_N\right]_{i,j}c_j =\frac{1}{h_i} \sum_{j=1}^{N} c_j
\int_{s_i}
P_{j-1}^{(\alpha+1,\beta+1)}(t)
\omega_{\alpha,\beta}(t)dt,
\end{equation*}
for any $i=1,\dots,N.$
Using~\eqref{eq:primitive}, we obtain
\[
\int_{s_i}
P_{j-1}^{(\alpha+1,\beta+1)}(t)
\omega_{\alpha,\beta}(t)dt
=
\psi_j\left(x_i\right)-\psi_j\left(x_{i-1}\right), \quad i=1,\dots,N.
\]
Hence
\[
\left[H_N \boldsymbol{c}\right]_i
=
\sum_{j=1}^{N} c_j\frac{\psi_j\left(x_i\right)-\psi_j\left(x_{i-1}\right)}{h_i}, \quad i=1,\dots,N.
\]
Thus, by the definitions~\eqref{mat:DeltaN} and~\eqref{mat:PsiN}, we have
\[
\left[H_N\boldsymbol{c}\right]_i=\left[\Delta_N\Psi_N\boldsymbol{c}\right]_i, \quad i=1,\dots,N.
\]
Since $\boldsymbol{c}$ is arbitrary, it follows that
\[
H_N=\Delta_N\Psi_N.
\]
\end{proof}

We now describe the algebraic structure of the primitive sampling matrix.
To this end, we introduce the following auxiliary matrices. Let
$I_N^{\mathrm{ext}}\in\mathbb{R}^{(N+1)\times (N+2)}$ be defined by
\begin{equation}\label{INext}
\left[I_N^{\mathrm{ext}}\right]_{k+1,r+1}=I_r\left(x_k\right),
\quad k=0,\dots,N,\quad r=0,\dots,N+1,
\end{equation}
where $I_r$ is defined in~\eqref{eq:Ij}. We also define $R_N\in\mathbb{R}^{(N+1)\times N}$ by
\begin{equation}\label{matRN}
    \left[R_N\right]_{k+1,j}
=
\widetilde r_j(x_k),
\quad k=0,\dots,N,\quad j=1,\dots,N,
\end{equation}
with
\begin{equation}\label{eq:rjtilde}
    \widetilde r_j(x)=
\frac{2}{j+\alpha+\beta+1}
P_j^{(\alpha,\beta)}(x)\omega_{\alpha,\beta}(x).
\end{equation}
Finally, let $T_N^{(J)}\in\mathbb{R}^{(N+2)\times N}$ be the matrix whose only
nonzero entries are
\begin{equation}\label{matrixTNj}
    \left[T_N^{(J)}\right]_{j,j}=\ell_j,
\quad
\left[T_N^{(J)}\right]_{j+1,j}=d_j,
\quad
\left[T_N^{(J)}\right]_{j+2,j}=u_j,
\quad j=1,\dots,N,
\end{equation}
where
\[
u_j=
\frac{2(\alpha+\beta)a_j}{j+\alpha+\beta+1},
\quad
d_j=
\frac{2\left((\alpha-\beta)+(\alpha+\beta)b_j\right)}{j+\alpha+\beta+1},
\quad
\ell_j=
\frac{2(\alpha+\beta)c_j}{j+\alpha+\beta+1}.
\]

\begin{theorem}\label{thm:struct-tridiag}
Let $\alpha,\beta>0$ and let $X_N=\left\{x_0,\dots,x_N\right\}$ satisfy~\eqref{nodecond}.
Then the primitive sampling matrix $\Psi_N$, defined in~\eqref{mat:PsiN}, admits
the decomposition
\begin{equation}\label{PsiN}
\Psi_N = R_N + I_N^{\mathrm{ext}}T_N^{(J)}.
\end{equation}
\end{theorem}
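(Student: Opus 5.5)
The decomposition is an entrywise identity. I will check it column by column by evaluating the pointwise identity \eqref{eq:psiI_j} of the Remark following Lemma~\ref{lem:spectral-locality} at the nodes $x_k$. Fix $j\in\{1,\dots,N\}$ and $k\in\{0,\dots,N\}$. By \eqref{mat:PsiN}, the $(k+1,j)$ entry of $\Psi_N$ (rows indexed from $1$, as in \eqref{INext} and \eqref{matRN}) is $\psi_j(x_k)$. By \eqref{eq:psiI_j} this equals
\[
\widetilde r_j(x_k)+u_jI_{j+1}(x_k)+d_jI_j(x_k)+\ell_jI_{j-1}(x_k).
\]
The first term is exactly $[R_N]_{k+1,j}$ by \eqref{matRN} and \eqref{eq:rjtilde}. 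So it remains to show that the $(k+1,j)$ entry of $I_N^{\mathrm{ext}}T_N^{(J)}$ equals the remaining three-term combination.

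To do this I expand the product: $[I_N^{\mathrm{ext}}T_N^{(J)}]_{k+1,j}=\sum_{m=1}^{N+2}[I_N^{\mathrm{ext}}]_{k+1,m}[T_N^{(J)}]_{m,j}$. By \eqref{INext}, column $m$ of $I_N^{\mathrm{ext}}$ samples $I_{m-1}$, so $[I_N^{\mathrm{ext}}]_{k+1,m}=I_{m-1}(x_k)$. By \eqref{matrixTNj}, the only nonzero entries in column $j$ of $T_N^{(J)}$ are in rows $m=j,j+1,j+2$, with values $\ell_j,d_j,u_j$. The sum therefore collapses to
\[
\ell_jI_{j-1}(x_k)+d_jI_j(x_k)+u_jI_{j+1}(x_k),
\]
which is the required combination.

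The index bookkeeping is the only real obstacle, and I will verify it explicitly. Row $m$ must correspond to Jacobi index $m-1$, because $I_N^{\mathrm{ext}}$ has $N+2$ columns covering $I_0,\dots,I_{N+1}$. This range is exactly what is needed for $j=1$, which uses $I_0$, and for $j=N$, which uses $I_{N+1}$; hence no term is truncated at the boundary.

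I will also check that \eqref{eq:psiI_j} is legitimate for every $j$ in range, including $j=1$ where $c_1I_0$ appears. This holds because Lemmas~\ref{lem:IBP} and \ref{lem:spectral-locality} are stated for all $j=1,\dots,N$ under $\alpha,\beta>0$. That hypothesis also ensures that $\omega_{\alpha-1,\beta-1}$ is integrable, so each $I_r(x_k)$ is finite.

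Since the entries agree for all $k$ and $j$, the identity \eqref{PsiN} follows.
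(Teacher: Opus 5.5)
Your proposal is correct and takes the same approach as the paper: the paper's proof also samples the pointwise identity \eqref{eq:psiI_j} at the nodes $x_k$ and arranges the resulting relations columnwise. Your explicit expansion of $[I_N^{\mathrm{ext}}T_N^{(J)}]_{k+1,j}$ into $\ell_j I_{j-1}(x_k)+d_j I_j(x_k)+u_j I_{j+1}(x_k)$, including the check that $I_0,\dots,I_{N+1}$ covers the cases $j=1$ and $j=N$, just spells out the index bookkeeping that the paper leaves implicit.
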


\begin{proof}
By~\eqref{eq:psiI_j}, for $j=1,\dots,N$, we have
\[
\psi_j(x)
=
\widetilde r_j(x)
+
u_j I_{j+1}(x)
+
d_j I_j(x)
+
\ell_j I_{j-1}(x),
\quad x\in[-1,1].
\]
Sampling this identity at the grid points $x_k$, $k=0,\dots,N$, and arranging
the resulting relations columnwise gives
\[
\Psi_N = R_N + I_N^{\mathrm{ext}}T_N^{(J)}.
\]
\end{proof}

\section{GLT notation and basic tools}\label{sec3}

In this section we recall the notation and the basic tools from the theory of
Generalized Locally Toeplitz (GLT) sequences that are used in the spectral
analysis of the histopolation matrix sequences. We only report the material needed in the sequel, and refer the reader
to~\cite{Garoni:2018:GLT1,Garoni:2018:GLT2} for a complete treatment.

For a matrix $A\in \mathbb{C}^{m\times n}$, we denote by
\[
\sigma_1(A)\ge \sigma_2(A)\ge \cdots \ge \sigma_{\min\{m,n\}}(A)\ge0
\]
its singular values, arranged in nonincreasing order. If $A\in\mathbb C^{n\times n}$ is a square matrix, we denote by
\[
\lambda_1(A),\ldots,\lambda_n(A)
\]
its eigenvalues. For $q\in[1,\infty]$, the Schatten $q$-norm of $A$ is denoted by
$\left\|A\right\|_{S,q}$; see \cite{Bhatia:1997:MAN}. In particular, the Frobenius norm $\left\|\cdot\right\|_{F}$ is the Schatten $2$-norm that is
\[
\left\|\cdot\right\|_{F}=\left\|\cdot\right\|_{S,2}.
\]
The spectral norm of $A$, namely $\left\|A\right\|_{S,\infty}=\sigma_1(A)$, is denoted by
$\left\|A\right\|$. If $[A]_{ij}=a_{ij}$, we also use
\[
\left\|A\right\|_1
=
\max_j\sum_i \left|a_{ij}\right|,
\quad
\left\|A\right\|_{\infty}
=
\max_i\sum_j \left|a_{ij}\right|,
\]
which are the $\ell^1\rightarrow \ell^1$ and $\ell^\infty\rightarrow \ell^\infty$ induced norms, respectively. We note that all the Schatten $q$-norms are unitarily invariant while the two induced norms above are not~\cite{Bhatia:1997:MAN}.
Moreover, we denote by $C_c\left(\mathbb K\right)$ the space of continuous functions
with compact support on $\mathbb K$, where $\mathbb K=\mathbb R$ or
$\mathbb K=\mathbb C$.

\begin{definition}
\label{def:sv-ev-distribution}
Let $\left\{A_N\right\}_N$ be a matrix sequence such that
\[
A_N\in\mathbb{C}^{m_N\times n_N},
\]
and
\[
d_N:=\min\left\{m_N,n_N\right\}, \quad \lim_{N\to \infty}d_N=\infty.
\]
Let
\[
\chi:D\subset \mathbb R^k\to\mathbb C
\]
be a measurable function defined on a measurable set $D$ with
\[
0<\mu_k\left(D\right)<\infty,
\]
where $\mu_k$ denotes the Lebesgue measure on $\mathbb R^k$.

We say that $\left\{A_N\right\}_N$ has asymptotic singular value distribution
described by $\chi$, and we write
\begin{equation*}
    \left\{A_N\right\}_N\sim_\sigma \chi,
\end{equation*}
if
\[
\lim_{N\to\infty}
\frac1{d_N}
\sum_{i=1}^{d_N}
F\left(\sigma_i\left(A_N\right)\right)
=
\frac{1}{\mu_k\left(D\right)}
\int_D F\left(\left|\chi\left(\boldsymbol x\right)\right|\right)
d\boldsymbol x,
\quad
\forall F\in C_c\left(\mathbb R\right).
\]
In this case, $\chi$ is called the singular value symbol of
$\left\{A_N\right\}_N$.

If, in addition, $m_N=n_N=d_N$ for any $N$, we say that
$\left\{A_N\right\}_N$ has asymptotic eigenvalue, or spectral, distribution
described by $\chi$, and we write
\begin{equation*}
    \left\{A_N\right\}_N\sim_\lambda \chi,
\end{equation*}
if
\[
\lim_{N\to\infty}
\frac{1}{d_N}
\sum_{i=1}^{d_N}
F\left(\lambda_i\left(A_N\right)\right)
=
\frac{1}{\mu_k\left(D\right)}
\int_D F\left(\chi\left(\boldsymbol x\right)\right)d\boldsymbol x,
\quad
\forall F\in C_c\left(\mathbb C\right).
\]
In this case, $\chi$ is called the spectral symbol of
$\left\{A_N\right\}_N$.

If both distributions hold with the same function $\chi$, then we write
\[
\left\{A_N\right\}_N \sim_{\sigma,\lambda} \chi.
\]
\end{definition}

\begin{remark}\label{rearrangements}
It is worth noticing that when the distribution exists it is far from unique as discussed in \cite{Barbarino:2022:CAT,Ekstrom:2018:EAE}. Of special interest are the monotone rearrangements (nondecreasing or nonincreasing), which are used e.g. to check the adherence of the singular value symbol and the ordered plots of the singular values and to check the adherence of the spectral symbol and the ordered plots of the eigenvalues; see the numerical test in Section \ref{sec5}. We also refer to \cite[Remark 2.7]{Mazza:2019:SAA} for other practical considerations regarding these notions.
\end{remark}

\begin{definition}
    A matrix sequence $\left\{Z_N\right\}_N$ is called zero-distributed if
\[
\left\{Z_N\right\}_N\sim_\sigma 0.
\]
\end{definition}

In the following, we shall use the standard characterization of
zero-distributed sequences, see~\cite[Th.~3.2]{Garoni:2018:GLT1}.
\begin{theorem}\label{thmGaroni}
Let $\left\{Z_N\right\}_N$ be a matrix sequence, with
$Z_N\in\mathbb C^{m_N\times n_N}$ and
\[
d_N=\min\left\{m_N,n_N\right\}, \quad \lim_{N\to\infty}d_N=\infty.
\]
Then the following conditions are
equivalent:
 \begin{itemize}
     \item[1)] $\left\{Z_N\right\}_N\sim_\sigma0$.
     \item[2)] For any $\varepsilon>0$, we have
     \[
     \lim_{N\to \infty} \frac{\#\left\{j\in\{1,\dots,d_N\} \, :\, \sigma_j\left(Z_N\right)>\varepsilon\right\}}{d_N}=0.
     \]
     \item[3)] There exist two matrix sequences $\left\{R_N\right\}_N$ and
$\left\{E_N\right\}_N$ such that
\[
Z_N=R_N+E_N,
\]
and
     \[
    \lim_{N\to \infty} \frac{\operatorname{rank}\left(R_N\right)}{d_N}=0, \quad \lim_{N\to\infty} \left\|E_N\right\|=0.
     \]
 \end{itemize}
\end{theorem}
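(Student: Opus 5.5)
We prove the cycle of implications $1)\Rightarrow 2)\Rightarrow 3)\Rightarrow 1)$, working only with the singular values $\sigma_1(Z_N)\ge\dots\ge\sigma_{d_N}(Z_N)$.

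For $1)\Rightarrow 2)$, fix $\varepsilon>0$. Choose $F\in C_c(\mathbb R)$ with $0\le F\le 1$, $F(0)=1$, and $F(t)=0$ for $|t|\ge\varepsilon$. Since the symbol is $0$, the right-hand side in Definition~\ref{def:sv-ev-distribution} equals $F(0)=1$. Therefore
\[
\frac{1}{d_N}\sum_{i=1}^{d_N}F(\sigma_i(Z_N))\to 1 .
\]
Each index with $\sigma_i>\varepsilon$ contributes $0$ to the sum, and every other index contributes at most $1$. Hence the fraction of indices with $\sigma_i>\varepsilon$ is at most $1-\frac{1}{d_N}\sum_i F(\sigma_i)$, which tends to $0$.

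For $2)\Rightarrow 3)$, the main work is a diagonal choice of thresholds. By 2), for each $m\in\mathbb N$ the quantity
\[
\frac{\#\{i:\sigma_i(Z_N)>1/m\}}{d_N}
\]
tends to $0$. Choose an increasing sequence $N_m$ such that this fraction is at most $1/m$ for all $N\ge N_m$. For $N_m\le N<N_{m+1}$ set $\varepsilon_N=1/m$. Then $\varepsilon_N\to0$ and
\[
\frac{\#\{i:\sigma_i(Z_N)>\varepsilon_N\}}{d_N}\to 0 .
\]
Take the SVD $Z_N=U_N\Sigma_NV_N^*$. Let $R_N$ keep the singular values exceeding $\varepsilon_N$ and set the others to zero, and let $E_N=Z_N-R_N$. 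By construction $\operatorname{rank}(R_N)/d_N\to0$ and $\|E_N\|\le\varepsilon_N\to0$.

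For $3)\Rightarrow 1)$, we use the interlacing inequality
\[
\sigma_{i+r}(A+B)\le\sigma_i(A)+\sigma_{r+1}(B).
\]
Write $r_N=\operatorname{rank}(R_N)$. Since $\sigma_{r_N+1}(R_N)=0$, applying the inequality with $A=E_N$ and $B=R_N$ gives $\sigma_{i+r_N}(Z_N)\le\|E_N\|$ for every $i\ge1$. So all but $r_N$ singular values of $Z_N$ lie in $[0,\|E_N\|]$.

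Now fix $F\in C_c(\mathbb R)$. It is bounded by $\|F\|_\infty$ and uniformly continuous, with modulus of continuity $\omega_F$. Splitting the sum into the first $r_N$ indices and the rest gives
\[
\left|\frac{1}{d_N}\sum_{i=1}^{d_N}F(\sigma_i(Z_N))-F(0)\right|\le \frac{2r_N\|F\|_\infty}{d_N}+\omega_F(\|E_N\|).
\]
Both terms tend to $0$. Since the right-hand side of Definition~\ref{def:sv-ev-distribution} with symbol $0$ equals $F(0)$, this proves 1).

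The main obstacle is the diagonal construction in $2)\Rightarrow 3)$, where a single sequence of thresholds $\varepsilon_N\to0$ must work uniformly in $N$. The remaining steps are direct applications of the definition and of standard singular value perturbation.
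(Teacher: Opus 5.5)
Your proof is correct. In 1)$\Rightarrow$2) the test function does its job. In 2)$\Rightarrow$3) the diagonal choice of thresholds $\varepsilon_N\to0$ followed by truncating the SVD works. In 3)$\Rightarrow$1) the Weyl inequality $\sigma_{i+r}(A+B)\le\sigma_i(A)+\sigma_{r+1}(B)$, combined with the uniform continuity of $F$, also goes through as written.

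The paper gives no proof of its own; it quotes this characterization from \cite[Th.~3.2]{Garoni:2018:GLT1}. Your cycle of implications is the standard self-contained argument for it. Your rank-plus-Weyl counting in 3)$\Rightarrow$1) is also the same device the paper later uses in the proof of Proposition~\ref{prop:quant_scaled_RN_zero}.
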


Let $f\in L^1\left([-\pi,\pi]\right)$. The $N$-th Toeplitz matrix generated by
$f$ is defined as
\begin{equation}\label{toe:def}
T_N\left(f\right)
=
\left[\widehat f_{i-j}\right]_{i,j=1}^N
\in\mathbb C^{N\times N},
\end{equation}
where
\begin{equation}\label{fou:def}
\widehat f_k=
\frac1{2\pi}
\int_{-\pi}^{\pi}
f\left(\theta\right)e^{-ik\theta}d\theta,
\quad k\in\mathbb Z.
\end{equation}
The function $f$ is called the generating function of the Toeplitz sequence
$\left\{T_N\left(f\right)\right\}_N$. In particular, if
\begin{equation}\label{toe:def-pol}
p(\theta)=\sum_{r=r_1}^{r_2}p_r e^{ir\theta},
\quad r_1,r_2\in\mathbb Z,
\end{equation}
is a Laurent polynomial in the variable $e^{i\theta}$, then $T_N(p)$ is banded.
Indeed,
\begin{equation}\label{fou:def-pol}
\widehat p_k=
\begin{cases}
p_k, & k\in\{r_1,\ldots,r_2\},\\
0, & k\notin\{r_1,\ldots,r_2\}.
\end{cases}
\end{equation}
With this convention, the elementary symbol $e^{ir\theta}$ gives the Toeplitz
shift supported on the diagonal $i-j=r$.

Let $a:[0,1]\to\mathbb C$ be a Riemann-integrable function. We define the
diagonal sampling matrix
\[
D_N(a)
=
\operatorname{diag}\left[
a\left(\frac{1}{N}\right),\ldots,
a\left(\frac{N}{N}\right)
\right].
\]
The sequence $\left\{D_N(a)\right\}_N$ is called the diagonal sampling sequence
associated with $a$. We shall also use the standard extension of diagonal sampling sequences to
measurable functions that are finite a.e. on $[0,1]$. Thus, for instance, the
sequence
\[
\operatorname{diag}\left(\frac{N}{1},\frac{N}{2},\ldots,\frac{N}{N}\right)
\]
is interpreted as a diagonal sampling sequence with symbol $y^{-1}$ on
$(0,1]$. The behaviour at $y=0$ is irrelevant in the GLT setting, since GLT symbols belong to the $*$-algebra of measurable functions over $[0,1]\times [-\pi,\pi]$ and hence they are identified up to equality a.e.

\subsection{GLT sequences}

We now recall the GLT results needed for the asymptotic analysis. Since the
matrices considered in this paper arise from one-dimensional discretizations,
we work in the scalar unilevel setting and all the involved matrices have size $N\times N$. In this framework, a GLT sequence is a
matrix sequence $\left\{A_N\right\}_N$ associated with a measurable function
\[
\kappa:[0,1]\times[-\pi,\pi]\to\mathbb C,
\]
called its GLT symbol. We write
\[
\left\{A_N\right\}_N\sim_{\mathrm{GLT}}\kappa(y,\theta)
\]
to indicate that $\left\{A_N\right\}_N$ is a GLT sequence with symbol $\kappa$.
The symbol is uniquely determined up to equality a.e. on
$[0,1]\times[-\pi,\pi]$. The variable $y\in[0,1]$ records the dependence on the normalized index $i/N$, as in
diagonal sampling sequences, while $\theta\in[-\pi,\pi]$ is the Fourier variable
associated with Toeplitz sequences.

The GLT class can be described as the smallest class of matrix sequences which
contains Toeplitz sequences, diagonal sampling sequences, and zero-distributed
sequences, and which is closed under algebraic operations and under limits in the sense of
approximating classes of sequences (a.c.s.).

We write
\[
\left\{B_{N,m}\right\}_N
\xrightarrow{\mathrm{a.c.s.}}
\left\{A_N\right\}_N, \ \ m\rightarrow \infty,
\]
if, for every $m$, and for any sufficiently large $N$, there exist matrices
$R_{N,m}$ and $E_{N,m}$ such that
\[
A_N=B_{N,m}+R_{N,m}+E_{N,m},
\]
with
\[
\operatorname{rank}\left(R_{N,m}\right)\le c_m N,
\quad
\left\|E_{N,m}\right\|\le s_m,
\]
where
\[
\lim_{m\to\infty}c_m=0,
\quad
\lim_{m\to\infty}s_m=0.
\]
In other words, $\left\{B_{N,m}\right\}_N$ approximates
$\left\{A_N\right\}_N$ up to a perturbation by matrices of asymptotically
negligible rank and a perturbation by matrices with vanishing spectral norm.

\begin{remark}\label{link-appr-theory}
It is worth observing that the a.c.s. convergence is associated to a rigorous topology for matrix sequences. The following two facts are the basic bricks:
\begin{description}
\item[A1)] $\left\{B_{N,m}\right\}_N \xrightarrow{\mathrm{a.c.s.}}
\left\{A_N\right\}_N$, $\left\{B_{N,m}\right\}_N \sim_\sigma \psi_m$,
$\psi_m\rightarrow \phi$ a.e. imply
$\left\{A_N\right\}_N \sim_\sigma \phi$;
\item[A2)] $\left\{B_{N,m}\right\}_N \xrightarrow{\mathrm{a.c.s.}}
\left\{A_N\right\}_N$, $\left\{B_{N,m}\right\}_N \sim_\lambda \psi_m$,
$\psi_m\rightarrow \phi$ a.e., and all the involved matrices are Hermitian
for all sufficiently large $N$, imply
$\left\{A_N\right\}_N \sim_\lambda \phi$.
\end{description}
We notice that items A1) and A2) identify the foundation of an approximation theory for spectral and singular value distributions of matrix sequences; in regard to A2) see also the last part of axiom \textbf{GLT 5}. In this respect, the whole GLT analysis is built on these approximation tools.
\end{remark}

The GLT class satisfies several algebraic and topological properties, which
are treated in detail in~\cite{Serra:2001:DRO,Serra:2003:GLT,Serra:2006:TGC,Tilli:1998:LTS,Garoni:2018:GLT1,Garoni:2018:GLT2,Barbarino:2020:BGL2,Barbarino:2020:BGL1}.
In this paper, we only need the scalar unilevel setting, and we therefore
recall below the main operational rules used in the sequel. In this setting,
these rules are equivalent to the constructive definition of GLT sequences
given in~\cite{Serra:2003:GLT}.

\begin{enumerate}

\item[\textbf{GLT 1}] If
\[
\left\{A_N\right\}_N\sim_{\mathrm{GLT}}\kappa,
\]
then
\[
\left\{A_N\right\}_N\sim_\sigma \kappa
\]
in the sense of Definition~\ref{def:sv-ev-distribution}, with $k=2$ and
\[
D=[0,1]\times[-\pi,\pi].
\]
If, in addition, every $A_N$ is Hermitian, then
\[
\left\{A_N\right\}_N\sim_\lambda \kappa.
\]

\item[\textbf{GLT 2}] The basic generators satisfy
\[
\left\{T_N\left(f\right)\right\}_N
\sim_{\mathrm{GLT}} f\left(\theta\right),
\quad
\left\{D_N\left(a\right)\right\}_N
\sim_{\mathrm{GLT}} a\left(y\right),
\]
for any $f\in L^1\left([-\pi,\pi]\right)$ and any Riemann integrable function $a$ defined on $[0,1]$. Moreover
\[
\left\{Z_N\right\}_N\sim_{\mathrm{GLT}}0
\quad\Longleftrightarrow\quad
\left\{Z_N\right\}_N\sim_\sigma 0.
\]

\item[\textbf{GLT 3}] If
\[
\left\{A_N\right\}_N\sim_{\mathrm{GLT}}\kappa,
\quad
\left\{B_N\right\}_N\sim_{\mathrm{GLT}}\varsigma,
\]
then
\begin{itemize}
    \item \[\left\{A_N^*\right\}_N\sim_{\mathrm{GLT}}\overline{\kappa},\]
    \item
    \[
\left\{\alpha A_N+\beta B_N\right\}_N
\sim_{\mathrm{GLT}}
\alpha\kappa+\beta\varsigma, \quad \alpha,\beta\in\mathbb C,
    \]
\item
\[
\left\{A_NB_N\right\}_N
\sim_{\mathrm{GLT}}
\kappa\varsigma.
\]
\end{itemize}
Moreover, if $\kappa$ is nonzero a.e., then
\begin{itemize}
    \item
\[
\left\{A_N^\dagger\right\}_N
\sim_{\mathrm{GLT}}
\kappa^{-1},
\]
where $A_N^\dagger$ denotes the Moore--Penrose pseudo-inverse of $A_N$.
\end{itemize}

\item[\textbf{GLT 4}] Let
\[
\left\{B_{N,m}\right\}_N\sim_{\mathrm{GLT}}\kappa_m
\]
for every $m$. If
\[
\left\{B_{N,m}\right\}_N
\xrightarrow{\mathrm{a.c.s.}}
\left\{A_N\right\}_N
\]
and
\[
\kappa_m\to\kappa
\]
in measure on $[0,1]\times[-\pi,\pi]$, then
\[
\left\{A_N\right\}_N\sim_{\mathrm{GLT}}\kappa.
\]

\item[\textbf{GLT 5}] Suppose that
\[
\left\{A_N\right\}_N\sim_{\mathrm{GLT}}\kappa
\]
and that
\[
A_N=X_N+Y_N,
\]
where every $X_N$ is Hermitian,
\[
\left\|X_N\right\|\le C,
\quad
\left\|Y_N\right\|\le C,
\]
for a constant $C$ independent of $N$, and
\[
\lim_{N\to \infty}\frac{\left\|Y_N\right\|_{S,1}}{N}=0.
\]
Then
\[
\left\{A_N\right\}_N\sim_\lambda \kappa.
\]
Sequences satisfying these assumptions are called quasi-Hermitian.

\item[\textbf{GLT 6}] If
\[
\left\{A_N\right\}_N\sim_{\mathrm{GLT}}\kappa
\]
and every $A_N$ is Hermitian, then, for every continuous function
$f:\mathbb C\to\mathbb C$,
\[
\left\{f\left(A_N\right)\right\}_N
\sim_{\mathrm{GLT}}
f\left(\kappa\right).
\]
\end{enumerate}
As an immediate consequence, zero-distributed perturbations do not change the
singular value symbol of a GLT sequence. Indeed, if
\[
\left\{A_N\right\}_N\sim_{\mathrm{GLT}}\kappa
\quad\text{and}\quad
\left\{Z_N\right\}_N\sim_\sigma 0,
\]
then
\[
\left\{A_N+Z_N\right\}_N\sim_\sigma \kappa.
\]

Further tools that we use in the subsequent derivations are the notions of sparsely vanishing and sparsely unbounded matrix sequences.

\begin{definition}\label{s.u.}
A matrix sequence $\{A_N\}$ is said to be sparsely unbounded, if for every $M>0$ there exists $n_M$ such that, for $N\ge n_M$,
$$\frac{\#\{i\in\{1,\ldots,N\}:\ \sigma_i(A_N)>M\}}{N}\le r(M),$$
where $\lim_{M\to\infty}r(M)=0$.
\end{definition}

\begin{definition}\label{s.v.}
A matrix sequence $\{A_N\}$ is said to be sparsely vanishing if  for every $M>0$ there exists $n_M$ such that, for $N\ge n_M$,
$$\frac{\#\{i\in\{1,\ldots,N\}:\ \sigma_i(A_N)<1/M\}}{N}\le r(M),$$
where $\lim_{M\to\infty}r(M)=0$.
\end{definition}
It is clear that if $\{A_N\}$ is sparsely vanishing then $\{A_N^\dag\}$ is sparsely unbounded; it suffices to recall that the singular values of the pseudo-inverse $A^\dag$ are $1/\sigma_1(A),\ldots,1/\sigma_r(A),0,\ldots,0$, where $\sigma_1(A),\ldots,\sigma_r(A)$ are the nonzero singular values of $A$ ($r={\rm rank}(A)$).

We observe that a matrix sequence having a symbol in the sense of the singular values is always sparsely unbounded and the GLT $*$-algebra is made by sparsely unbounded matrix sequences. Furthermore, if $\left\{A_N\right\}_N\sim_\sigma \chi$ in the sense of
Definition~\ref{def:sv-ev-distribution}, then it is easy to check that
$\left\{A_N\right\}_N$ is sparsely vanishing if and only if $\chi$ vanishes at
most on a set of zero Lebesgue measure.

The following property is also of interest in our context; see \cite{Serra:2003:AOP,Benedusi:2026:AOE}.

\begin{theorem}\label{th:s.u. *-algebra}

   Let ${\mathcal A}=\left\{\{A_N\}_N, A_N\in {\mathcal M}_{N}\right\}$ be the $*$-algebra of matrix sequences and let
\[
{\mathcal SU}=\left\{\{A_N\}_N, A_N\in {\mathcal M}_{N},\ \{A_N\}_N\ s.u. \right\}.
\]
Then ${\mathcal SU}\subset {\mathcal A}$ is a $*$-algebra as well.

In other words, if $\{A_N\},\{A_N'\}$ are sparsely unbounded and $\alpha, \beta$ are given complex constants then
\begin{enumerate}
\item $\{A_N A_N'\}$ is sparsely unbounded,
\item $\{\alpha A_N + \beta A_N'\}$ is sparsely unbounded.
\end{enumerate}
\end{theorem}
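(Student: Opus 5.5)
The key tool is the characterization of sparse unboundedness through tail counts of singular values, combined with the standard interlacing inequalities
\[
\sigma_{i+j-1}(A+B)\le \sigma_i(A)+\sigma_j(B),\qquad
\sigma_{i+j-1}(AB)\le \sigma_i(A)\,\sigma_j(B),
\]
valid for $N\times N$ matrices whenever $i+j-1\le N$ (see \cite{Bhatia:1997:MAN}). The $*$-algebra structure needs little separate work. The class $\mathcal{A}$ of all sequences of $N\times N$ matrices is trivially a $*$-algebra. Singular values are invariant under conjugate transposition, so $\{A_N^*\}_N$ is sparsely unbounded whenever $\{A_N\}_N$ is. What remains is closure under products and linear combinations.

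For scalar multiples, $\sigma_i(\alpha A_N)=|\alpha|\sigma_i(A_N)$. Hence, for $\alpha\neq0$,
\[
\#\{i:\sigma_i(\alpha A_N)>M\}=\#\{i:\sigma_i(A_N)>M/|\alpha|\},
\]
and this count is at most $N\,r(M/|\alpha|)$ for $N\ge n_{M/|\alpha|}$. Setting $r'(M)=r(M/|\alpha|)$, we still have $r'(M)\to0$. The case $\alpha=0$ is trivial. It is therefore enough to treat the sum $A_N+A_N'$ and the product $A_NA_N'$, with $r,r'$ and $n_M,n'_M$ denoting the functions and thresholds of the two sequences.

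\emph{Sum.} Fix $M>0$. Let $k=\#\{i:\sigma_i(A_N)>M/2\}$ and $k'=\#\{i:\sigma_i(A_N')>M/2\}$. Then $\sigma_{k+1}(A_N)\le M/2$ and $\sigma_{k'+1}(A_N')\le M/2$. If $k+k'+1\le N$, the additive inequality with $i=k+1$, $j=k'+1$ gives $\sigma_{k+k'+1}(A_N+A_N')\le M$. Hence
\[
\#\{i:\sigma_i(A_N+A_N')>M\}\le k+k'\le N\bigl(r(M/2)+r'(M/2)\bigr)
\]
for $N\ge\max\{n_{M/2},n'_{M/2}\}$. If $k+k'\ge N$, the count is at most $N\le k+k'$, so the same bound holds. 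The new function $r(M/2)+r'(M/2)$ tends to $0$.

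\emph{Product.} The argument is the same with $\sqrt M$ in place of $M/2$. Let $k,k'$ count the singular values of $A_N,A_N'$ exceeding $\sqrt M$. The multiplicative inequality gives $\sigma_{k+k'+1}(A_NA_N')\le\sqrt M\cdot\sqrt M=M$. Therefore
\[
\#\{i:\sigma_i(A_NA_N')>M\}\le N\bigl(r(\sqrt M)+r'(\sqrt M)\bigr),
\]
and this bound again vanishes as $M\to\infty$.

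The only delicate point is stating the interlacing inequalities correctly, including the index shift $i+j-1$, and handling the edge case $k+k'\ge N$. Once these are in place, the proof is a counting argument.
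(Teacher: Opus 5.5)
Your proof is correct and complete. The additive and multiplicative singular value inequalities with the shift $i+j-1$ give the count bound $k+k'$. The edge case $k+k'\ge N$ is handled, and the new functions $r(M/|\alpha|)$, $r(M/2)+r'(M/2)$ and $r(\sqrt M)+r'(\sqrt M)$ all tend to zero as $M\to\infty$.

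The paper does not prove this theorem; it only cites \cite{Serra:2003:AOP,Benedusi:2026:AOE}. The usual argument in the GLT literature goes through an intermediate characterization. A sequence $\{A_N\}_N$ is sparsely unbounded if and only if, for every $M>0$ and all large $N$, one can write $A_N=\widehat A_{N,M}+\widetilde A_{N,M}$ with $\|\widehat A_{N,M}\|\le M$ and $\operatorname{rank}(\widetilde A_{N,M})\le r(M)N$. This splitting is obtained by cutting the SVD at level $M$.

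Closure then follows by expanding $(\widehat A_{N,M}+\widetilde A_{N,M})(\widehat A'_{N,M}+\widetilde A'_{N,M})$. The bounded part has norm at most $M^2$ ($2M$ for the sum), and the remaining terms have total rank at most $(r(M)+r'(M))N$.

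Your route skips this lemma by applying the Weyl/Horn inequalities directly, so it is shorter. Passing from the splitting back to a singular value count is itself one use of the Weyl inequality, so both proofs rest on the same estimate, organized differently. The splitting version has the advantage of using the same ``bounded norm plus small rank'' language as the a.c.s.\ notion and item 3) of Theorem~\ref{thmGaroni}. That makes facts such as the ideal property of zero-distributed sequences inside $\mathcal{SU}$ (Remark~\ref{two-sided ideal}) immediate. Your direct counting is the same device the paper uses in the proof of Proposition~\ref{prop:quant_scaled_RN_zero}, so it fits the paper's own style.
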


\section{Spectral distribution and stability}
\label{sec4}
In this section we analyze the weighted histopolation problem introduced above
from the GLT perspective and study the stability of the associated discrete
operator.

\subsection{Negligible terms and GLT symbols}

We first identify the terms that are negligible in the sense of singular value
distribution, and then compute the symbols of the relevant matrix factors.
This leads to the singular value description of the histopolation matrices and
to stability estimates for the associated discrete operators. We start with the
residual term $R_N$ defined in~\eqref{matRN}.

\begin{proposition}
\label{prop:quant_scaled_RN_zero}
Let $\gamma<1$. Assume that $\alpha,\beta\ge \frac{1}{2}$. Then
$\left\{N^\gamma R_N\right\}_N$ is a zero-distributed sequence, namely
\[
\left\{N^\gamma R_N\right\}_N\sim_{\sigma}0 .
\]
\end{proposition}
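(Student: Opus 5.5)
The plan is to show that the Frobenius norm of $N^\gamma R_N$, normalized by $N$, tends to zero, and then apply item 2) (or 3)) of Theorem~\ref{thmGaroni}. By~\eqref{matRN} and~\eqref{eq:rjtilde}, each entry satisfies
\[
\left|[R_N]_{k+1,j}\right|
=
\frac{2}{j+\alpha+\beta+1}\left|P_j^{(\alpha,\beta)}(x_k)\omega_{\alpha,\beta}(x_k)\right|
\le
\frac{2\sqrt{C(\alpha,\beta)}}{(j+\alpha+\beta+1)\sqrt{j+1}},
\]
where we use Lemma~\ref{lem:weighted_jacobi_uniform_bound}, valid since $\alpha,\beta\ge\frac12$. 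The bound is uniform in $k$ and holds for arbitrary partitions. Hence $|[R_N]_{k+1,j}|\le C' (j+1)^{-3/2}$ for a constant $C'$ depending only on $\alpha,\beta$.

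Next I would bound the Frobenius norm:
\[
\left\|R_N\right\|_F^2=\sum_{k=0}^N\sum_{j=1}^N \left|[R_N]_{k+1,j}\right|^2
\le (N+1)\,C'^2\sum_{j=1}^{N}(j+1)^{-3}\le C''(N+1).
\]
The series converges, so $\|N^\gamma R_N\|_F^2\le C'' N^{2\gamma}(N+1)$.

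To pass to the singular values, fix $\varepsilon>0$ and note that $d_N=N$ (the matrix is $(N+1)\times N$). Then
\[
\#\{j:\sigma_j(N^\gamma R_N)>\varepsilon\}\le \varepsilon^{-2}\|N^\gamma R_N\|_F^2\le \varepsilon^{-2}C''N^{2\gamma}(N+1).
\]
Dividing by $N$ gives a bound of order $N^{2\gamma}$. This vanishes only if $\gamma<0$, so the crude Frobenius bound is not sharp enough for $0\le\gamma<1$. This is the main obstacle.

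To overcome it, I would exploit the column decay via the splitting in condition 3). Fix a cutoff $m_N$ and write $N^\gamma R_N=R^{(1)}_N+E_N$, where $R^{(1)}_N$ keeps the columns $j\le m_N$ and $E_N$ keeps the rest. Then $\operatorname{rank}(R^{(1)}_N)\le m_N$. For the spectral norm I would bound
\[
\|E_N\|^2\le\|E_N\|_F^2\le N^{2\gamma}(N+1)C'^2\sum_{j>m_N}(j+1)^{-3}\lesssim N^{2\gamma+1}m_N^{-2}.
\]
Choosing $m_N=N^{\delta}$ with $\gamma+\tfrac12<\delta<1$ gives $\operatorname{rank}/N\to0$ and $\|E_N\|\to0$. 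This requires $\gamma<\frac12$. For $\frac12\le\gamma<1$ the Frobenius estimate of the tail is wasteful, since all rows are counted. In that range I would instead estimate $\|E_N\|\le\sqrt{\|E_N\|_1\|E_N\|_\infty}$. Here $\|E_N\|_1\le N^\gamma(N+1)C' m_N^{-3/2}$ and $\|E_N\|_\infty\le N^\gamma C'\sum_{j>m_N}(j+1)^{-3/2}\lesssim N^\gamma m_N^{-1/2}$. This gives $\|E_N\|\lesssim N^{\gamma+1/2}m_N^{-1}$, which is the same rate as before.

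If that still falls short for $\gamma$ close to $1$, I would refine the pointwise bound. I would use the sharper inequality~\eqref{eq:weighted_poly_bound}, which carries the extra factor $(1-x)^{\alpha-1/2}(1+x)^{\beta-1/2}$, combined with the Nevai-type bound on orthonormal polynomials, to gain decay. Alternatively, I would count singular values above $\varepsilon$ through $\|N^\gamma R_N\|_{S,1}$ rather than the Frobenius norm. The delicate point to verify is exactly how much decay in $j$ the entries possess relative to $N^{\gamma}$. I expect the final choice to be a cutoff $m_N=o(N)$ combined with a tail estimate exploiting the $(j+1)^{-3/2}$ decay.
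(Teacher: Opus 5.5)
\textbf{There is a genuine gap: your argument only covers $\gamma<\frac12$.} For $\gamma<\frac12$ the column split with $m_N=N^{\delta}$, $\gamma+\frac12<\delta<1$, gives a valid decomposition as in item 3) of Theorem~\ref{thmGaroni}. For $\frac12\le\gamma<1$ the proposal does not close, and the fallbacks you list do not repair it:
\begin{itemize}
\item The factor $(1-x)^{\alpha-\frac12}(1+x)^{\beta-\frac12}$ in \eqref{eq:weighted_poly_bound} is merely bounded when $\alpha,\beta\ge\frac12$. It is exactly what Lemma~\ref{lem:weighted_jacobi_uniform_bound} already uses, and it gives no extra decay in $j$.
\item A trace-norm count on the whole matrix, $\|N^\gamma R_N\|_{S,1}\le\sum_{j}\|N^\gamma r_j^{(N)}\|_2\lesssim N^{\gamma+1/2}$ (with $r_j^{(N)}$ the $j$-th column of $R_N$), again makes the proportion vanish only for $\gamma<\frac12$.
\end{itemize}

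\textbf{The missing idea.} The tail block does not need small spectral norm; it only needs $o(N)$ singular values above $\varepsilon$. Combine your two steps. Let $F_N^{(1)}$ be the first $m_N$ columns of $N^\gamma R_N$ and $F_N^{(2)}$ the rest. Set $r=\operatorname{rank}(F_N^{(1)})\le m_N$ and $s=\#\{k:\sigma_k(F_N^{(2)})>\varepsilon\}$. Weyl's inequality gives $\sigma_{r+s+1}(F_N^{(1)}+F_N^{(2)})\le\sigma_{r+1}(F_N^{(1)})+\sigma_{s+1}(F_N^{(2)})\le\varepsilon$. Hence $\#\{k:\sigma_k(N^\gamma R_N)>\varepsilon\}\le r+s$. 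Now apply the Chebyshev-type count you already used, but only to the tail:
\[
s\le\varepsilon^{-2}\|F_N^{(2)}\|_F^2\lesssim\varepsilon^{-2}N^{2\gamma+1}m_N^{-2}.
\]
The proportion is therefore $O(m_N/N)+O(N^{2\gamma}m_N^{-2})$. Choosing $m_N=\lceil N^\eta\rceil$ with $\max\{0,\gamma\}<\eta<1$ makes both terms vanish for every $\gamma<1$, and item 2) of Theorem~\ref{thmGaroni} concludes. This is precisely the paper's proof.

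\textbf{Where the loss comes from.} Insisting on item 3) and bounding $\|F_N^{(2)}\|$ by $\|F_N^{(2)}\|_F$ forces $\|F_N^{(2)}\|_F^2\to0$. The counting argument only needs $\|F_N^{(2)}\|_F^2=o(N)$, and that factor $N$ is exactly what separates $\gamma<\frac12$ from $\gamma<1$.

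Your trace-norm idea would also succeed if applied only to the tail columns, since $\sum_{j>m_N}\|N^\gamma r_j^{(N)}\|_2\lesssim N^{\gamma+1/2}m_N^{-1/2}=o(N)$. This holds for $m_N=\lceil N^\eta\rceil$ with $\max\{0,2\gamma-1\}<\eta<1$, using $\#\{k:\sigma_k(F_N^{(2)})>\varepsilon\}\le\varepsilon^{-1}\|F_N^{(2)}\|_{S,1}$.
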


\begin{proof}
Fix $\eta$ such that
\[
\max\left\{0,\gamma\right\}<\eta<1,
\]
and set
\begin{equation}\label{newnewmn}
    m_N=\left\lceil N^\eta\right\rceil .
\end{equation}
We decompose
\begin{equation}\label{Ngamm}
    N^\gamma R_N=F_N^{(1)}+F_N^{(2)},
\end{equation}
where $F_N^{(1)}$ contains the first $m_N$ columns of $N^\gamma R_N$ and zeros
elsewhere, while $F_N^{(2)}$ contains the remaining columns. Then
\[
\operatorname{rank}\left(F_N^{(1)}\right)\le m_N.
\]
Let $\varepsilon>0$. We denote by
\[
r=\operatorname{rank}\left(F_N^{(1)}\right),
\quad
s=\#\left\{k=1,\dots,N:\sigma_k\left(F_N^{(2)}\right)>\varepsilon\right\}.
\]
We first show that
\begin{equation}\label{eq:rank_pert_count}
\#\left\{k=1,\dots,N:\sigma_k\left(N^\gamma R_N\right)>\varepsilon\right\}
\le r + s.
\end{equation}
If $r+s\ge N$, then~\eqref{eq:rank_pert_count} is immediate. Hence, we assume
that $r+s<N$. By the Weyl inequality for singular values~\cite{Carl:2009:OWI}, we have
\begin{equation}\label{AN+BN}
\sigma_{r+s+1}\left(F_N^{(1)}+F_N^{(2)}\right)
\le
\sigma_{r+1}\left(F_N^{(1)}\right)
+
\sigma_{s+1}\left(F_N^{(2)}\right).
\end{equation}
Since $\operatorname{rank}\left(F_N^{(1)}\right)=r$, we have
\[
\sigma_{r+1}\left(F_N^{(1)}\right)=0.
\]
Moreover, by the definition of $s$, we have
\[
\sigma_{s+1}\left(F_N^{(2)}\right)\le \varepsilon.
\]
Therefore, by~\eqref{Ngamm} and~\eqref{AN+BN}, we obtain
\[
\sigma_{r+s+1}\left(N^\gamma R_N\right)
=
\sigma_{r+s+1}\left(F_N^{(1)}+F_N^{(2)}\right)
\le \varepsilon.
\]
Hence $N^\gamma R_N$ has at most $r+s$ singular values greater than
$\varepsilon$, and~\eqref{eq:rank_pert_count} follows.

Since
\[
\left\|F_N^{(2)}\right\|_F^2
=
\sum_{k=1}^{N}\sigma_k^2\left(F_N^{(2)}\right),
\]
we have
\[
\left\|F_N^{(2)}\right\|_F^2
\ge
\varepsilon^2
\#\left\{k=1,\dots,N:\sigma_k\left(F_N^{(2)}\right)>\varepsilon\right\}.
\]
Consequently,
\begin{equation}\label{est2}
s=\#\left\{k=1,\dots,N:\sigma_k\left(F_N^{(2)}\right)>\varepsilon\right\}
\le
\frac{\left\|F_N^{(2)}\right\|_F^2}{\varepsilon^2}.
\end{equation}
On the other hand, since $F_N^{(2)}$ contains precisely the columns
$m_N+1,\ldots,N$ of $N^\gamma R_N$, we obtain
\[
\left\|F_N^{(2)}\right\|_F^2
=
N^{2\gamma}
\sum_{j=m_N+1}^{N}
\left\|r_j^{(N)}\right\|_2^2 ,
\]
where the $j$-th column of $R_N$ is
\[
r_j^{(N)}
=
\left[
\widetilde r_j\left(x_0\right),
\widetilde r_j\left(x_1\right),
\dots,
\widetilde r_j\left(x_N\right)
\right]^\top\in\mathbb{R}^{N+1}.
\]
By~\eqref{eq:rjtilde} and Lemma~\ref{lem:weighted_jacobi_uniform_bound}, we have
\[
\sup_{x\in[-1,1]}
\left|\widetilde r_j(x)\right|^2
=
\frac{4}{(j+\alpha+\beta+1)^2}
\sup_{x\in[-1,1]}
\left|P_j^{(\alpha,\beta)}(x)\omega_{\alpha,\beta}(x)\right|^2
\le
\frac{C(\alpha,\beta)}{(j+1)^3}.
\]
It follows that
\begin{equation*}
\left\|r_j^{(N)}\right\|_2^2
=
\sum_{k=0}^{N}
\left|\widetilde r_j\left(x_k\right)\right|^2
\le
C(\alpha,\beta)\frac{N+1}{(j+1)^3}.
\end{equation*}
Therefore,
\begin{equation}\label{new43}
\left\|F_N^{(2)}\right\|_F^2
\le
C(\alpha,\beta)N^{2\gamma}(N+1)
\sum_{j=m_N+1}^{\infty}\frac{1}{(j+1)^3}.
\end{equation}
Using the fact that
\[
(j+1)^2\ge j(j+2),
\]
we have
\[
\frac{1}{(j+1)^3} \le \frac{1}{j(j+1)(j+2)}, \quad j\ge 1.
\]
Then
\begin{eqnarray*}
&&\sum_{j=m_N+1}^{\infty}\frac{1}{(j+1)^3}\le \sum_{j=m_N+1}^{\infty}\frac{1}{j(j+1)(j+2)}\\&=&\frac{1}{2}\sum_{j=m_N+1}^{\infty}\left(\frac{1}{j(j+1)}-\frac{1}{(j+1)(j+2)}\right)=\frac{1}{2(m_N+1)(m_N+2)}.
\end{eqnarray*}
Therefore, by~\eqref{new43} and~\eqref{newnewmn}, we have
\[
\left\|F_N^{(2)}\right\|_F^2
\le
C(\alpha,\beta)
\frac{N^{2\gamma}(N+1)}
{\left(m_N+1\right)\left(m_N+2\right)}
=
 O\left(N^{2\gamma+1-2\eta}\right).
\]
Combining this estimate with~\eqref{eq:rank_pert_count} and~\eqref{est2}, we get
\begin{eqnarray*}
&& \#\left\{k=1,\dots,N:\sigma_k\left(N^\gamma R_N\right)>\varepsilon\right\}\le\\
&& \operatorname{rank}\left(F_N^{(1)}\right)+\#\left\{k=1,\dots,N:\sigma_k\left(F_N^{(2)}\right)>\varepsilon\right\}\le\\
&&m_N
+
\frac{\left\|F_N^{(2)}\right\|_F^2}{\varepsilon^2}
=
 O\left(N^\eta\right)
+
 O\left(N^{2\gamma+1-2\eta}\right).
\end{eqnarray*}
Since
\[
\eta<1,
\quad
2\gamma+1-2\eta<1,
\]
there exists $\rho\in(0,1)$ such that
\[
\#\left\{k=1,\dots,N:\sigma_k\left(N^\gamma R_N\right)>\varepsilon\right\}
=
 O\left(N^\rho\right).
\]
In particular,
\[
\lim_{N\to\infty}\frac{\#\left\{k=1,\dots,N:\sigma_k\left(N^\gamma R_N\right)>\varepsilon\right\}}{N}=0,
\]
for any $\varepsilon>0$. By Theorem~\ref{thmGaroni}, we conclude that
\[
\left\{N^\gamma R_N\right\}_N\sim_{\sigma}0 .
\]
\end{proof}

\begin{corollary}
Let $\gamma<1$. Assume that $\alpha,\beta\ge \frac{1}{2}$. Then
\[
\left\{N^\gamma\left(\Psi_N-I_N^{\mathrm{ext}}T_N^{(J)}\right)\right\}_N
\sim_{\sigma}0 .
\]
Consequently, the two sequences
\[
\left\{N^\gamma\Psi_N\right\}_N
\quad\text{and}\quad
\left\{N^\gamma I_N^{\mathrm{ext}}T_N^{(J)}\right\}_N
\]
differ by a zero-distributed sequence. Hence, whenever one of them admits an
asymptotic singular value distribution, the other one admits the same
distribution.
\end{corollary}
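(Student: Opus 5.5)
The plan is to read the statement off Theorem~\ref{thm:struct-tridiag} and Proposition~\ref{prop:quant_scaled_RN_zero}. By the decomposition \eqref{PsiN}, $\Psi_N-I_N^{\mathrm{ext}}T_N^{(J)}=R_N$ exactly, for every $N$ and every partition satisfying \eqref{nodecond}. Multiplying by $N^\gamma$ gives
\[
N^\gamma\left(\Psi_N-I_N^{\mathrm{ext}}T_N^{(J)}\right)=N^\gamma R_N .
\]
The hypotheses $\gamma<1$ and $\alpha,\beta\ge\frac12$ are those of Proposition~\ref{prop:quant_scaled_RN_zero}; in particular $\alpha,\beta>0$, as Theorem~\ref{thm:struct-tridiag} requires. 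That proposition then says the right-hand side is zero-distributed, which proves the first claim.

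For the second claim, write
\[
N^\gamma\Psi_N=N^\gamma I_N^{\mathrm{ext}}T_N^{(J)}+N^\gamma R_N .
\]
By the above, the two sequences differ by the zero-distributed sequence $\{N^\gamma R_N\}_N$.

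It remains to show that a zero-distributed perturbation preserves any singular value distribution, including a non-GLT one. The matrices here are $(N+1)\times N$, so $d_N=N$. I would use characterization~3) of Theorem~\ref{thmGaroni} and write $N^\gamma R_N=R'_N+E_N$ with $\operatorname{rank}(R'_N)=o(N)$ and $\|E_N\|\to0$. Then I would take the constant sequence $B_{N,m}=N^\gamma I_N^{\mathrm{ext}}T_N^{(J)}$, independent of $m$.

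With this choice, $\{B_{N,m}\}_N\xrightarrow{\mathrm{a.c.s.}}\{N^\gamma\Psi_N\}_N$. For each $m$, the rank bound $c_m N$ and the norm bound $s_m$ hold for all large $N$ once we choose $c_m=s_m=1/m$. The definition of a.c.s.\ is stated for square matrices, but it is size-agnostic, so it applies verbatim to these rectangular ones; one may alternatively pad with a zero column.

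Item A1 of Remark~\ref{link-appr-theory}, applied with constant symbol $\psi_m=\phi$, then transfers $\sim_\sigma\phi$ from one sequence to the other. Since the relation is symmetric (the difference is $\pm N^\gamma R_N$), the argument works in both directions.

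If one prefers not to invoke A1 for rectangular sequences, a direct argument also works. Interlacing under a rank-$r$ perturbation gives $\sigma_{i+r}(A+R')\le\sigma_i(A)\le\sigma_{i-r}(A+R')$. Weyl's inequality gives $|\sigma_i(A+E)-\sigma_i(A)|\le\|E\|$. Testing against $F\in C_c(\mathbb R)$, which is uniformly continuous and bounded, the averages differ by at most $2r\|F\|_\infty/N+\omega_F(\|E_N\|)\to0$, where $\omega_F$ denotes the modulus of continuity of $F$.

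No step is a real obstacle. The only point needing care is justifying the transfer of distributions for rectangular $(N+1)\times N$ sequences, which the elementary interlacing/Weyl argument above settles directly.
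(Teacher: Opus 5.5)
Your proof is the paper's proof: the decomposition \eqref{PsiN} makes the difference exactly $R_N$, Proposition~\ref{prop:quant_scaled_RN_zero} gives $\{N^\gamma R_N\}_N\sim_\sigma 0$, and your a.c.s./A1 argument with a constant approximating sequence correctly supplies the transfer-of-distribution step that the paper treats as immediate.

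One caveat concerns your optional direct argument. Rank-$r$ interlacing does \emph{not} give $\bigl|\sum_i F(\sigma_i(A))-\sum_i F(\sigma_i(A+R'))\bigr|\le 2r\|F\|_\infty$ for arbitrary $F\in C_c(\mathbb R)$. For example, take $A=\operatorname{diag}(1,1.2,1.4)$ and a rank-one $A+vv^\top$ with eigenvalues $1.1,1.3,1.5$. If $F$ equals $1$ at the first three points and $0$ at the last three, the sum changes by $3>2$. What interlacing actually shows is that the singular value counting functions differ by at most $r$, which yields the bound $r\,\mathrm{TV}(F)$ for $F$ of bounded variation. To conclude that the normalized difference tends to zero, you then need to approximate $F$ uniformly by such functions, or use a gap-counting argument based on the uniform continuity of $F$.
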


\begin{proof}
By Theorem~\ref{thm:struct-tridiag}, we have
\[
\Psi_N-I_N^{\mathrm{ext}}T_N^{(J)}=R_N .
\]
The result follows immediately from Proposition~\ref{prop:quant_scaled_RN_zero}.
\end{proof}

\begin{corollary}\label{prop:RN_zero}
Assume that $\alpha,\beta\ge \frac{1}{2}$. Then $\left\{R_N\right\}_N$ is a
zero-distributed sequence, namely
\[
\left\{R_N\right\}_N\sim_{\sigma}0.
\]
\end{corollary}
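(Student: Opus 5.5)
This is the special case $\gamma=0$ of Proposition~\ref{prop:quant_scaled_RN_zero}. The plan is to invoke that proposition with $\gamma=0<1$ under the same hypothesis $\alpha,\beta\ge\frac12$. Since $N^0R_N=R_N$, we get $\{R_N\}_N\sim_\sigma 0$ immediately.

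For completeness, I would also point out that the mechanism is elementary at $\gamma=0$, so the reader can see the bound directly. Fix $\eta\in(0,1)$, say $\eta=\tfrac12$, and put $m_N=\lceil N^{\eta}\rceil$. Split $R_N=F_N^{(1)}+F_N^{(2)}$, where $F_N^{(1)}$ keeps the first $m_N$ columns and $F_N^{(2)}$ keeps the rest.

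\begin{itemize}
\item The first piece satisfies $\operatorname{rank}(F_N^{(1)})\le m_N=o(N)$.
\item For the second piece, Lemma~\ref{lem:weighted_jacobi_uniform_bound} and \eqref{eq:rjtilde} give $\sup_x|\widetilde r_j(x)|^2\le C(\alpha,\beta)(j+1)^{-3}$. Hence
\[
\|F_N^{(2)}\|_F^2\le C(\alpha,\beta)(N+1)\sum_{j>m_N}(j+1)^{-3}=O(N^{1-2\eta})=o(N).
\]
\item Counting singular values above $\varepsilon$ through the Weyl inequality and the Frobenius bound, as in the proof of Proposition~\ref{prop:quant_scaled_RN_zero}, the number of them is $o(N)$.
\end{itemize}

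Item~2) of Theorem~\ref{thmGaroni} then yields the claim.

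One minor point deserves a remark: $R_N$ has size $(N+1)\times N$, so $d_N=N$. This is exactly the setting already handled in Proposition~\ref{prop:quant_scaled_RN_zero}, so no step poses a real obstacle.
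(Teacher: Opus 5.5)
Your proposal is correct and matches the paper's proof: the corollary is exactly Proposition~\ref{prop:quant_scaled_RN_zero} with $\gamma=0$. Your optional sketch reproduces that proposition's column-splitting, Frobenius and Weyl-counting argument with $\eta\in(0,1)$, and it checks out.
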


\begin{proof}
The conclusion follows from Proposition~\ref{prop:quant_scaled_RN_zero} by
choosing $\gamma=0$.
\end{proof}

We now compute the asymptotic singular value symbol of the matrix sequence
\[
\left\{N T_N^{(J)}\right\}_N,
\]
where
$T_N^{(J)}\in\mathbb{R}^{(N+2)\times N}$ is the matrix defined
in~\eqref{matrixTNj}.

\begin{theorem}
\label{thm:toeplitz-symbol-TJ}
Let $\alpha,\beta>0$ and set
\[
\sigma:=\alpha+\beta,\quad \delta:=\alpha-\beta.
\]
Then
\[
\left\{N T_N^{(J)}\right\}_N\sim_{\mathrm{GLT}} \kappa(y,\theta),
\]
where
\begin{equation}\label{funkappa}
\kappa(y,\theta)
=
\frac{1}{y}
\left[
\sigma+2\delta e^{i\theta}+\sigma e^{2i\theta}
\right],
\quad
(y,\theta)\in (0,1]\times[-\pi,\pi].
\end{equation}
\end{theorem}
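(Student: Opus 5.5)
The plan is to reduce $N T_N^{(J)}$ to a product of a banded Toeplitz matrix and a diagonal sampling matrix, plus a zero-distributed correction. The rules \textbf{GLT 2}--\textbf{GLT 3} then give the symbol directly.

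First I would deal with the rectangular shape. Since $T_N^{(J)}\in\mathbb{R}^{(N+2)\times N}$ while the GLT framework of Section~\ref{sec3} is square, I interpret the statement through the $N\times N$ leading block $\widehat T_N$, obtained by deleting the last two rows. Those rows are a rank-$\le 2$ modification, so they do not affect any distribution.

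Next I read off the structure of $\widehat T_N$ from \eqref{matrixTNj}. Column $j$ has $\ell_j$ on the diagonal ($i-j=0$), $d_j$ on the first subdiagonal ($i-j=1$) and $u_j$ on the second subdiagonal ($i-j=2$). From \eqref{aj}--\eqref{cj} we have $a_j=\tfrac12+O(1/j)$, $c_j=\tfrac12+O(1/j)$ and $b_j=O(1/j^2)$. Inserting these into \eqref{ujdjlj} gives
\[
N\ell_j=\frac{\sigma N}{j}+O\!\left(\frac{N}{j^2}\right),\quad
N d_j=\frac{2\delta N}{j}+O\!\left(\frac{N}{j^2}\right),\quad
N u_j=\frac{\sigma N}{j}+O\!\left(\frac{N}{j^2}\right),
\]
with constants depending only on $\alpha,\beta$. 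With the convention \eqref{fou:def-pol}, where $e^{ir\theta}$ sits on the diagonal $i-j=r$, this yields
\[
N\widehat T_N = T_N(p)\,D_N\!\left(y^{-1}\right)+E_N,\qquad p(\theta)=\sigma+2\delta e^{i\theta}+\sigma e^{2i\theta}.
\]
Here $D_N(y^{-1})=\operatorname{diag}(N/1,\dots,N/N)$, and $E_N$ is banded (bandwidth $3$) with column-$j$ entries bounded by $CN/j^2$.

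To show $\{E_N\}_N\sim_\sigma0$ I would use item 3) of Theorem~\ref{thmGaroni}. Set $m_N=\lceil N^{3/4}\rceil$ and split $E_N$ into its first $m_N$ columns plus the rest. The first part has rank $\le m_N=o(N)$. The second part is banded with entries at most $CN/m_N^2\le CN^{-1/2}$, so its spectral norm is at most $3CN^{-1/2}\to0$. Then \textbf{GLT 2} gives $\{T_N(p)\}_N\sim_{\mathrm{GLT}}p(\theta)$, $\{D_N(y^{-1})\}_N\sim_{\mathrm{GLT}}y^{-1}$ and $\{E_N\}_N\sim_{\mathrm{GLT}}0$. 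The product and sum rules of \textbf{GLT 3} then give $\{N\widehat T_N\}_N\sim_{\mathrm{GLT}}p(\theta)/y=\kappa(y,\theta)$.

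The step I expect to be the main obstacle is the unbounded factor $D_N(y^{-1})$, since $y^{-1}$ is not Riemann integrable on $[0,1]$. I would first rely on the extension of diagonal sampling sequences to a.e.-finite measurable functions adopted in Section~\ref{sec3}. If a self-contained argument is preferred, I would use \textbf{GLT 4} instead. Replace $y^{-1}$ by the bounded truncation $\min\{y^{-1},m\}$. The difference $T_N(p)\big(D_N(y^{-1})-D_N(\min\{y^{-1},m\})\big)$ is supported on the first $\lfloor N/m\rfloor$ columns, so it has rank $\le N/m$. This gives an a.c.s.\ with $c_m=1/m$ and $s_m=0$. Finally, $p(\theta)\min\{y^{-1},m\}\to\kappa$ a.e., and hence in measure on the finite-measure domain $[0,1]\times[-\pi,\pi]$.
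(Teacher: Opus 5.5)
Your proposal is correct and follows essentially the same route as the paper: the same comparison matrix $T_N(p)D_N(y^{-1})$ (the paper's $B_N$), the same column split to show that the remainder is zero-distributed, and then \textbf{GLT 2}--\textbf{GLT 3}; the paper cuts at $\lceil N^{2/3}\rceil$ where you cut at $\lceil N^{3/4}\rceil$. Two of your steps go beyond the paper and are worth keeping: the reduction to the $N\times N$ leading block (the paper works directly with the $(N+2)\times N$ matrix), and the truncation argument via \textbf{GLT 4} for the non-Riemann-integrable factor $y^{-1}$ (the paper only appeals to the ``standard extension'' of diagonal sampling sequences).
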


\begin{proof}
From the definitions~\eqref{aj}-\eqref{cj}, we have
\[
a_j=\frac{1}{2}+O\left(\frac{1}{j}\right),
\quad
b_j=O\left(\frac{1}{j^2}\right),
\quad
c_j=\frac{1}{2}+O\left(\frac{1}{j}\right).
\]
Consequently, by~\eqref{ujdjlj}, we get
\begin{equation*}
j u_j=\sigma+O\left(\frac{1}{j}\right),
\quad
j d_j=2\delta+O\left(\frac{1}{j}\right),
\quad
j\ell_j=\sigma+O\left(\frac{1}{j}\right).
\end{equation*}
Equivalently,
\begin{equation}
    \label{juj}
    u_j=\frac{\sigma}{j}+O\left(\frac{1}{j^2}\right),
\quad
d_j=\frac{2\delta}{j}+O\left(\frac{1}{j^2}\right),
\quad
\ell_j=\frac{\sigma}{j}+O\left(\frac{1}{j^2}\right).
\end{equation}
For $r=0,1,2$, let $E_{N,r}\in\mathbb{R}^{(N+2)\times N}$ be defined by
\[
\left[E_{N,r}\right]_{j+r,j}=1,\quad j=1,\dots,N,
\]
with all the remaining entries equal to zero. We also set
\[
D_N=\operatorname{diag}\left(\frac{N}{1},\frac{N}{2},\dots,\frac{N}{N}\right).
\]
Define
\[
B_N
=
\left(
\sigma E_{N,0}+2\delta E_{N,1}+\sigma E_{N,2}
\right)D_N .
\]
By construction, the $j$-th column of $B_N$ has exactly three nonzero entries,
namely
\[
\left[B_N\right]_{j,j}=\frac{\sigma N}{j},\quad
\left[B_N\right]_{j+1,j}=\frac{2\delta N}{j},\quad
\left[B_N\right]_{j+2,j}=\frac{\sigma N}{j}.
\]
On the other hand, by the definition of $T_N^{(J)}$, we have
\[
\left[NT_N^{(J)}\right]_{j,j}=N\ell_j,\quad
\left[NT_N^{(J)}\right]_{j+1,j}=Nd_j,\quad
\left[NT_N^{(J)}\right]_{j+2,j}=Nu_j.
\]
Thus the $j$-th column of $N T_N^{(J)}-B_N$ may be nonzero only in the rows
$j,j+1,j+2$, where its entries are respectively
\[
N\ell_j-\frac{\sigma N}{j},\quad
Nd_j-\frac{2\delta N}{j},\quad
Nu_j-\frac{\sigma N}{j}.
\]
Using~\eqref{juj}, there exists a constant $C>0$, independent of $N$ and $j$,
such that
\begin{equation}\label{dscaaaa}
\left|
N\ell_j-\frac{\sigma N}{j}
\right|
+
\left|
Nd_j-\frac{2\delta N}{j}
\right|
+
\left|
Nu_j-\frac{\sigma N}{j}
\right|
\le
C\frac{N}{j^2}.
\end{equation}
We now show that
\[
\left\{N T_N^{(J)}-B_N\right\}_N\sim_{\sigma}0.
\]
Let
\[
m_N=\left\lceil N^{2/3}\right\rceil.
\]
We decompose
\[
N T_N^{(J)}-B_N=F_N^{(1)}+F_N^{(2)},
\]
where $F_N^{(1)}$ contains the first $m_N$ columns and $F_N^{(2)}$ the remaining
ones. Hence
\[
\operatorname{rank}\left(F_N^{(1)}\right)\le m_N,
\]
and therefore
\[
\lim_{N\to\infty}
\frac{\operatorname{rank}\left(F_N^{(1)}\right)}{N}=0.
\]
On the other hand, by~\eqref{dscaaaa}, every nonzero entry of $F_N^{(2)}$ is
bounded in modulus by
\[
C\frac{N}{m_N^2}.
\]
Since $F_N^{(2)}$ has at most three nonzero entries in each column and at most
three nonzero entries in each row, we have
\[
\left\|F_N^{(2)}\right\|
\le
\sqrt{
\left\|F_N^{(2)}\right\|_1
\left\|F_N^{(2)}\right\|_{\infty}
}
\le
3C\frac{N}{m_N^2}.
\]
Since $m_N=\left\lceil N^{2/3}\right\rceil$, it follows that
\[
\lim_{N\to\infty}\left\|F_N^{(2)}\right\|=0.
\]
By Theorem~\ref{thmGaroni}, we conclude that
\begin{equation}\label{eq:NTJ_minus_BN_zero}
\left\{N T_N^{(J)}-B_N\right\}_N\sim_{\sigma}0.
\end{equation}
Hence, we have
\[
\{N T_N^{(J)}-B_N\}_N\sim_{\mathrm{GLT}}0.
\]
It remains to determine the symbol of $B_N$. The matrix
\[
\sigma E_{N,0}+2\delta E_{N,1}+\sigma E_{N,2}
\]
has Toeplitz symbol
\[
p(\theta)=\sigma+2\delta e^{i\theta}+\sigma e^{2i\theta}.
\]
Moreover, the diagonal matrix $D_N$ is the sampling matrix of the function
$y\mapsto y^{-1}$ on $(0,1]$. Therefore, by the standard GLT algebra for
products of Toeplitz sequences and diagonal sampling sequences, we have
\[
\left\{B_N\right\}_N
\sim_{\mathrm{GLT}}
\frac{p(\theta)}{y}.
\]
Then, by~\eqref{eq:NTJ_minus_BN_zero}, we obtain
\[
\left\{N T_N^{(J)}\right\}_N
\sim_{\mathrm{GLT}}
\frac{1}{y}
\left[
\sigma+2\delta e^{i\theta}+\sigma e^{2i\theta}
\right].
\]
Finally, since
\[
\sigma+2\delta e^{i\theta}+\sigma e^{2i\theta}
=
2e^{i\theta}\left(\delta+\sigma\cos\theta\right),
\]
from~\eqref{funkappa}, we have
\[
|\kappa(y,\theta)|
=
\frac{2}{y}
\left|
\delta+\sigma\cos\theta
\right|.
\]
Since $\sigma=\alpha+\beta>0$ and $|\delta|<\sigma$, the equation
\[
\delta+\sigma\cos\theta=0
\]
has at most finitely many solutions in $[-\pi,\pi]$. Hence $\kappa$ is nonzero
a.e. on $(0,1]\times[-\pi,\pi]$.
\end{proof}

We now prove a few auxiliary lemmas that are useful for the spectral analysis 
of the weighted histopolation matrix $H_N$. The first one concerns the weighted
primitives $I_j$ defined in~\eqref{eq:Ij}.

\begin{lemma}\label{lem:primitive_I_uniform_bound}
Assume that $\alpha,\beta>\frac{1}{2}$. Then there exists a constant
$C(\alpha,\beta)>0$ such that
\begin{equation*}
\sup_{x\in[-1,1]}\left|I_j(x)\right|
\le
\frac{C(\alpha,\beta)}{\sqrt{j+1}},
\quad j\ge0.
\end{equation*}
\end{lemma}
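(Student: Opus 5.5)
We need to bound $\sup_x|I_j(x)|$ by $C/\sqrt{j+1}$, where $I_j(x)=\int_{-1}^x P_j^{(\alpha,\beta)}(t)\,\omega_{\alpha-1,\beta-1}(t)\,dt$. The plan is Cauchy--Schwarz against the orthonormal Jacobi polynomial combined with the norm estimate \eqref{eq:Kj_bound}. Writing $P_j^{(\alpha,\beta)}=\sqrt{K_j}\,\widehat P_j^{(\alpha,\beta)}$ and splitting $\omega_{\alpha-1,\beta-1}=\omega_{\alpha,\beta}^{1/2}\,\omega_{\alpha-2,\beta-2}^{1/2}$, we get for every $x\in[-1,1]$
\[
|I_j(x)|\le \sqrt{K_j}\left(\int_{-1}^{1}\left|\widehat P_j^{(\alpha,\beta)}(t)\right|^2\omega_{\alpha,\beta}(t)dt\right)^{1/2}\left(\int_{-1}^{1}\omega_{\alpha-2,\beta-2}(t)dt\right)^{1/2}.
\]
The first integral equals $1$. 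The second is finite only when $\alpha,\beta>1$, which is stronger than the hypothesis $\alpha,\beta>\tfrac12$. So this naive splitting works only for $\alpha,\beta>1$, where \eqref{eq:Kj_bound} immediately gives $|I_j(x)|\le C/\sqrt{j+1}$.

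To cover the full range $\alpha,\beta>\tfrac12$, I would instead use the pointwise bound from the proof of Lemma~\ref{lem:weighted_jacobi_uniform_bound}. For orthonormal Jacobi polynomials,
\[
|\widehat P_j^{(\alpha,\beta)}(t)|\le C_0^{1/2}(1-t)^{-\alpha/2-1/4}(1+t)^{-\beta/2-1/4}.
\]
Then
\[
|I_j(x)|\le \sqrt{K_j}\,C_0^{1/2}\int_{-1}^1 (1-t)^{\alpha/2-5/4}(1+t)^{\beta/2-5/4}\,dt.
\]
This integral is finite exactly when $\alpha/2-5/4>-1$ and $\beta/2-5/4>-1$, i.e.\ $\alpha,\beta>\tfrac12$, which matches the hypothesis precisely. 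Together with $K_j\le C_1/(j+1)$ from \eqref{eq:Kj_bound}, this gives $\sup_x|I_j(x)|\le C(\alpha,\beta)/\sqrt{j+1}$ uniformly in $x$, since the integrand is nonnegative and we may integrate over the whole interval. This second route is the proof I would actually write.

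The only delicate point is the Nevai-type uniform estimate, already quoted in Lemma~\ref{lem:weighted_jacobi_uniform_bound}. It holds for $\alpha,\beta>-1$, in particular for $\alpha,\beta>\tfrac12$, so it can be invoked directly. The case $j=0$ is harmless: $P_0=1$ and $\omega_{\alpha-1,\beta-1}$ is integrable since $\alpha,\beta>0$.
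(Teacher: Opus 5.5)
Your second route is exactly the paper's proof: bound $|I_j(x)|$ by the integral over $[-1,1]$, apply the Erd\'elyi--Magnus--Nevai pointwise bound to $\widehat P_j^{(\alpha,\beta)}$, note that $(1-t)^{\alpha/2-5/4}(1+t)^{\beta/2-5/4}$ is integrable precisely when $\alpha,\beta>\frac12$, and conclude with $K_j\le C_1/(j+1)$ from \eqref{eq:Kj_bound}. One small correction that does not affect the argument: that estimate is valid for $\alpha,\beta\ge-\frac12$, not for all $\alpha,\beta>-1$; for $\alpha<-\frac12$ the factor $(1-x)^{\alpha+1/2}$ blows up at $x=1$ while $\widehat P_j^{(\alpha,\beta)}(1)\neq0$.
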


\begin{proof}
By definition, for any $x\in[-1,1]$ we have
\[
\left|I_j(x)\right|
\le
\int_{-1}^1
\left|P_j^{(\alpha,\beta)}(t)\right|
\omega_{\alpha-1,\beta-1}(t)dt.
\]
By the uniform estimate for orthonormal Jacobi polynomials, there exists a
constant $C_0(\alpha,\beta)>0$ such that
\[
(1-t)^{\alpha+\frac{1}{2}}(1+t)^{\beta+\frac{1}{2}}
\left|\widehat P_j^{(\alpha,\beta)}(t)\right|^2
\le
C_0(\alpha,\beta),
\quad t\in[-1,1],\quad j\ge0.
\]
Moreover,
\[
P_j^{(\alpha,\beta)}(t)=K_j^{1/2}\widehat P_j^{(\alpha,\beta)}(t),
\]
where $K_j$ is defined in~\eqref{Kj}. Hence
\[
\left|P_j^{(\alpha,\beta)}(t)\right|
\le
\sqrt{C_0(\alpha,\beta)}
K_j^{1/2}
(1-t)^{-\frac{\alpha}{2}-\frac14}
(1+t)^{-\frac{\beta}{2}-\frac14}.
\]
Multiplying by $\omega_{\alpha-1,\beta-1}(t)$, we obtain
\[
\left|P_j^{(\alpha,\beta)}(t)\right|
\omega_{\alpha-1,\beta-1}(t)
\le
\sqrt{C_0(\alpha,\beta)}
K_j^{1/2}
(1-t)^{\frac{\alpha}{2}-\frac54}
(1+t)^{\frac{\beta}{2}-\frac54}.
\]
Since $\alpha,\beta>\frac{1}{2}$, we have
\[
\frac{\alpha}{2}-\frac54>-1,
\quad
\frac{\beta}{2}-\frac54>-1.
\]
Therefore
\[
(1-t)^{\frac{\alpha}{2}-\frac54}
(1+t)^{\frac{\beta}{2}-\frac54}
\in L^1(-1,1).
\]
Hence there exists a constant $C_1(\alpha,\beta)>0$ such that
\[
\sup_{x\in[-1,1]}\left|I_j(x)\right|
\le
C_1(\alpha,\beta)K_j^{1/2}.
\]
Using~\eqref{eq:Kj_bound} and renaming the constant, we finally get
\[
\sup_{x\in[-1,1]}\left|I_j(x)\right|
\le
\frac{C(\alpha,\beta)}{\sqrt{j+1}},
\quad j\ge0.
\]
The proof is complete.
\end{proof}

The uniform bound in Lemma~\ref{lem:primitive_I_uniform_bound} allows us to
prove that the matrix $I_N^{\mathrm{ext}}$, defined in~\eqref{INext}, becomes negligible after scaling by
$N$.
\begin{proposition}\label{prop:INext_over_N_zero}
Assume that $\alpha,\beta>\frac{1}{2}$. Then
$\left\{\frac{I_N^{\mathrm{ext}}}{N}\right\}_N$ is a zero-distributed sequence, namely
\[
\left\{\frac{I_N^{\mathrm{ext}}}{N}\right\}_N\sim_{\sigma}0.
\]
\end{proposition}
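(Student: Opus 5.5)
Since $I_N^{\mathrm{ext}}/N$ is rectangular of size $(N+1)\times(N+2)$, we have $d_N=N+1\to\infty$, and by Theorem~\ref{thmGaroni} it is enough to check condition 2), or more simply condition 3). The natural route is to show that the spectral norm of $I_N^{\mathrm{ext}}/N$ tends to zero. Then condition 3) holds with $R_N=0$ and $E_N=I_N^{\mathrm{ext}}/N$.

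To bound the spectral norm, we use the Frobenius norm, which dominates it. By the definition~\eqref{INext} and Lemma~\ref{lem:primitive_I_uniform_bound}, every entry satisfies
\[
\left|\left[I_N^{\mathrm{ext}}\right]_{k+1,r+1}\right|=\left|I_r(x_k)\right|\le \frac{C(\alpha,\beta)}{\sqrt{r+1}}.
\]
Summing the squares over the $N+1$ rows and the columns $r=0,\dots,N+1$ gives
\[
\left\|I_N^{\mathrm{ext}}\right\|_F^2\le C(\alpha,\beta)^2 (N+1)\sum_{r=0}^{N+1}\frac{1}{r+1}\le C'(\alpha,\beta)\,N\log N .
\]
Hence
\[
\left\|\frac{I_N^{\mathrm{ext}}}{N}\right\|\le\left\|\frac{I_N^{\mathrm{ext}}}{N}\right\|_F\le C''\sqrt{\frac{\log N}{N}}\to 0 .
\]
This gives the claim via Theorem~\ref{thmGaroni}. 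Alternatively, the count estimate $\#\{\sigma_k>\varepsilon\}\le \|\cdot\|_F^2/\varepsilon^2=O(\log N/N)$, used exactly as in the proof of Proposition~\ref{prop:quant_scaled_RN_zero}, gives condition 2) directly.

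The only nontrivial ingredient is the uniform bound of Lemma~\ref{lem:primitive_I_uniform_bound}, which holds under exactly the assumption $\alpha,\beta>\frac12$. Once that bound is available, the rest is routine bookkeeping, namely the harmonic sum producing the $\log N$ factor. No splitting into low and high columns is needed, unlike in Proposition~\ref{prop:quant_scaled_RN_zero}, because the $1/N$ scaling already absorbs the $O(\sqrt{N\log N})$ growth of the Frobenius norm.
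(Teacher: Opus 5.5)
Your proof is correct and follows essentially the same route as the paper: the entrywise bound from Lemma~\ref{lem:primitive_I_uniform_bound}, a Frobenius-norm estimate via the harmonic sum giving $\|I_N^{\mathrm{ext}}\|_F^2=O(N\log N)$, then $\|I_N^{\mathrm{ext}}/N\|\le\|I_N^{\mathrm{ext}}/N\|_F\to0$, and finally Theorem~\ref{thmGaroni}. One small point: writing the bound as $C'(N+1)(1+\log(N+2))$, as the paper does, also covers $N=1$ where $\log N=0$, though this has no effect on the limit.
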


\begin{proof}
By the definition~\eqref{INext}, we have
\[
\left[I_N^{\mathrm{ext}}\right]_{k+1,r+1}
=
I_r\left(x_k\right),
\]
for $k=0,\dots,N,$ $r=0,\dots,N+1.$
Using Lemma~\ref{lem:primitive_I_uniform_bound}, we get
\[
\left|I_r\left(x_k\right)\right|
\le \sup_{x\in[-1,1]}\left|I_r\left(x\right)\right|\le \frac{C(\alpha,\beta)}{\sqrt{r+1}},
\]
for $k=0,\dots,N,$ $ r=0,\dots,N+1.$
Therefore
\begin{eqnarray*}
\left\|I_N^{\mathrm{ext}}\right\|_F^2
&=&
\sum_{k=0}^{N}\sum_{r=0}^{N+1}
\left|I_r\left(x_k\right)\right|^2\le
\left(C(\alpha,\beta)\right)^2
\sum_{k=0}^{N}\sum_{r=0}^{N+1}
\frac{1}{r+1}\\
&=&
\left(C(\alpha,\beta)\right)^2
(N+1)
\sum_{r=0}^{N+1}
\frac{1}{r+1}.
\end{eqnarray*}
Since
\[
\sum_{r=0}^{N+1}\frac{1}{r+1}
=
\sum_{r=1}^{N+2}\frac{1}{r}
\le
1+\log(N+2),
\]
we obtain
\[
\left\|I_N^{\mathrm{ext}}\right\|_F^2
\le
\left(C(\alpha,\beta)\right)^2
(N+1)\left(1+\log(N+2)\right).
\]
Consequently, we have
\[
\left\|\frac{I_N^{\mathrm{ext}}}{N}\right\|_F^2
\le
\left(C(\alpha,\beta)\right)^2
\frac{N+1}{N^2}
\left(1+\log(N+2)\right).
\]
Hence
\[
\lim_{N\to\infty}
\left\|\frac{I_N^{\mathrm{ext}}}{N}\right\|_F
=0.
\]
Since the spectral norm is bounded by the Frobenius norm, it follows that
\[
\lim_{N\to\infty}
\left\|\frac{I_N^{\mathrm{ext}}}{N}\right\|
=0.
\]
Thus, by Theorem~\ref{thmGaroni}, we conclude that
\[
\left\{\frac{I_N^{\mathrm{ext}}}{N}\right\}_N\sim_{\sigma}0.
\]
\end{proof}

We now compute the symbol of the matrix $\Delta_N$, defined
in~\eqref{mat:DeltaN}. Its symbol is determined by the local size of the mesh.
In the following, we assume that the nodes are obtained from a uniform
reference grid through a smooth increasing map. More precisely, let
\[
z_i=-1+2u_i,\ u_i=\frac{i}{N},\quad i=0,\ldots,N,
\]
and let $\hat g\in C^1([-1,1])$ be such that
\[
\hat g(-1)=-1,\quad \hat g(1)=1,\quad \hat g'(y)>0\ \ a.e.,\quad y\in[-1,1].
\]
For every $N\ge1$, we define
\[
x_i=\hat g\left(z_i\right)=g(u_i),\quad i=0,\ldots,N,
\]
and set
\[
h_i=x_i-x_{i-1}
=
g\left(u_i\right)
-
g\left(u_{i-1}\right),
\quad i=1,\ldots,N.
\]

\begin{lemma}\label{lem:DeltaN_symbol}
Under the assumptions stated above on the mesh, we have
\[
\left\{\frac{\Delta_N}{N}\right\}_N
\sim_{\mathrm{GLT}}
\frac{1-e^{i\theta}}{g'(y)},
\quad (y,\theta)\in[0,1]\times[-\pi,\pi].
\]
\end{lemma}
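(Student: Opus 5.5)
The plan is to split $\Delta_N/N$ into a diagonal scaling, a bidiagonal Toeplitz matrix and a rank-one remainder. I then identify each piece with a GLT generator (or a zero-distributed sequence) and combine them with the algebraic rules \textbf{GLT 2} and \textbf{GLT 3}.

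\emph{Structural splitting.} From \eqref{mat:DeltaN}, row $i$ of $\Delta_N$ contains $-1/h_i$ in the column associated with $u_{i-1}$ and $1/h_i$ in the column associated with $u_i$. I split off the first column (the one multiplying $u_0$), which has the single nonzero entry $-1/h_1$ in row $1$ and is therefore of rank one. This gives
\[
\frac{\Delta_N}{N}=\Big[\,-\tfrac{1}{Nh_1}\boldsymbol e_1\;\Big|\;\widehat D_N\,T_N\big(1-e^{i\theta}\big)\Big],
\qquad
\widehat D_N=\operatorname{diag}\Big(\frac{1}{Nh_1},\dots,\frac{1}{Nh_N}\Big).
\]
In the $N\times N$ block, $T_N(1-e^{i\theta})$ has $1$ on the main diagonal and $-1$ on the diagonal $i-j=1$. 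With the convention \eqref{fou:def-pol} this is exactly the Toeplitz matrix of $1-e^{i\theta}$. The rectangular sequence and its square part differ only by a rank-one column. As in the treatment of $N T_N^{(J)}$, this perturbation is zero-distributed by Theorem~\ref{thmGaroni}, so it suffices to show that $\{\widehat D_N T_N(1-e^{i\theta})\}_N\sim_{\mathrm{GLT}}(1-e^{i\theta})/g'(y)$.

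\emph{Symbol of the mesh scaling.} I first treat the inverse matrix $\widehat D_N^{-1}=\operatorname{diag}(Nh_1,\dots,Nh_N)$. By the mean value theorem, $Nh_i=g'(\xi_i)$ for some $\xi_i\in(u_{i-1},u_i)$. Since $g'$ is continuous on $[0,1]$, it is uniformly continuous, and therefore
\[
\big\|\widehat D_N^{-1}-D_N(g')\big\|=\max_i|g'(\xi_i)-g'(u_i)|\le \omega_{g'}(1/N)\to0,
\]
where $\omega_{g'}$ denotes the modulus of continuity of $g'$. Hence the difference is zero-distributed by Theorem~\ref{thmGaroni}. Since $g'$ is continuous, hence Riemann integrable, \textbf{GLT 2} gives $\{\widehat D_N^{-1}\}_N\sim_{\mathrm{GLT}}g'(y)$.

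\emph{Main obstacle and conclusion.} The delicate point is that $g'$ may vanish on a null set. Then $1/g'$ need not be bounded or Riemann integrable, so I cannot sample $1/g'$ directly. I circumvent this through the inversion rule in \textbf{GLT 3}. Since $\hat g'>0$ a.e., the symbol $g'(y)=2\hat g'(-1+2y)$ is nonzero a.e. Moreover $\widehat D_N^{-1}$ is an invertible diagonal matrix, because $h_i>0$, so its Moore--Penrose pseudo-inverse is simply $\widehat D_N$. This yields $\{\widehat D_N\}_N\sim_{\mathrm{GLT}}1/g'(y)$. Combining this with $\{T_N(1-e^{i\theta})\}_N\sim_{\mathrm{GLT}}1-e^{i\theta}$ (\textbf{GLT 2}) through the product rule in \textbf{GLT 3}, and absorbing the rank-one column, gives the claimed symbol $(1-e^{i\theta})/g'(y)$.
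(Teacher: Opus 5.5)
Your proposal is correct and follows essentially the same route as the paper: factor $\Delta_N/N=\operatorname{diag}((Nh_i)^{-1})\,[-e_1\ T_N(1-e^{i\theta})]$, discard the rank-one boundary column, write $\operatorname{diag}(Nh_i)=D_N(g')+Z_N$ with $\|Z_N\|\to0$, and then apply the inversion and product rules of \textbf{GLT 3}, which is legitimate because $g'\neq0$ a.e. Your mean value theorem plus uniform continuity argument is simply a more explicit version of the paper's Taylor-expansion step.
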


\begin{proof}
By definition of $\Delta_N$ in~\eqref{mat:DeltaN}, taking into account the
definition of $T_N(f)$ in~\eqref{toe:def}-\eqref{fou:def}, with specific
reference to the polynomial setting in~\eqref{toe:def-pol}-\eqref{fou:def-pol},
we have
\[
\frac{\Delta_N}{N}
=
\operatorname{diag}((Nh_i)^{-1})B_N,
\]
where $B_N\in\mathbb{R}^{N\times(N+1)}$ is the rectangular backward-difference
matrix
\[
B_N=\left[-e_1\ \ T_N(1-e^{i\theta})\right],
\]
with $e_1$ denoting the first vector of the canonical basis of $\mathbb{R}^N$.
Indeed, the $i$-th row of $B_N$ contains the two nonzero entries $-1$ and $1$
in the positions corresponding to $u_{i-1}$ and $u_i$, respectively.
The additional boundary column $-e_1$ has rank one and therefore does not
affect the GLT symbol. Hence
\[
\{B_N\}_N\sim_{\mathrm{GLT}}1-e^{i\theta}.
\]

The diagonal component can be written as
\[
\operatorname{diag}((Nh_i)^{-1})=[\operatorname{diag}(Nh_i)]^{-1},
\]
where
\[
\operatorname{diag}(Nh_i)
=
\operatorname{diag}\left(N\left(g(u_i)-g(u_{i-1})\right)\right).
\]
By a Taylor expansion, we can write
\[
N\left(g(u_i)-g(u_{i-1})\right)=g'(u_i)+\epsilon(i,N),
\]
where
\[
\lim_{N\to\infty}\max_i|\epsilon(i,N)|=0.
\]
Thus
\[
\operatorname{diag}(Nh_i)=D_N(g')+Z_N,
\]
where $Z_N$ is a diagonal matrix with infinitesimal entries. As a consequence,
$\{Z_N\}_N\sim_{\sigma}0$ by direct inspection and
$\{Z_N\}_N\sim_{\mathrm{GLT}}0$ by axiom \textbf{GLT 2}. Since, again by axiom
\textbf{GLT 2},
\[
\{D_N(g')\}_N\sim_{\mathrm{GLT}}g'(y),
\]
and since $g'>0$ a.e. by the assumptions on the mesh, we get
\[
\left\{[\operatorname{diag}(Nh_i)]^{-1}\right\}_N
\sim_{\mathrm{GLT}}
\frac{1}{g'(y)}.
\]
The conclusion follows by applying the algebraic rules in axiom \textbf{GLT 3} to
\[
\frac{\Delta_N}{N}
=
[\operatorname{diag}(Nh_i)]^{-1}B_N.
\]
\end{proof}

\begin{theorem}\label{thmStef}
Assume that $\alpha,\beta>\frac{1}{2}$ and that the mesh satisfies the
assumptions of Lemma~\ref{lem:DeltaN_symbol}. Then the weighted Jacobi
histopolation matrix sequence $\left\{\frac{H_N}{N}\right\}_N$ is zero-distributed,
namely
\[
\left\{\frac{H_N}{N}\right\}_N \sim_{\mathrm{GLT}}
0.
\]
\end{theorem}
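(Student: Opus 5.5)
We combine the exact factorizations of Theorem~\ref{thm:exact-factorization} and Theorem~\ref{thm:struct-tridiag}. Write
\[
\frac{H_N}{N}=\frac{\Delta_N}{N}\,\Psi_N=\frac{\Delta_N}{N}\,R_N+\frac{\Delta_N}{N}\,I_N^{\mathrm{ext}}T_N^{(J)}
=\frac{\Delta_N}{N}\,R_N+\frac{\Delta_N}{N}\cdot\frac{I_N^{\mathrm{ext}}}{N}\cdot\left(NT_N^{(J)}\right).
\]
The idea is that each summand is a product of factors that are GLT (hence sparsely unbounded), and each product contains at least one zero-distributed factor. We then conclude with the $*$-algebra rules \textbf{GLT 3} and the characterization \textbf{GLT 2}, namely that GLT with symbol $0$ is the same as zero-distributed.

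We first fix the sizes. The matrices are rectangular: $\Delta_N$ is $N\times(N+1)$, $R_N$ and $\Psi_N$ are $(N+1)\times N$, $I_N^{\mathrm{ext}}$ is $(N+1)\times(N+2)$, and $T_N^{(J)}$ is $(N+2)\times N$. To stay within the square unilevel setting of Section~\ref{sec3}, we pad every factor with zero rows or columns to a common size ($N+2$). We then view the sequences as indexed by the dimension, with the same symbols, since padding is a perturbation of rank at most $2$. This is the same device already used for the boundary column $-e_1$ in Lemma~\ref{lem:DeltaN_symbol}. With this convention, the ingredients are as follows:
\begin{itemize}
\item Lemma~\ref{lem:DeltaN_symbol} gives $\{\Delta_N/N\}_N\sim_{\mathrm{GLT}}(1-e^{i\theta})/g'(y)$;
\item Corollary~\ref{prop:RN_zero} gives $\{R_N\}_N\sim_\sigma 0$, hence $\sim_{\mathrm{GLT}}0$ by \textbf{GLT 2}; this applies because $\alpha,\beta>\tfrac12$ implies $\alpha,\beta\ge\tfrac12$;
\item Proposition~\ref{prop:INext_over_N_zero} gives $\{I_N^{\mathrm{ext}}/N\}_N\sim_{\mathrm{GLT}}0$;
\item Theorem~\ref{thm:toeplitz-symbol-TJ} gives $\{NT_N^{(J)}\}_N\sim_{\mathrm{GLT}}\kappa$.
\end{itemize}

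By the product rule in \textbf{GLT 3}, the first summand has symbol $\frac{1-e^{i\theta}}{g'(y)}\cdot 0=0$. The second has symbol $\frac{1-e^{i\theta}}{g'(y)}\cdot 0\cdot\kappa(y,\theta)=0$ a.e. This uses the fact that $1/g'$ and $\kappa$ are finite a.e., because $g'>0$ a.e. and $y>0$ a.e., so the products of measurable a.e.-finite functions with $0$ vanish a.e. The sum rule then gives $\{H_N/N\}_N\sim_{\mathrm{GLT}}0$, which is the claim. By \textbf{GLT 1}/\textbf{GLT 2}, this is equivalent to the zero-distribution statement.

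The main obstacle is that $NT_N^{(J)}$ has unbounded spectral norm: its entries are of order $N/j$, so the first columns are of size $N$. A naive norm argument of the form $\|\cdot\|\le\|\cdot\|\|\cdot\|$ therefore fails to show that the product with the zero-distributed factor $I_N^{\mathrm{ext}}/N$ is negligible. This is exactly where the GLT $*$-algebra structure is needed. Because all the factors are GLT sequences, they are sparsely unbounded, and by Theorem~\ref{th:s.u. *-algebra} the product of a zero-distributed sequence with sparsely unbounded ones remains zero-distributed. If one wants to avoid invoking \textbf{GLT 3} directly, the backup plan is a hand argument via Theorem~\ref{thmGaroni}:
\begin{itemize}
\item split $NT_N^{(J)}$ into its first $m_N=o(N)$ columns, a low-rank part, plus the remainder, whose norm is $O(N/m_N)$;
\item use the Frobenius estimate $\|I_N^{\mathrm{ext}}/N\|_F^2=O(\log N/N)$ from the proof of Proposition~\ref{prop:INext_over_N_zero} together with $\|\Delta_N/N\|\le 2/\min_i(Nh_i)$;
\item choose $m_N$ (e.g.\ $m_N=N/\log N$) so that the remaining term tends to zero.
\end{itemize}
This hand argument may require $g'$ bounded below. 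In general the algebraic route is preferred, because it needs only $g'>0$ a.e.
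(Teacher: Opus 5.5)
Your proposal is correct and follows the paper's proof essentially step for step. You use the same splitting $\frac{H_N}{N}=\frac{\Delta_N}{N}R_N+\frac{\Delta_N}{N}\bigl(\frac{1}{N}I_N^{\mathrm{ext}}\bigr)\bigl(NT_N^{(J)}\bigr)$, then the GLT product and sum rules with the zero symbols of $\{R_N\}_N$ and $\{I_N^{\mathrm{ext}}/N\}_N$. Your zero-padding device for the rectangular factors and your backup hand argument (valid under quasi-uniformity) are sound additions that the paper does not spell out.
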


\begin{proof}
By Theorem~\ref{thm:exact-factorization}, we have
\[
H_N=\Delta_N\Psi_N.
\]
Moreover, by Theorem~\ref{thm:struct-tridiag}, the primitive sampling matrix
admits the decomposition
\[
\Psi_N=R_N+I_N^{\mathrm{ext}}T_N^{(J)}.
\]
Hence
\[
H_N
=
\Delta_NR_N
+
\Delta_NI_N^{\mathrm{ext}}T_N^{(J)}.
\]

We first consider the term $\Delta_NR_N$. By Lemma~\ref{lem:DeltaN_symbol},
\[
\left\{\frac{\Delta_N}{N}\right\}_N
\sim_{\mathrm{GLT}}
\frac{1-e^{i\theta}}{g'(y)}.
\]
On the other hand, by Corollary~\ref{prop:RN_zero}, we have
\[
\left\{R_N\right\}_N\sim_\sigma0
\]
i.e. $\left\{R_N\right\}_N\sim_{\mathrm{GLT}}0$ by axiom \textbf{GLT 2}.

Using the GLT product rule, that is part 3 of axiom \textbf{GLT 3}, we obtain
\[
\left\{\frac{\Delta_N}{N}R_N\right\}_N
\sim_{\mathrm{GLT}}
0,
\]
so that, by \textbf{GLT 1}, we deduce
\[
\left\{\frac{\Delta_N}{N}R_N\right\}_N\sim_\sigma0.
\]

We now consider the second term. We write
\[
\frac{\Delta_N}{N}I_N^{\mathrm{ext}}T_N^{(J)}
=
\frac{\Delta_N}{N}
\left(\frac{1}{N}I_N^{\mathrm{ext}}\right)
\left(NT_N^{(J)}\right).
\]
By Proposition~\ref{prop:INext_over_N_zero},
\[
\left\{\frac{1}{N}I_N^{\mathrm{ext}}\right\}_N
\sim_{\mathrm{GLT}}0.
\]
Furthermore, by Theorem~\ref{thm:toeplitz-symbol-TJ},
\[
\left\{N T_N^{(J)}\right\}_N\sim_{\mathrm{GLT}} \frac{1}{y}
\left[
\sigma+2\delta e^{i\theta}+\sigma e^{2i\theta}
\right],
\quad
(y,\theta)\in (0,1]\times[-\pi,\pi].
\]
Therefore, another application of the GLT product rule (part 3 of \textbf{GLT 3}) leads to
\[
\left\{
\frac{\Delta_N}{N}
\left(\frac{1}{N}I_N^{\mathrm{ext}}\right)
\left(NT_N^{(J)}\right)
\right\}_N
\sim_{\mathrm{GLT}}
0.
\]
Consequently, by \textbf{GLT 1}, we find
\[
\left\{\frac{\Delta_N}{N} I_N^{\mathrm{ext}}T_N^{(J)}\right\}_N
\sim_\sigma0.
\]
Combining the two estimates and using \textbf{GLT 2}, we infer
\[
\left\{\frac{H_N}{N}\right\}_N
=
\left\{
\frac{\Delta_N}{N}R_N
+
\frac{\Delta_N}{N}I_N^{\mathrm{ext}}T_N^{(J)}
\right\}_N
\sim_{\mathrm{GLT}}0.
\]
Thus, by the singular value distribution property of GLT sequences, i.e., axiom \textbf{GLT 1}, we finally deduce
\[
\left\{\frac{H_N}{N}\right\}_N\sim_\sigma0.
\]
\end{proof}

\begin{remark}\label{two-sided ideal}
In the previous theorem we repeatedly used the fact that zero-distributed
matrix sequences form a two-sided ideal in the GLT $*$-algebra. In other words, if $\{Z_N\}\sim_{\mathrm{GLT}}0$ and $\{A_N\}$ is any GLT matrix sequence, then
$\{A_NZ_N\}\sim_{\mathrm{GLT}}0$ and
$\{Z_NA_N\}\sim_{\mathrm{GLT}}0$ by axiom \textbf{GLT 3};
that is, $\{A_NZ_N\}$ and $\{Z_NA_N\}$ are both zero-distributed. Such a strong property holds for the large $*$-algebra of sparsely unbounded matrix sequences (which possess less structure since there is no  $*$-algebra homomorphism with the space of  symbols): indeed, as recalled in Theorem \ref{th:s.u. *-algebra}, the zero-distributed matrix sequences are a two-sided ideal for the sparsely unbounded matrix sequences. This remark is of interest in our setting, since looking at Lemma \ref{lem:DeltaN_symbol} it allows us to impose weaker assumptions on the values $h_i$. More precisely if $\left\{\frac{\Delta_N}{N}\right\}_N$ is sparsely unbounded then we get again the result of Theorem \ref{thmStef} that is
\[
\left\{\frac{H_N}{N}\right\}_N \sim_{\mathrm{GLT}}
0,
\]
by exploiting the factorization $H_N=\Delta_N\Psi_N$ and the fact that $\{Z_N\}\sim_{\mathrm{GLT}} 0$.

Finally, if $g'$ is continuous and strictly positive on $[0,1]$, then the grid is
quasi-uniform. In this case, since $g'$ is bounded away from zero, the inverse
sequence $\{D_N^{-1}(g')\}_N$ is not only sparsely unbounded, but also uniformly
bounded in spectral norm.
\end{remark}

We introduce the matrix
\begin{equation}\label{eq:matDh}
D_h=\operatorname{diag}\left(h_1,\ldots,h_N\right),
\quad
h_i=x_i-x_{i-1}.
\end{equation}

\begin{lemma}\label{lem:H_weighted_column_bound}
Assume that $\alpha,\beta>\frac{1}{2}$. Then there exists a constant $C(\alpha,\beta)>0$, such that
\begin{equation}\label{eq:weighted_H_column_bound}
\left\|\left(D_h^{1/2}H_N\right)e_j\right\|_2^2
\le
\frac{C(\alpha,\beta)}{j},
\quad j=1,\ldots,N ,
\end{equation}
where $\{e_j\}_{j=1}^N$ denotes the canonical basis of $\mathbb{R}^N$.
\end{lemma}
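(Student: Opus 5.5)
Both sides of \eqref{eq:weighted_H_column_bound} can be written explicitly. By \eqref{eq:matHNentries},
\[
\left\|\left(D_h^{1/2}H_N\right)e_j\right\|_2^2=\sum_{i=1}^N h_i\left|\frac{1}{h_i}\int_{s_i}\varphi_{j-1}(t)\omega_{\alpha,\beta}(t)\,dt\right|^2=\sum_{i=1}^N \frac{1}{h_i}\left|\int_{s_i}P_{j-1}^{(\alpha+1,\beta+1)}(t)\omega_{\alpha,\beta}(t)\,dt\right|^2 .
\]
Apply Cauchy--Schwarz on each cell, splitting the integrand as $\bigl(P_{j-1}^{(\alpha+1,\beta+1)}\omega_{\alpha,\beta}\bigr)\cdot 1$. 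This gives
\[
\left|\int_{s_i}P_{j-1}^{(\alpha+1,\beta+1)}\omega_{\alpha,\beta}\,dt\right|^2\le h_i\int_{s_i}\left|P_{j-1}^{(\alpha+1,\beta+1)}(t)\right|^2\omega_{\alpha,\beta}(t)^2\,dt .
\]
The factor $h_i$ cancels, and summing over the cells yields
\[
\left\|\left(D_h^{1/2}H_N\right)e_j\right\|_2^2\le\int_{-1}^1\left|P_{j-1}^{(\alpha+1,\beta+1)}(t)\right|^2\omega_{\alpha,\beta}(t)^2\,dt .
\]
This bound is independent of the partition, which explains why the lemma needs no mesh assumptions.

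It remains to show that this integral is $O(1/j)$. Write $\omega_{\alpha,\beta}^2=\omega_{\alpha+1,\beta+1}\cdot\omega_{\alpha-1,\beta-1}$. Then
\[
\int_{-1}^1\left|P_{j-1}^{(\alpha+1,\beta+1)}\right|^2\omega_{\alpha,\beta}^2\,dt=\int_{-1}^1\left|P_{j-1}^{(\alpha+1,\beta+1)}\right|^2\omega_{\alpha+1,\beta+1}\,\omega_{\alpha-1,\beta-1}\,dt .
\]
If $\alpha,\beta\ge1$, the factor $\omega_{\alpha-1,\beta-1}$ is bounded on $[-1,1]$. The integral is then at most a constant times the Jacobi norm $K_{j-1}$ for parameters $(\alpha+1,\beta+1)$, and \eqref{eq:Kj_bound} (applied with those parameters) gives $O(1/j)$.

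For $\tfrac12<\alpha<1$ or $\tfrac12<\beta<1$ this factor blows up at an endpoint, so that shortcut fails. This is the main obstacle, and I would treat it with the Nevai-type uniform estimate as in Lemmas~\ref{lem:weighted_jacobi_uniform_bound} and~\ref{lem:primitive_I_uniform_bound}, now for the parameters $(\alpha+1,\beta+1)$:
\[
(1-t)^{\alpha+\frac32}(1+t)^{\beta+\frac32}\left|\widehat P_{j-1}^{(\alpha+1,\beta+1)}(t)\right|^2\le C_0 .
\]
Substituting $P_{j-1}^{(\alpha+1,\beta+1)}=K_{j-1}^{1/2}\widehat P_{j-1}^{(\alpha+1,\beta+1)}$ bounds the integrand by
\[
C_0K_{j-1}(1-t)^{2\alpha-\alpha-\frac32}(1+t)^{2\beta-\beta-\frac32}=C_0K_{j-1}(1-t)^{\alpha-\frac32}(1+t)^{\beta-\frac32}.
\]
This function is integrable precisely when $\alpha,\beta>\tfrac12$, which matches the hypothesis of the lemma. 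Integrating and using \eqref{eq:Kj_bound} for $(\alpha+1,\beta+1)$ gives $K_{j-1}\le C/j$. Hence the integral is at most $C(\alpha,\beta)/j$ for all $j\ge1$, which is \eqref{eq:weighted_H_column_bound}.
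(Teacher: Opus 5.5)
Your proposal is correct and follows the paper's proof essentially step for step. Both use cellwise Cauchy--Schwarz to reduce to $\int_{-1}^1 |P_{j-1}^{(\alpha+1,\beta+1)}|^2\omega_{\alpha,\beta}^2\,dt$, then the Erd\'elyi--Magnus--Nevai bound for parameters $(\alpha+1,\beta+1)$, the integrability of $(1-t)^{\alpha-\frac32}(1+t)^{\beta-\frac32}$ for $\alpha,\beta>\frac12$, and $K_{j-1}^{(\alpha+1,\beta+1)}\le C/j$. Your separate shortcut for $\alpha,\beta\ge1$ is valid but unnecessary, since the general argument already covers that case.
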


\begin{proof}
By the definition of $H_N$ with respect to the basis~\eqref{jacbasiabov}, we have
\[
[H_N]_{i,j}
=
\frac1{h_i}\int_{s_i} P_{j-1}^{(\alpha+1,\beta+1)}(t)\omega_{\alpha,\beta}(t) dt,
\quad i=1,\ldots,N.
\]
Hence, using the Cauchy--Schwarz inequality, we get
\begin{eqnarray*}
h_i \left|[H_N]_{i,j}\right|^2
&=&
h_i
\left|
\frac1{h_i}\int_{s_i}P_{j-1}^{(\alpha+1,\beta+1)}(t)\omega_{\alpha,\beta}(t)dt
\right|^2\\
&\le&
\int_{s_i}\left|P_{j-1}^{(\alpha+1,\beta+1)}(t)\omega_{\alpha,\beta}(t)\right|^2dt
\end{eqnarray*}
Summing over $i=1,\ldots,N,$ we get
\begin{eqnarray}\notag
\left\|\left(D_h^{1/2}H_N\right)e_j\right\|_2^2
&=&
\sum_{i=1}^N h_i \left|[H_N]_{i,j}\right|^2\\
&\le&
\int_{-1}^1
\left|P_{j-1}^{(\alpha+1,\beta+1)}(t)\right|^2
\left(\omega_{\alpha,\beta}(t)\right)^2dt.\label{eq:H_column_L2_reduction}
\end{eqnarray}
By the uniform estimate for orthonormal Jacobi polynomials, applied with
parameters $\alpha+1$ and $\beta+1$, there exists a constant $C_0(\alpha,\beta)>0$ such that
\[
(1-t)^{\alpha+\frac{3}{2}}(1+t)^{\beta+\frac{3}{2}}
\left|\widehat P_{j-1}^{(\alpha+1,\beta+1)}(t)\right|^2
\le
C_0(\alpha,\beta),
\quad t\in(-1,1),\quad j\ge1 .
\]
Therefore, denoting by $K_{j-1}^{(\alpha+1,\beta+1)}$ the corresponding Jacobi
norm, we have
\begin{eqnarray*}
\left|P_{j-1}^{(\alpha+1,\beta+1)}(t)\right|^2
\left(\omega_{\alpha,\beta}(t)\right)^2&=&
K_{j-1}^{(\alpha+1,\beta+1)}
\left|\widehat P_{j-1}^{(\alpha+1,\beta+1)}(t)\right|^2
(1-t)^{2\alpha}(1+t)^{2\beta}
\\
&\le&
C_0(\alpha,\beta)K_{j-1}^{(\alpha+1,\beta+1)}
(1-t)^{\alpha-\frac{3}{2}}
(1+t)^{\beta-\frac{3}{2}}.
\end{eqnarray*}
Since $\alpha,\beta>\frac{1}{2}$, the function
\[
(1-t)^{\alpha-\frac{3}{2}}(1+t)^{\beta-\frac{3}{2}}
\]
belongs to $L^1(-1,1)$. Hence
\begin{equation}\label{aus1}
    \int_{-1}^1
\left|P_{j-1}^{(\alpha+1,\beta+1)}(t)\right|^2
\left(\omega_{\alpha,\beta}(t)\right)^2dt
\le
C_1(\alpha,\beta)K_{j-1}^{(\alpha+1,\beta+1)} .
\end{equation}
Using~\eqref{eq:Kj_bound}, we obtain
\[
K_{j-1}^{(\alpha+1,\beta+1)}
\le
\frac{C_2(\alpha,\beta)}{j}.
\]
Combining this estimate with~\eqref{eq:H_column_L2_reduction} and~\eqref{aus1}, we have
\[
\left\|\left(D_h^{1/2}H_N\right)e_j\right\|_2^2
\le
\frac{C(\alpha,\beta)}{j},
\quad j=1,\ldots,N.
\]
This proves~\eqref{eq:weighted_H_column_bound}.
\end{proof}

\begin{theorem}\label{thm:scaled_weighted_H_zero}
Assume that $\alpha,\beta>\frac{1}{2}$. Then, for any fixed
$\gamma<\frac{1}{2}$, the sequence
\[
\left\{N^\gamma D_h^{1/2}H_N\right\}_N\sim_{\mathrm{GLT}}
0.
\]
\end{theorem}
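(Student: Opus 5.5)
We mimic the column-splitting argument of Proposition~\ref{prop:quant_scaled_RN_zero}, now using the column bound of Lemma~\ref{lem:H_weighted_column_bound} in place of the bound on $\widetilde r_j$. Fix $\gamma<\frac12$ and choose $\eta$ with $\max\{0,2\gamma\}<\eta<1$. Set $m_N=\lceil N^\eta\rceil$.

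We split $N^\gamma D_h^{1/2}H_N=F_N^{(1)}+F_N^{(2)}$. Here $F_N^{(1)}$ keeps the first $m_N$ columns and is zero elsewhere, and $F_N^{(2)}$ keeps the remaining columns. Then $\operatorname{rank}(F_N^{(1)})\le m_N=o(N)$. For the tail, Lemma~\ref{lem:H_weighted_column_bound} gives
\[
\left\|F_N^{(2)}\right\|_F^2
=N^{2\gamma}\sum_{j=m_N+1}^{N}\left\|\left(D_h^{1/2}H_N\right)e_j\right\|_2^2
\le C(\alpha,\beta)N^{2\gamma}\sum_{j=m_N+1}^{N}\frac1j
\le C(\alpha,\beta)N^{2\gamma}\log\frac{N}{m_N}.
\]
This bound is $O(N^{2\gamma}\log N)$. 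It is $o(N)$ since $2\gamma<1$, and note that this holds for any choice of $m_N$.

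Fix $\varepsilon>0$. The Weyl inequality argument leading to \eqref{eq:rank_pert_count}, combined with the Frobenius counting bound \eqref{est2}, yields
\[
\#\left\{k:\sigma_k\left(N^\gamma D_h^{1/2}H_N\right)>\varepsilon\right\}\le m_N+\frac{\left\|F_N^{(2)}\right\|_F^2}{\varepsilon^2}=O(N^\eta)+O\left(N^{2\gamma}\log N\right)=o(N).
\]
By Theorem~\ref{thmGaroni}(2) the sequence is zero-distributed. Axiom \textbf{GLT 2} then gives $\{N^\gamma D_h^{1/2}H_N\}_N\sim_{\mathrm{GLT}}0$.

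A simpler variant avoids splitting altogether. One can directly bound
\[
\left\|N^\gamma D_h^{1/2}H_N\right\|_F^2\le C N^{2\gamma}(1+\log N)=o(N),
\]
which implies that at most $o(N)$ singular values exceed $\varepsilon$. We would present this direct version, and keep the splitting only as an alternative.

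No step is a real obstacle, since the argument uses no mesh assumption. The only points to check are the following. First, the harmonic-sum growth $\log N$ must be beaten by the factor $N^{1-2\gamma}$, which is exactly where $\gamma<\frac12$ is needed. Second, the zero-distribution criterion must be applied with $d_N=N$ for these square $N\times N$ matrices.
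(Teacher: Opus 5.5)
Your proof is correct, and it reaches the result by a more elementary route than the paper, as follows. Your splitting argument is exactly the paper's proof: the cut-off $m_N=\lceil N^\eta\rceil$, Weyl's inequality for the low-rank part, and the Frobenius counting bound combined with Lemma~\ref{lem:H_weighted_column_bound} for the tail. Your extra requirement $\eta>2\gamma$ is harmless but unnecessary.

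The direct variant you say you would present is the genuinely different, and simpler, argument. The column bound $C(\alpha,\beta)/j$ of Lemma~\ref{lem:H_weighted_column_bound} carries no factor of $N$, because the $D_h^{1/2}$ weighting absorbs it. So the whole matrix already satisfies $\|N^\gamma D_h^{1/2}H_N\|_F^2\le C(\alpha,\beta)N^{2\gamma}(1+\log N)=o(N)$. The counting bound $\#\{k:\sigma_k>\varepsilon\}\le\|\cdot\|_F^2/\varepsilon^2$ then finishes the proof, with no low-rank piece and no Weyl inequality.

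The paper's own tail estimate $C N^{2\gamma}\log N$ never exploits the cut-off. The split there only keeps the argument parallel to Proposition~\ref{prop:quant_scaled_RN_zero}, where it is genuinely needed. In that proposition the column bound is $C N/(j+1)^3$, so the full Frobenius norm is $O(N^{1+2\gamma})$, which is $o(N)$ only for $\gamma<0$. Discarding the first $m_N\gg N^{\gamma}$ columns is what handles $0\le\gamma<1$.

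What the paper's template buys is robustness if the column bounds were poor for small $j$. What your direct version buys is brevity and a cleaner rate: the fraction of singular values above $\varepsilon$ is $O(N^{2\gamma-1}\log N/\varepsilon^2)$, without the extra $N^{\eta-1}$ term.
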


\begin{proof}
Let $\gamma<\frac{1}{2}$. Fix $\eta\in(0,1)$ and set
\begin{equation}\label{mNnews}
    m_N=\left\lceil N^\eta\right\rceil .
\end{equation}
We decompose
\[
N^\gamma D_h^{1/2}H_N=F_N^{(1)}+F_N^{(2)},
\]
where $F_N^{(1)}$ contains the first $m_N$ columns of
$N^\gamma D_h^{1/2}H_N$ and zeros elsewhere, while $F_N^{(2)}$ contains the remaining
columns. Then
\begin{equation}\label{rrankF1}
\operatorname{rank}\left(F_N^{(1)}\right)\le m_N,
\end{equation}
and therefore
\[
\lim_{N\to\infty}\frac{\operatorname{rank}\left(F_N^{(1)}\right)}{N}=0.
\]
Let $\varepsilon>0$ and let
\[
r=\operatorname{rank}\left(F_N^{(1)}\right), \quad s=\#\left\{k=1,\dots,N:\sigma_k\left(F_N^{(2)}\right)>\varepsilon\right\}.
\]
Using Weyl's inequality, in analogy with  Proposition~\ref{prop:quant_scaled_RN_zero}, we obtain
\begin{equation}\label{eq:weighted_H_count_split}
\#\left\{k=1,\dots,N:\sigma_k\left(N^\gamma D_h^{1/2}H_N\right)>\varepsilon\right\}
\le
r+s.
\end{equation}
Moreover
\begin{equation}\label{sqe}
    s=\#\left\{k=1,\dots,N:\sigma_k\left(F_N^{(2)}\right)>\varepsilon\right\}
\le
\frac{\left\|F_N^{(2)}\right\|_F^2}{\varepsilon^2}.
\end{equation}
By Lemma~\ref{lem:H_weighted_column_bound}, we have
\begin{eqnarray}\notag
\left\|F_N^{(2)}\right\|_F^2
&=&
N^{2\gamma}
\sum_{j=m_N+1}^{N}
\left\|\left(D_h^{1/2}H_N\right)e_j\right\|_2^2
\\
&\le&
C(\alpha,\beta)N^{2\gamma}
\sum_{j=m_N+1}^{N}\frac{1}{j}
\le
C(\alpha,\beta)N^{2\gamma}\log N . \label{sacccv}
\end{eqnarray}
Using~\eqref{eq:weighted_H_count_split} together with~\eqref{rrankF1}, \eqref{sqe}, and~\eqref{sacccv}, we obtain
\[
\#\left\{k=1,\dots,N:\sigma_k\left(N^\gamma D_h^{1/2}H_N\right)>\varepsilon\right\}
\le
m_N
+
\frac{C(\alpha,\beta)}{\varepsilon^2}
N^{2\gamma}\log N .
\]
Dividing by $N$ and using~\eqref{mNnews}, we have
\[
\frac{\#\left\{k=1,\dots,N:\sigma_k\left(N^\gamma D_h^{1/2}H_N\right)>\varepsilon\right\}}{N}
\le
N^{\eta-1}
+
\frac{C(\alpha,\beta)}{\varepsilon^2}
N^{2\gamma-1}\log N .
\]
Since $\eta<1$ and $\gamma<\frac{1}{2}$, the right-hand side tends to zero.
Therefore,
\[
\lim_{N\to\infty}\frac{\#\left\{k=1,\dots,N:\sigma_k\left(N^\gamma D_h^{1/2}H_N\right)>\varepsilon\right\}}{N}
=0,
\]
for any $\varepsilon>0$. By Theorem~\ref{thmGaroni}, we conclude that
\[
\left\{N^\gamma D_h^{1/2}H_N\right\}_{N}\sim_{\mathrm{GLT}}
0.
\]
\end{proof}

The next result shows that, if the mesh is assumed in addition to be quasi-uniform (see also Remark \ref{two-sided ideal}), the conclusion of Theorem~\ref{thmStef} can be further improved.

\begin{corollary}\label{cor:scaled_H_zero_quasi_uniform}
Assume that $\alpha,\beta>\frac{1}{2}$ and that the mesh is quasi-uniform, namely
there exist constants $c_h,C_h>0$, independent of $N$, such that
\begin{equation}\label{eq:quasi_uniform_mesh}
\frac{c_h}{N}\le h_i\le \frac{C_h}{N},
\quad i=1,\ldots,N .
\end{equation}
Then, for any $\gamma<0$, the sequence
\[
\left\{N^\gamma H_N\right\}_N\sim_{\mathrm{GLT}}
0.
\]
\end{corollary}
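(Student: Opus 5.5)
The plan is to deduce the corollary from Theorem~\ref{thm:scaled_weighted_H_zero} by undoing the diagonal scaling $D_h^{1/2}$, using the quasi-uniformity assumption \eqref{eq:quasi_uniform_mesh}. Under this assumption we have the exact factorization
\[
N^\gamma H_N = \left(N^{-1/2} D_h^{-1/2}\right)\left(N^{\gamma+\frac12} D_h^{1/2} H_N\right).
\]
The left factor $N^{-1/2}D_h^{-1/2}=\operatorname{diag}\left((Nh_i)^{-1/2}\right)$ is diagonal. Its entries satisfy
\[
C_h^{-1/2}\le (Nh_i)^{-1/2}\le c_h^{-1/2},
\]
so its spectral norm is bounded by $c_h^{-1/2}$, uniformly in $N$.

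For the right factor, set $\gamma'=\gamma+\frac12$. Since $\gamma<0$, we have $\gamma'<\frac12$. Theorem~\ref{thm:scaled_weighted_H_zero} therefore applies with exponent $\gamma'$ and gives $\{N^{\gamma'}D_h^{1/2}H_N\}_N\sim_\sigma 0$.

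The remaining step is to show that multiplying a zero-distributed sequence on the left by a uniformly bounded sequence keeps it zero-distributed. I would argue directly through item 3) of Theorem~\ref{thmGaroni}. Write $N^{\gamma'}D_h^{1/2}H_N=R_N+E_N$ with $\operatorname{rank}(R_N)/N\to0$ and $\|E_N\|\to0$. Then
\[
N^\gamma H_N = N^{-1/2}D_h^{-1/2}R_N + N^{-1/2}D_h^{-1/2}E_N .
\]
The first term has rank at most $\operatorname{rank}(R_N)$. The second has spectral norm at most $c_h^{-1/2}\|E_N\|\to0$. Theorem~\ref{thmGaroni} then yields $\{N^\gamma H_N\}_N\sim_\sigma0$, and axiom \textbf{GLT 2} turns this into $\{N^\gamma H_N\}_N\sim_{\mathrm{GLT}}0$.

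An alternative route uses Remark~\ref{two-sided ideal}: a uniformly bounded sequence is sparsely unbounded, and zero-distributed sequences form an ideal in that algebra. The direct splitting argument above avoids needing the diagonal factor to be a GLT sequence, which would require mesh-regularity assumptions we do not want to impose.

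There is no real obstacle in this argument. The only points needing care are the bookkeeping of exponents, namely that $\gamma<0$ is exactly what makes $\gamma+\frac12<\frac12$, and using the \emph{lower} bound $h_i\ge c_h/N$ to control $D_h^{-1/2}$. The upper bound $h_i\le C_h/N$ is not needed for this direction.
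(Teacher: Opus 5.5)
Your proof is correct and follows the paper's argument. The paper uses the same factorization $N^\gamma H_N = S_N\,N^{\gamma+\frac12}D_h^{1/2}H_N$ with $S_N=N^{-1/2}D_h^{-1/2}$, bounds $\|S_N\|$ by $c_h^{-1/2}$ using only the lower mesh bound, and applies Theorem~\ref{thm:scaled_weighted_H_zero} with exponent $\gamma+\frac12<\frac12$. Your rank/norm splitting via item 3) of Theorem~\ref{thmGaroni} spells out the closing step, that a uniformly bounded sequence times a zero-distributed one stays zero-distributed, which the paper states in a single line.
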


\begin{proof}
By~\eqref{eq:matDh} and~\eqref{eq:quasi_uniform_mesh}, the diagonal matrices
\[
S_N=N^{-1/2}D_h^{-1/2}
\]
are uniformly bounded, since
\begin{equation}\label{bounded}
    \left\|S_N\right\|_2
=
N^{-1/2}\max_{1\le i\le N}h_i^{-1/2}
\le
\frac1{\sqrt{c_h}} .
\end{equation}
Moreover
\[
N^\gamma H_N
=
S_N N^{\gamma+\frac{1}{2}}D_h^{1/2}H_N.
\]
Since $\gamma<0$, we have
\[
\gamma+\frac{1}{2}<\frac{1}{2} .
\]
Hence, by Theorem~\ref{thm:scaled_weighted_H_zero}, we get
\[
\left\{N^{\gamma+\frac{1}{2}}D_h^{1/2}H_N\right\}_N\sim_{\mathrm{GLT}}0.
\]
Since by~\eqref{bounded} $\left\{S_N\right\}_N$ is uniformly bounded in spectral norm, we conclude
\[
\left\{N^\gamma H_N\right\}_N\sim_{\mathrm{GLT}}
0.
\]
\end{proof}

\begin{theorem}\label{thm:H_log_gamma_zero}
Assume that $\alpha,\beta>\frac{1}{2}$ and that the mesh is quasi-uniform. Then, for any fixed $\gamma>0$, we have
\[
\left\{\frac{H_N}{(\log N)^\gamma}\right\}_N\sim_{\mathrm{GLT}}
0.
\]
\end{theorem}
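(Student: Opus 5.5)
We repeat the argument of Theorem~\ref{thm:scaled_weighted_H_zero}, with the power $N^\gamma$ replaced by the logarithmic factor, and then remove $D_h^{1/2}$ using quasi-uniformity as in Corollary~\ref{cor:scaled_H_zero_quasi_uniform}. By \eqref{eq:quasi_uniform_mesh}, $D_h^{-1/2}=N^{1/2}S_N$ with $\|S_N\|\le c_h^{-1/2}$. Hence
\[
\frac{H_N}{(\log N)^\gamma}=S_N\,\frac{N^{1/2}D_h^{1/2}H_N}{(\log N)^\gamma},
\]
and it suffices to show that $\{N^{1/2}(\log N)^{-\gamma}D_h^{1/2}H_N\}_N\sim_\sigma 0$. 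Indeed, a uniformly bounded factor preserves zero-distribution: if $Z_N=R_N+E_N$ with $\operatorname{rank}(R_N)=o(N)$ and $\|E_N\|\to0$, then $S_NR_N$ and $S_NE_N$ still have these two properties, and Theorem~\ref{thmGaroni} applies. Equivalently, the singular values satisfy $\sigma_k(S_NA)\le\|S_N\|\sigma_k(A)$.

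A direct Frobenius count over all columns is not enough. Lemma~\ref{lem:H_weighted_column_bound} gives $\|F_N^{(2)}\|_F^2\lesssim N(\log N)^{-2\gamma}\sum_{j>m_N}j^{-1}$, and dividing by $\varepsilon^2N$ leaves $(\log N)^{-2\gamma}\log(N/m_N)$. Choosing $m_N=\lceil N^\eta\rceil$ yields $(1-\eta)(\log N)^{1-2\gamma}$, which tends to zero only when $\gamma>1/2$. I therefore plan the cut more carefully. Set $m_N=\lceil N/\phi(N)\rceil$ with $\phi(N)\to\infty$ slowly, so that the first $m_N$ columns form a block of rank $o(N)$. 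The tail columns then contribute at most
\[
\varepsilon^{-2}N(\log N)^{-2\gamma}C\log\phi(N)
\]
to the count of singular values exceeding $\varepsilon$, using the Weyl splitting \eqref{eq:rank_pert_count}. Taking $\phi(N)=e^{(\log N)^{\gamma}}$ (or $\phi(N)=\log N$, say) makes $\log\phi(N)=o((\log N)^{2\gamma})$. Then both contributions divided by $N$ tend to zero for every $\gamma>0$, and we need $N/\phi(N)\to\infty$, which holds.

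The key steps, in order, are as follows. First, apply the quasi-uniform reduction above. Second, split off the first $m_N=\lceil N/\phi(N)\rceil$ columns as a block of rank $o(N)$. Third, bound the Frobenius norm of the remaining columns with Lemma~\ref{lem:H_weighted_column_bound}, using
\[
\sum_{j=m_N+1}^N\frac1j\le\log\frac{N}{m_N}+O\!\left(\frac{1}{m_N}\right)\le\log\phi(N)+O(1).
\]
Fourth, combine these through the Weyl count and Theorem~\ref{thmGaroni}, and finally apply \textbf{GLT 2} to pass from zero-distribution to the GLT statement.

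The main obstacle is the third step. The harmonic tail must be estimated sharply as $\log(N/m_N)$ rather than $\log N$; the crude bound used in \eqref{sacccv} loses exactly the logarithm needed here. The cut-off must also be chosen so that the discarded block still has rank $o(N)$ while the logarithmic tail is beaten by $(\log N)^{2\gamma}$. If this balancing fails for small $\gamma$, a fallback is to use the rank part more aggressively: take $m_N=\lfloor \tau N\rfloor$ with a fixed $\tau\in(0,1)$, get a tail bound of order $\varepsilon^{-2}(\log N)^{-2\gamma}\log(1/\tau)$, which is $o(1)$, and then let $\tau\to0$. This gives a count of at most $\tau N+o(N)$ singular values exceeding $\varepsilon$, with $\tau$ arbitrary, and that suffices by characterization~2) of Theorem~\ref{thmGaroni}.
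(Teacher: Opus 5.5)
Your proposal is correct and is essentially the paper's proof. The paper uses quasi-uniformity to get $\|H_Ne_j\|_2^2\le C N/j$ from Lemma~\ref{lem:H_weighted_column_bound}, cuts at $m_N=\lfloor N/(\log N)^\eta\rfloor$ (your choice $\phi(N)=\log N$ is the case $\eta=1$), bounds the harmonic tail by $\log(N/m_N)=O(1+\log\log N)$, and then concludes with the Weyl count and Theorem~\ref{thmGaroni}. One small inaccuracy: the argument only needs $\phi(N)\to\infty$, not $N/\phi(N)\to\infty$; the latter actually fails for your alternative $\phi(N)=e^{(\log N)^\gamma}$ when $\gamma\ge 1$, but this does not affect the proof.
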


\begin{proof}
Since $D_h^{-1/2}$ is diagonal, the quasi-uniformity assumption gives
\[
\left\|D_h^{-1/2}\right\|_2^2
=
\max_{1\le i\le N} h_i^{-1}
\le
\frac{N}{c_h}.
\]
Hence, by Lemma~\ref{lem:H_weighted_column_bound}, for any $j=1,\ldots,N$, we obtain
\begin{equation}\label{eq:H_column_euclidean_bound}
\left\|H_Ne_j\right\|_2^2
\le
\left\|D_h^{-1/2}\right\|_2^2
\left\|\left(D_h^{1/2}H_N\right)e_j\right\|_2^2
\le
C(\alpha,\beta,c_h)\frac{N}{j}.
\end{equation}
Fix $\gamma>0$ and let $\eta>0$. We set
\begin{equation}\label{eq:mN_log_cutoff}
m_N=
\left\lfloor
\frac{N}{(\log N)^\eta}
\right\rfloor.
\end{equation}
We decompose
\[
\frac{H_N}{(\log N)^\gamma}
=
F_N^{(1)}+F_N^{(2)},
\]
where $F_N^{(1)}$ contains the first $m_N$ columns of
$\frac{H_N}{(\log N)^\gamma}$ and zeros elsewhere, while $F_N^{(2)}$ contains the remaining
columns. Then
\[
\operatorname{rank}\left(F_N^{(1)}\right)\le m_N,
\]
and by~\eqref{eq:mN_log_cutoff}, we have
\[
\lim_{N\to\infty}
\frac{\operatorname{rank}\left(F_N^{(1)}\right)}{N}
\le
\lim_{N\to\infty}\frac{1}{(\log N)^\eta}=0 .
\]
Let $\varepsilon>0$ be fixed.
Set
\[ r=\operatorname{rank}\left(F_N^{(1)}\right), \quad
s=
\#\left\{k=1,\dots,N:\sigma_k\left(F_N^{(2)}\right)>\varepsilon\right\}.
\]
By Weyl's inequality, we get
\[
\sigma_{r+s+1}\left(F_N^{(1)}+F_N^{(2)}\right)
\le
\sigma_{r+1}\left(F_N^{(1)}\right)
+
\sigma_{s+1}\left(F_N^{(2)}\right)
\le
\varepsilon .
\]
Then, we obtain
\begin{equation}\label{ultfCT}
    \#\left\{
k=1,\dots,N:\sigma_k\left(\frac{H_N}{(\log N)^\gamma}\right)>\varepsilon
\right\}
\le
r
+
s.
\end{equation}
On the other hand, we have
\[
\#\left\{k=1,\dots,N:\sigma_k\left(F_N^{(2)}\right)>\varepsilon\right\}
\le
\frac{\left\|F_N^{(2)}\right\|_F^2}{\varepsilon^2}.
\]
Since $F_N^{(2)}$ contains the columns $m_N+1,\ldots,N$ of
$\frac{H_N}{(\log N)^\gamma}$, by~\eqref{eq:H_column_euclidean_bound}, we get
\begin{equation}\label{bound:BN}
    \left\|F_N^{(2)}\right\|_F^2
=
\frac{1}{(\log N)^{2\gamma}}
\sum_{j=m_N+1}^{N}
\left\|H_Ne_j\right\|_2^2
\le
\frac{C(\alpha,\beta,c_h)N}{(\log N)^{2\gamma}}
\sum_{j=m_N+1}^{N}\frac{1}{j}.
\end{equation}
From~\eqref{eq:mN_log_cutoff}, for $N$
sufficiently large, we have
\[
m_N
\ge
\frac{1}{2}\frac{N}{(\log N)^\eta},
\]
and then
\[
\frac{N}{m_N}\le 2(\log N)^\eta.
\]
Since $x\mapsto \frac{1}{x}$ is decreasing, we have
\[
\sum_{j=m_N+1}^{N}\frac{1}{j}
\le
\int_{m_N}^{N}\frac{dx}{x}
=
\log\frac{N}{m_N}
\le
\log\left(2(\log N)^\eta\right)
\le
C(\eta)\left(1+\log\log N\right).
\]
Consequently, by~\eqref{bound:BN}, we get
\[
\left\|F_N^{(2)}\right\|_F^2
\le
C(\alpha,\beta,c_h,\eta)
N
\frac{1+\log\log N}{(\log N)^{2\gamma}}.
\]
Finally, by~\eqref{ultfCT}, we obtain
\begin{eqnarray*}
    &&\frac{1}{N}
\#\left\{
k=1,\dots,N:\sigma_k\left(\frac{H_N}{(\log N)^\gamma}\right)>\varepsilon
\right\}\\
&\le&
\frac{1}{(\log N)^\eta}
+
\frac{C(\alpha,\beta,c_h,\eta)}{\varepsilon^2}
\frac{1+\log\log N}{(\log N)^{2\gamma}}.
\end{eqnarray*}
Since $\eta>0$ and $\gamma>0$, the right-hand side tends to zero as
$N\to\infty$. Therefore
\[
\lim_{N\to\infty}
\frac{\#\left\{
k=1,\dots,N:\sigma_k\left(\frac{H_N}{(\log N)^\gamma}\right)>\varepsilon
\right\}}{N}
=0
\]
for any $\varepsilon>0$. By Theorem~\ref{thmGaroni}, we conclude that
\[
\left\{\frac{H_N}{(\log N)^\gamma}\right\}_N\sim_{\mathrm{GLT}} 0.
\]
\end{proof}

\begin{remark}
Under the additional assumption that the mesh is quasi-uniform,
Proposition~\ref{prop:quant_scaled_RN_zero} remains valid also in the case
\[
\alpha,\beta\in\left(0,\frac{1}{2}\right).
\]
More precisely, assume that there exist constants $c_h,C_h>0$,
independent of $N$, such that
\begin{equation}\label{ChN}
\frac{c_h}{N}\le h_i\le \frac{C_h}{N},
\quad i=1,\dots,N.
\end{equation}
Indeed, by~\eqref{eq:rjtilde} and~\eqref{eq:weighted_poly_bound}, we have
\begin{eqnarray*}
\left|\widetilde r_j(x)\right|^2
&=&
\frac{4}{\left(j+\alpha+\beta+1\right)^2}
\left|P_j^{(\alpha,\beta)}(x)\omega_{\alpha,\beta}(x)\right|^2\\
&\le&
\frac{4}{\left(j+\alpha+\beta+1\right)^2}
K_j C_0(\alpha,\beta)
\left(1-x\right)^{\alpha-\frac{1}{2}}
\left(1+x\right)^{\beta-\frac{1}{2}}.
\end{eqnarray*}
Using also~\eqref{eq:Kj_bound}, and recalling that $\alpha$ and $\beta$ are
fixed, we obtain
\begin{equation}\label{eq121}
\left|\widetilde r_j(x)\right|^2
\le
\frac{C(\alpha,\beta)}{\left(j+1\right)^3}
\omega_{\alpha-\frac{1}{2},\beta-\frac{1}{2}}(x),
\quad x\in\left(-1,1\right).
\end{equation}
Since
\[
\alpha-\frac{1}{2}>-1,
\quad
\beta-\frac{1}{2}>-1,
\]
we have
\[
\omega_{\alpha-\frac{1}{2},\beta-\frac{1}{2}}\in L^1\left(-1,1\right).
\]
On the other hand, $\omega_{\alpha-\frac{1}{2},\beta-\frac{1}{2}}$ is not bounded at the endpoints. Therefore, in order to
estimate the discrete sum
\[
\sum_{k=1}^{N-1}\omega_{\alpha-\frac{1}{2},\beta-\frac{1}{2}}\left(x_k\right),
\]
we need to exploit the quasi-uniformity of the mesh. A direct computation gives
\[
\omega_{\alpha-\frac{1}{2},\beta-\frac{1}{2}}'\left(x\right)
=
\left(1-x\right)^{\alpha-\frac{3}{2}}
\left(1+x\right)^{\beta-\frac{3}{2}}
\left[
\left(\beta-\alpha\right)+\left(1-\alpha-\beta\right)x
\right].
\]
Since
\[
1-\alpha-\beta>0,
\]
the function $\omega_{\alpha-\frac{1}{2},\beta-\frac{1}{2}}$ has a unique critical point
\[
x^*=
\frac{\alpha-\beta}{1-\alpha-\beta}\in(-1,1).
\]
Therefore $\omega_{\alpha-\frac{1}{2},\beta-\frac{1}{2}}$ is decreasing on
\[
\left(-1,x^*\right]
\]
and increasing on
\[
\left[x^*,1\right).
\]
Then, for $N$ sufficiently large, there exists
$k^*\in\left\{1,\dots,N-2\right\}$ such that
\[
x_{k^*}\le x^*<x_{k^*+1}.
\]
Then
\begin{eqnarray*}
\sum_{k=1}^{N-1}\omega_{\alpha-\frac{1}{2},\beta-\frac{1}{2}}\left(x_k\right)
&=&
\omega_{\alpha-\frac{1}{2},\beta-\frac{1}{2}}\left(x_1\right)
+
\sum_{k=2}^{k^*}\omega_{\alpha-\frac{1}{2},\beta-\frac{1}{2}}\left(x_k\right)\\
&+&
\sum_{k=k^*+1}^{N-2}\omega_{\alpha-\frac{1}{2},\beta-\frac{1}{2}}\left(x_k\right)
+
\omega_{\alpha-\frac{1}{2},\beta-\frac{1}{2}}\left(x_{N-1}\right).
\end{eqnarray*}
We first estimate the two boundary terms. Since $x_0=-1$ and $x_N=1$, using
\[
h_1=x_1-x_0\le \frac{C_h}{N},
\quad
h_N=x_{N}-x_{N-1}\le \frac{C_h}{N},
\]
we have
\[
1-x_1=2-h_1\ge 2-\frac{C_h}{N},
\quad
1+x_{N-1}=2-h_N\ge 2-\frac{C_h}{N}.
\]
Hence, since
\[
\alpha-\frac{1}{2}<0, \quad \beta-\frac{1}{2}<0,
\]
we have
\[
\left(1-x_1\right)^{\alpha-\frac{1}{2}}\le C,
\quad
\left(1+x_{N-1}\right)^{\beta-\frac{1}{2}}\le C,
\]
with $C$ independent of $N$. Using also~\eqref{ChN}, we get
\begin{equation}\label{bound0}
\omega_{\alpha-\frac{1}{2},\beta-\frac{1}{2}}\left(x_1\right)
=
\left(1-x_1\right)^{\alpha-\frac{1}{2}}
\left(1+x_1\right)^{\beta-\frac{1}{2}}
\le
C h_1^{\beta-\frac{1}{2}}
\le
C N^{\frac{1}{2}-\beta}.
\end{equation}
Similarly, we have
\begin{eqnarray}\notag
\omega_{\alpha-\frac{1}{2},\beta-\frac{1}{2}}\left(x_{N-1}\right)
&=&
\left(1-x_{N-1}\right)^{\alpha-\frac{1}{2}}
\left(1+x_{N-1}\right)^{\beta-\frac{1}{2}}
\\&\le&
C h_N^{\alpha-\frac{1}{2}}
\le
C N^{\frac{1}{2}-\alpha}.\label{bound1}
\end{eqnarray}
Now, we consider the interior sums. Since $\omega_{\alpha-\frac{1}{2},\beta-\frac{1}{2}}$ is decreasing on $\left(-1,x^*\right],$ for any $k=2,\dots,k^*$ and any $t\in\left[x_{k-1},x_k\right]$,
we have
\[
\omega_{\alpha-\frac{1}{2},\beta-\frac{1}{2}}\left(x_k\right)\le \omega_{\alpha-\frac{1}{2},\beta-\frac{1}{2}}\left(t\right).
\]
Hence, integrating on $\left[x_{k-1},x_k\right]$, we get
\[
h_k \omega_{\alpha-\frac{1}{2},\beta-\frac{1}{2}}\left(x_k\right)
\le
\int_{x_{k-1}}^{x_k} \omega_{\alpha-\frac{1}{2},\beta-\frac{1}{2}}\left(t\right)dt.
\]
By~\eqref{ChN}, it follows that
\[
\omega_{\alpha-\frac{1}{2},\beta-\frac{1}{2}}\left(x_k\right)
\le
\frac{N}{c_h}
\int_{x_{k-1}}^{x_k} \omega_{\alpha-\frac{1}{2},\beta-\frac{1}{2}}\left(t\right)dt.
\]
Summing over $k=2,\dots,k^*$, we obtain
\begin{equation}\label{bound2}
\sum_{k=2}^{k^*}\omega_{\alpha-\frac{1}{2},\beta-\frac{1}{2}}\left(x_k\right)
\le
\frac{N}{c_h}
\int_{-1}^{1} \omega_{\alpha-\frac{1}{2},\beta-\frac{1}{2}}\left(t\right)dt.
\end{equation}
On the other hand, $\omega_{\alpha-\frac{1}{2},\beta-\frac{1}{2}}$ is increasing on $\left[x^*,1\right).$
Therefore, for any $k=k^*+1,\dots,N-2$ and any $t\in\left[x_k,x_{k+1}\right],$ we have
\[
\omega_{\alpha-\frac{1}{2},\beta-\frac{1}{2}}\left(x_k\right)\le \omega_{\alpha-\frac{1}{2},\beta-\frac{1}{2}}\left(t\right).
\]
Thus
\[
h_{k+1} \omega_{\alpha-\frac{1}{2},\beta-\frac{1}{2}}\left(x_k\right)
\le
\int_{x_k}^{x_{k+1}} \omega_{\alpha-\frac{1}{2},\beta-\frac{1}{2}}\left(t\right)dt,
\]
and, using again~\eqref{ChN}, we get
\[
\omega_{\alpha-\frac{1}{2},\beta-\frac{1}{2}}\left(x_k\right)
\le
\frac{N}{c_h}
\int_{x_k}^{x_{k+1}} \omega_{\alpha-\frac{1}{2},\beta-\frac{1}{2}}\left(t\right)dt.
\]
Summing over $k=k^*+1,\dots,N-2$, we obtain
\begin{equation}\label{bound3}
\sum_{k=k^*+1}^{N-2}\omega_{\alpha-\frac{1}{2},\beta-\frac{1}{2}}\left(x_k\right)
\le
\frac{N}{c_h}
\int_{-1}^{1} \omega_{\alpha-\frac{1}{2},\beta-\frac{1}{2}}\left(t\right)dt.
\end{equation}
Combining~\eqref{bound0},~\eqref{bound1},~\eqref{bound2}, and~\eqref{bound3},
we conclude that
\[
\sum_{k=1}^{N-1}\omega_{\alpha-\frac{1}{2},\beta-\frac{1}{2}}\left(x_k\right)
\le
C N.
\]
Finally, since
\[
\widetilde r_j\left(-1\right)=\widetilde r_j\left(1\right)=0,
\]
by~\eqref{eq121} we obtain for $j=1,\dots,N$
\begin{eqnarray*}
\left\|r_j^{(N)}\right\|^2
=
\sum_{k=1}^{N-1}\left|\widetilde r_j\left(x_k\right)\right|^2
&\le&
\frac{C(\alpha,\beta)}{\left(j+1\right)^3}
\sum_{k=1}^{N-1}\omega_{\alpha-\frac{1}{2},\beta-\frac{1}{2}}\left(x_k\right)\\
&\le&
C_1(\alpha,\beta)\frac{N}{\left(j+1\right)^3}.
\end{eqnarray*}
This is the estimate needed in the proofs of
Proposition~\ref{prop:quant_scaled_RN_zero}. Hence, once this bound has been
established, the same strategy applies.
\end{remark}

\subsection{Stability estimates}
Now we study the stability of the weighted Jacobi histopolation operator. More precisely, we derive bounds that hold for arbitrary meshes and for any discretization level $N$. Let $X_N=\left\{x_0,\dots,x_N\right\}$ be the node set satisfying~\eqref{nodecond}, and let $\omega_{\alpha,\beta}$ be the Jacobi weight defined in~\eqref{jacweight}. In the following, we assume that
\[
\alpha,\beta>-1/2.
\]
For the purposes of this section, we express the polynomial histopolant~\eqref{pol_hist} in the Jacobi basis $\left\{P_{j-1}^{(\alpha,\beta)}\right\}_{j=1}^N$, instead of the basis used in the previous section. Thus, we write
\begin{equation}\label{polexp}
    p_{N-1}(x)=\sum_{j=1}^{N}c_j P_{j-1}^{(\alpha,\beta)}(x),
\quad
\boldsymbol{c}=\left[c_1,\dots,c_N\right]^\top\in\mathbb{R}^N.
\end{equation}
With a slight abuse of notation, we denote by $H_N\in\mathbb{R}^{N\times N}$ the histopolation matrix corresponding to this choice of basis, namely
\begin{equation*}
    \left[H_N\right]_{i,j}=\frac{1}{h_i}\int_{s_i} P_{j-1}^{(\alpha,\beta)}(x) \omega_{\alpha,\beta}(x)dx,
\quad s_i=\left[x_{i-1},x_i\right], \quad i,j=1,\dots,N.
\end{equation*}
For any $\boldsymbol v=[v_1,\dots,v_N]^\top\in\mathbb{R}^N$, we introduce the mesh-weighted discrete norm
\begin{equation}\label{eq:weigh_norm}
    \|\boldsymbol v\|_h^2=\sum_{i=1}^N h_i \left|v_i\right|^2.
\end{equation}
Finally, we define the Gram matrix $\widetilde G\in\mathbb{R}^{N\times N}$ by
\begin{equation}\label{eq:matGtilde}
    \left[\widetilde G\right]_{\ell,k}
=
\int_{-1}^1
P_{\ell-1}^{(\alpha,\beta)}(x)
P_{k-1}^{(\alpha,\beta)}(x)
\left(\omega_{\alpha,\beta}(x)\right)^2dx,
\quad \ell,k=1,\dots,N .
\end{equation}

\begin{remark}
The matrix $\widetilde G$ is well defined. Indeed, since
\[
\left(\omega_{\alpha,\beta}(x)\right)^2=(1-x)^{2\alpha}(1+x)^{2\beta},
\]
and $\alpha,\beta>-\frac{1}{2}$, we have
\[
2\alpha>-1,
\quad
2\beta>-1.
\]
Therefore $\left(\omega_{\alpha,\beta}(x)\right)^2\in L^1(-1,1)$, and all the entries of $\widetilde G$ are finite. Moreover, $\widetilde G$ is the Gram matrix of the linearly independent family
\[
\left\{P_{j-1}^{(\alpha,\beta)}\right\}_{j=1}^N.
\]
Consequently, $\widetilde G$ is symmetric positive definite.
\end{remark}

The next result provides the basic stability estimate for the histopolation matrix in the mesh-weighted discrete norm.
\begin{theorem}
\label{thm:uniform_interior_stability}
For any $\boldsymbol c\in\mathbb R^N$, the following inequality holds
\begin{equation*}
    \left\|H_N \boldsymbol{c}\right\|_{h}^2
\le \|\boldsymbol{c}\|_{2}^2
\lambda_{\max}\left(\widetilde G\right),
\end{equation*}
where  $\lambda_{\max}\left(\widetilde G\right)$ denotes the maximum eigenvalue of
$\widetilde G$. In particular
\[
\left\|H_N \boldsymbol{c}\right\|_{h}
\le \|\boldsymbol{c}\|_{2}
\sqrt{\lambda_{\max}\left(\widetilde G\right)}.
\]
\end{theorem}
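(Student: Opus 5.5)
The plan is to write each entry of $H_N\boldsymbol c$ as a cell average of a single function and apply Cauchy--Schwarz cellwise, exactly as in the proof of Lemma~\ref{lem:H_weighted_column_bound}. The only difference is that we treat a general coefficient vector rather than a single column. Given $\boldsymbol c\in\mathbb R^N$, let $p_{N-1}$ be the polynomial \eqref{polexp} and set $q(x)=p_{N-1}(x)\omega_{\alpha,\beta}(x)$. By linearity of the integral,
\[
\left[H_N\boldsymbol c\right]_i=\frac{1}{h_i}\int_{s_i} q(x)\,dx,\quad i=1,\dots,N.
\]

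First step. By Cauchy--Schwarz on $s_i$,
\[
\left|\int_{s_i}q\,dx\right|^2\le h_i\int_{s_i}|q|^2dx,
\qquad\text{hence}\qquad
h_i\left|[H_N\boldsymbol c]_i\right|^2\le\int_{s_i}|q(x)|^2dx.
\]
Since the cells $s_i$ cover $[-1,1]$ and overlap only at endpoints, summing over $i$ gives
\[
\|H_N\boldsymbol c\|_h^2\le\int_{-1}^1 |p_{N-1}(x)|^2\left(\omega_{\alpha,\beta}(x)\right)^2dx .
\]
This integral is finite because $\omega_{\alpha,\beta}^2\in L^1(-1,1)$ when $\alpha,\beta>-\frac12$, as noted in the remark preceding the theorem.

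Second step. Expand the square using \eqref{polexp}. The right-hand side equals
\[
\sum_{\ell,k=1}^N c_\ell c_k\int_{-1}^1 P_{\ell-1}^{(\alpha,\beta)}P_{k-1}^{(\alpha,\beta)}\omega_{\alpha,\beta}^2\,dx=\boldsymbol c^\top\widetilde G\boldsymbol c,
\]
by definition \eqref{eq:matGtilde}. Since $\widetilde G$ is symmetric positive definite, the Rayleigh quotient bound gives
\[
\boldsymbol c^\top\widetilde G\boldsymbol c\le\lambda_{\max}(\widetilde G)\|\boldsymbol c\|_2^2 .
\]
Taking square roots yields the second inequality.

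No step is a real obstacle. The only point requiring care is justifying the integrability of $|q|^2$, which the preceding remark already settles. The bound holds for arbitrary meshes because Cauchy--Schwarz is applied cell by cell, with no mesh-regularity assumption.
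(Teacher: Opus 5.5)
Your proposal is correct and follows essentially the same route as the paper's proof. Both apply Cauchy--Schwarz cell by cell to obtain $\|H_N\boldsymbol c\|_h^2\le\int_{-1}^1|p_{N-1}|^2\omega_{\alpha,\beta}^2\,dx=\boldsymbol c^\top\widetilde G\boldsymbol c$, and then conclude with the Rayleigh quotient bound $\boldsymbol c^\top\widetilde G\boldsymbol c\le\lambda_{\max}(\widetilde G)\|\boldsymbol c\|_2^2$.
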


\begin{proof}
Let
\begin{equation*}
    b_i=\left[H_N \boldsymbol{c}\right]_i=\frac{1}{h_i}\int_{s_i} p_{N-1}(x)\omega_{\alpha,\beta}(x)dx,
\quad i=1,\dots,N,
\end{equation*}
and set $\boldsymbol{b}=\left[b_1,\dots,b_N\right]^\top$. Multiplying by $h_i$, we get
\[
h_i b_i=\int_{s_i} p_{N-1}(x)\omega_{\alpha,\beta}(x)dx.
\]
Hence, by the Cauchy--Schwarz inequality
\begin{eqnarray*}
    \left|h_i b_i\right|^2
&=&
\left|\int_{s_i} p_{N-1}(x)\omega_{\alpha,\beta}(x)dx\right|^2\\
&\le& \left(\int_{s_i}1^2dx\right)
\left(\int_{s_i}\left|p_{N-1}(x)\right|^2\left(\omega_{\alpha,\beta}(x)\right)^2dx\right)\\&=&h_i\int_{s_i}\left|p_{N-1}(x)\right|^2\left(\omega_{\alpha,\beta}(x)\right)^2dx.
\end{eqnarray*}
Dividing by $h_i$, we have
\[
h_i\left|b_i\right|^2
\le
\int_{s_i}\left|p_{N-1}(x)\right|^2\left(\omega_{\alpha,\beta}(x)\right)^2dx,
\quad i=1,\dots,N.
\]
Since the cells $\left\{s_i\right\}_{i=1}^N$ form a partition of $[-1,1]$, summing over $i=1,\dots,N$, we obtain
\begin{eqnarray*}
    \sum_{i=1}^{N} h_i\left|b_i\right|^2
&\le&
\sum_{i=1}^{N}\int_{s_i}\left|p_{N-1}(x)\right|^2\left(\omega_{\alpha,\beta}(x)\right)^2dx\\ &=&\int_{-1}^{1}\left|p_{N-1}(x)\right|^2\left(\omega_{\alpha,\beta}(x)\right)^2dx.
\end{eqnarray*}
Therefore, by~\eqref{eq:weigh_norm}, we get
\begin{equation}\label{kq1_new}
\left\|H_N \boldsymbol{c}\right\|_{h}^2
=
\left\|\boldsymbol{b}\right\|_{h}^2
\le
\int_{-1}^{1}\left|p_{N-1}(x)\right|^2\left(\omega_{\alpha,\beta}(x)\right)^2dx.
\end{equation}
Using~\eqref{polexp} and the definition of $\widetilde G$, we have
\begin{equation}\label{kq2_new}
\int_{-1}^{1}\left|p_{N-1}(x)\right|^2\left(\omega_{\alpha,\beta}(x)\right)^2dx
=
\boldsymbol{c}^\top \widetilde G \boldsymbol{c}.
\end{equation}
Hence, combining~\eqref{kq1_new} and~\eqref{kq2_new}, we get
\begin{equation*}
    \left\|H_N \boldsymbol{c}\right\|_{h}^2
\le
\boldsymbol{c}^\top \widetilde G \boldsymbol{c}.
\end{equation*}
Since $\widetilde G$ is symmetric positive definite, the Rayleigh quotient yields
\begin{equation*}
\boldsymbol{c}^\top \widetilde G\boldsymbol{c}
\le\|\boldsymbol{c}\|_{2}^2
\lambda_{\max}\left(\widetilde G\right).
\end{equation*}
\end{proof}

\begin{remark}
By Theorem~\ref{thm:uniform_interior_stability}, we have
\begin{equation*}
    \left\|H_N \boldsymbol c\right\|_h^2 \le \boldsymbol c^\top \widetilde G \boldsymbol c,
\quad \boldsymbol c\in\mathbb R^N.
\end{equation*}
This relation is equivalent to
\[
H_N^\top D_h H_N \preceq \widetilde G,
\]
where $D_h\in \mathbb{R}^{N\times N}$ is defined as
\begin{equation*}
D_h=\operatorname{diag}\left(h_1,\ldots,h_N\right),
\quad
h_i=x_i-x_{i-1}.
\end{equation*}
With this definition, the mesh-weighted discrete norm in~\eqref{eq:weigh_norm}
can be written as
\[
\|\boldsymbol{v}\|_h^2=\boldsymbol{v}^{\top}D_h \boldsymbol{v}.
\]
Hence, for any $\boldsymbol c\in\mathbb R^N$, we have
\[
\left\|H_N \boldsymbol c\right\|_h^2
=
\left(H_N\boldsymbol c\right)^\top D_h \left(H_N\boldsymbol c\right)
=
\boldsymbol c^\top H_N^\top D_h H_N \boldsymbol c.
\]
Therefore
\[
\left\|H_N \boldsymbol c\right\|_h^2 \le \boldsymbol c^\top \widetilde G \boldsymbol c
\quad \text{for any } \boldsymbol c\in\mathbb R^N
\]
if and only if
\[
\boldsymbol c^\top H_N^\top D_h H_N \boldsymbol c
\le
\boldsymbol c^\top \widetilde G \boldsymbol c
\quad \text{for any } \boldsymbol c\in\mathbb R^N.
\]
Since both $H_N^\top D_h H_N$ and $\widetilde G$ are symmetric, this is equivalent to
\[
H_N^\top D_h H_N \preceq \widetilde G.
\]
\end{remark}
In order to study $\lambda_{\max}\left(\widetilde G\right)$, we now write
\begin{equation*}
    \widetilde G_N := \widetilde G \in \mathbb{R}^{N\times N}.
\end{equation*}

\begin{theorem}
\label{prop:lmax_Gtilde}
There exists a constant $C(\alpha,\beta)>0$, depending only on $\alpha$ and $\beta$, such that
\[
\lambda_{\max}\left(\widetilde G_N\right)
\le
C(\alpha,\beta)\left(1+\log N\right),
\quad N\ge1.
\]
\end{theorem}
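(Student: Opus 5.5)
The idea is to bound the largest eigenvalue by the trace. Since $\widetilde G_N$ is symmetric positive definite, all its eigenvalues are positive, so
\[
\lambda_{\max}\left(\widetilde G_N\right)\le \operatorname{tr}\left(\widetilde G_N\right)=\sum_{j=1}^{N}\int_{-1}^{1}\left|P_{j-1}^{(\alpha,\beta)}(x)\right|^2\left(\omega_{\alpha,\beta}(x)\right)^2dx .
\]
Equivalently, one can apply Cauchy--Schwarz pointwise to $p_{N-1}=\sum_j c_jP_{j-1}^{(\alpha,\beta)}$ in the Rayleigh quotient; this gives the same sum. It therefore suffices to show that every diagonal entry satisfies $[\widetilde G_N]_{j,j}\le C(\alpha,\beta)/j$. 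Summing over $j$ then gives the bound $C(\alpha,\beta)\sum_{j=1}^N j^{-1}\le C(\alpha,\beta)(1+\log N)$.

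To bound a diagonal entry, I would normalize as in~\eqref{ortpol}, writing $P_{j-1}^{(\alpha,\beta)}=K_{j-1}^{1/2}\widehat P_{j-1}^{(\alpha,\beta)}$. Then I would invoke the uniform estimate for orthonormal Jacobi polynomials from~\cite{Nevai:1994:GJW}, already used in Lemma~\ref{lem:weighted_jacobi_uniform_bound}:
\[
\left|\widehat P_{j-1}^{(\alpha,\beta)}(x)\right|^2\le C_0(\alpha,\beta)(1-x)^{-\alpha-\frac12}(1+x)^{-\beta-\frac12}.
\]
Multiplying by $(\omega_{\alpha,\beta})^2=(1-x)^{2\alpha}(1+x)^{2\beta}$ leaves the integrand bounded by $C_0(\alpha,\beta)\,\omega_{\alpha-\frac12,\beta-\frac12}(x)$. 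This function lies in $L^1(-1,1)$ precisely because $\alpha,\beta>-\frac12$, which is the standing assumption of this subsection. Hence
\[
[\widetilde G_N]_{j,j}\le C_0(\alpha,\beta)\,\|\omega_{\alpha-\frac12,\beta-\frac12}\|_{L^1}\,K_{j-1}.
\]
By~\eqref{eq:Kj_bound}, $K_{j-1}\le C_1(\alpha,\beta)/j$, which is exactly the required $1/j$ decay.

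The only delicate point is the validity of the Nevai-type bound for all $\alpha,\beta>-\frac12$. The estimate of~\cite{Nevai:1994:GJW} is stated for $\alpha,\beta\ge-\frac12$, so it covers our range, but I would double-check that the constant $C_0$ is uniform in $j$ in that regime. If any doubt remained, I would fall back on the classical asymptotic bounds for Jacobi polynomials near the endpoints. These are $|P_j^{(\alpha,\beta)}(x)|\lesssim j^{\alpha}$ for $1-x\le j^{-2}$, and $|P_j^{(\alpha,\beta)}(x)|\lesssim j^{-1/2}(1-x)^{-\alpha/2-1/4}$ otherwise, with the symmetric statements at $-1$. One would then split the integral near each endpoint and check that every piece is $O(1/j)$. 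The rest of the argument, namely the trace bound and the harmonic sum, is routine.
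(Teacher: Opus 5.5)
Your proposal is correct and follows essentially the same route as the paper. Both bound $\lambda_{\max}(\widetilde G_N)$ by the trace, control each diagonal entry by a constant times $K_{j-1}\le C(\alpha,\beta)/j$ using the Erd\'elyi--Magnus--Nevai bound (whose constant is indeed independent of $j$ for $\alpha,\beta\ge -\frac{1}{2}$) together with the integrability of $\omega_{\alpha-\frac{1}{2},\beta-\frac{1}{2}}$, and then sum the harmonic series, so your fallback via endpoint asymptotics is not needed.
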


\begin{proof}
Since $\widetilde G_N$ is symmetric positive definite,  by~\eqref{eq:matGtilde}, we have
\begin{equation}\label{matGtrace}
    \lambda_{\max}\left(\widetilde G_N\right)
\le
\operatorname{tr}\left(\widetilde G_N\right)
=
\sum_{j=0}^{N-1}
\int_{-1}^{1}
\left|P_j^{(\alpha,\beta)}(x)\right|^2
\left(\omega_{\alpha,\beta}(x)\right)^2dx.
\end{equation}
Therefore, it is enough to prove that there exists a constant $C(\alpha,\beta)>0$ such that
\begin{equation}\label{eq:diag_estimate_Gtilde}
\int_{-1}^{1}
\left|P_j^{(\alpha,\beta)}(x)\right|^2
\left(\omega_{\alpha,\beta}(x)\right)^2dx
\le
\frac{C(\alpha,\beta)}{j+1},
\quad j\ge0.
\end{equation}
Since $\alpha,\beta>-1/2$, the weighted estimate for orthonormal Jacobi
polynomials proved in~\cite{Nevai:1994:GJW}, applied to the family
$\left\{\widehat P_j^{(\alpha,\beta)}\right\}_{j\ge0}$ defined in~\eqref{ortpol}, leads to
\begin{equation*}
(1-x)^{\alpha+\frac{1}{2}}(1+x)^{\beta+\frac{1}{2}}
\left|\widehat{P}_j^{(\alpha,\beta)}(x)\right|^2
\le
C_2(\alpha,\beta),
\quad x\in(-1,1),\quad j\ge0,
\end{equation*}
where
\[
C_2(\alpha,\beta)=\frac{2e\left(2+\sqrt{\alpha^2+\beta^2}\right)}{\pi}.
\]
Hence
\[
\left|\widehat{P}_j^{(\alpha,\beta)}(x)\right|^2
\le C_2(\alpha,\beta)
(1-x)^{-\alpha-\frac{1}{2}}(1+x)^{-\beta-\frac{1}{2}}.
\]
Multiplying the previous inequality by $\left(\omega_{\alpha,\beta}(x)\right)^2$, we obtain
\[
\left|\widehat{P}_j^{(\alpha,\beta)}(x)\right|^2\left(\omega_{\alpha,\beta}(x)\right)^2
\le
C_2(\alpha,\beta)
(1-x)^{\alpha-\frac{1}{2}}(1+x)^{\beta-\frac{1}{2}},
\]
for any $x\in(-1,1)$, and $j\ge0$.
Since $\alpha,\beta>-1/2$, we have
\[
\alpha-\frac{1}{2}>-1,
\quad
\beta-\frac{1}{2}>-1,
\]
and hence
\[
(1-x)^{\alpha-\frac{1}{2}}(1+x)^{\beta-\frac{1}{2}}\in L^1(-1,1).
\]
Therefore, there exists a constant $C_3(\alpha,\beta)>0$ such that
\[
\int_{-1}^{1}
\left|\widehat{P}_j^{(\alpha,\beta)}(x)\right|^2
\left(\omega_{\alpha,\beta}(x)\right)^2dx\le
C_3(\alpha,\beta),
\]
for any $j\ge0.$
Then, using~\eqref{ortpol} and~\eqref{eq:Kj_bound}, we get
\begin{eqnarray*}
    \int_{-1}^{1}
\left|P_j^{(\alpha,\beta)}(x)\right|^2
\left(\omega_{\alpha,\beta}(x)\right)^2dx
&=&
K_j
\int_{-1}^{1}
\left|\widehat{P}^{(\alpha,\beta)}_j(x)\right|^2\left(\omega_{\alpha,\beta}(x)\right)^2dx\\
&\le&
\frac{C(\alpha,\beta)}{j+1},
\quad j\ge0,
\end{eqnarray*}
which proves~\eqref{eq:diag_estimate_Gtilde}.

Since the function $x\mapsto 1/x$ is decreasing on $[1,+\infty)$, we have
\[
\sum_{j=0}^{N-1}\frac{1}{j+1}
=
\sum_{k=1}^{N}\frac{1}{k}
=
1+\sum_{k=2}^{N}\frac{1}{k}
\le
1+\int_{1}^{N}\frac{1}{x}dx
=
1+\log N.
\]
Thus, by~\eqref{matGtrace} and~\eqref{eq:diag_estimate_Gtilde}, we obtain
\[
\lambda_{\max}\left(\widetilde G_N\right)
\le
\operatorname{tr}\left(\widetilde G_N\right)
\le
C(\alpha,\beta)\sum_{j=0}^{N-1}\frac{1}{j+1}
\le
C(\alpha,\beta)\left(1+\log N\right).
\]
The proof is complete.
\end{proof}

\begin{remark}
The previous result immediately leads to a bound for the operator norm of the weighted histopolation matrix $H_N$, viewed as a linear map
\[
H_N : \left(\mathbb{R}^N,\|\cdot\|_2\right) \to \left(\mathbb{R}^N,\|\cdot\|_{h}\right),
\]
where $\|\cdot\|_{h}$ is the norm defined in~\eqref{eq:weigh_norm}. Indeed, by Theorem~\ref{thm:uniform_interior_stability}, we have
\[
\left\|H_N\right\|_{2\to h}
=
\sup_{\boldsymbol{c}\neq 0}
\frac{\left\|H_N\boldsymbol{c}\right\|_{h}}{\|\boldsymbol{c}\|_2}
\le
\sqrt{\lambda_{\max}\left(\widetilde G_N\right)}.
\]
Combining this with Theorem~\ref{prop:lmax_Gtilde}, we obtain
\[
\left\|H_N\right\|_{2\to h}
\le
\sqrt{C(\alpha,\beta)\left(1+\log N\right)}.
\]
 In particular, the operator norm of $H_N$ grows at most logarithmically.
\end{remark}

\section{Numerical results}\label{sec5}

In this section, we present numerical experiments supporting the asymptotic
analysis developed in the previous sections. We first study the decay,
under different normalizations, of the proportion of singular values larger
than a prescribed threshold. We then consider the unscaled histopolation
matrices, and finally compare the singular values of the nontrivial factors
appearing in the GLT analysis with the corresponding symbols, which have been deduced in our theoretical analysis.

All computations are performed in MATLAB. The entries of the histopolation
matrices are computed by Gauss--Legendre quadrature, using the quadrature
nodes and weights generated by the command \texttt{legpts}.

We start with the uniform partition of $[-1,1]$, with
\[
N=1000k, \quad k=1,\ldots,10.
\]
For each $N$, we compute the singular values of $H_N$ and consider the four
scaled matrices
\[
H_N^{(1)}=\frac{H_N}{N}, \quad
H_N^{(2)}=\frac{H_N}{N^{9/10}}, \quad
H_N^{(3)}=\frac{H_N}{N^{4/5}}, \quad
H_N^{(4)}=\frac{H_N}{(\log N)^4}.
\]
For a fixed threshold $\varepsilon>0$, we define
\[
q_N^{(i)}(\varepsilon)
=
\frac{\#\left\{j:\sigma_j\left(H_N^{(i)}\right)>\varepsilon\right\}}{N}, \quad  i=1,\ldots,4.
\]
Thus, $q_N^{(i)}(\varepsilon)$ measures the fraction of singular values of the
$i$-th scaled matrix which remain larger than $\varepsilon$. In the
experiments we use the thresholds
\[
\varepsilon=10^{-2}, \qquad
\varepsilon=5\cdot 10^{-3}, \qquad
\varepsilon=10^{-3}.
\]
We first consider the symmetric case $\alpha=\beta=2$. The results are
reported in Figures~\ref{fig:alpha2beta2_first}
and~\ref{fig:alpha2beta2_second}. In all four scalings, and for all the
thresholds considered, the quantity $q_N^{(i)}(\varepsilon)$ decreases as
$N$ increases. The decay rate depends on the scaling, while the overall
behaviour is consistent with the zero-distribution result proved above.
\begin{figure}[t]
\centering
\includegraphics[width=0.49\textwidth]{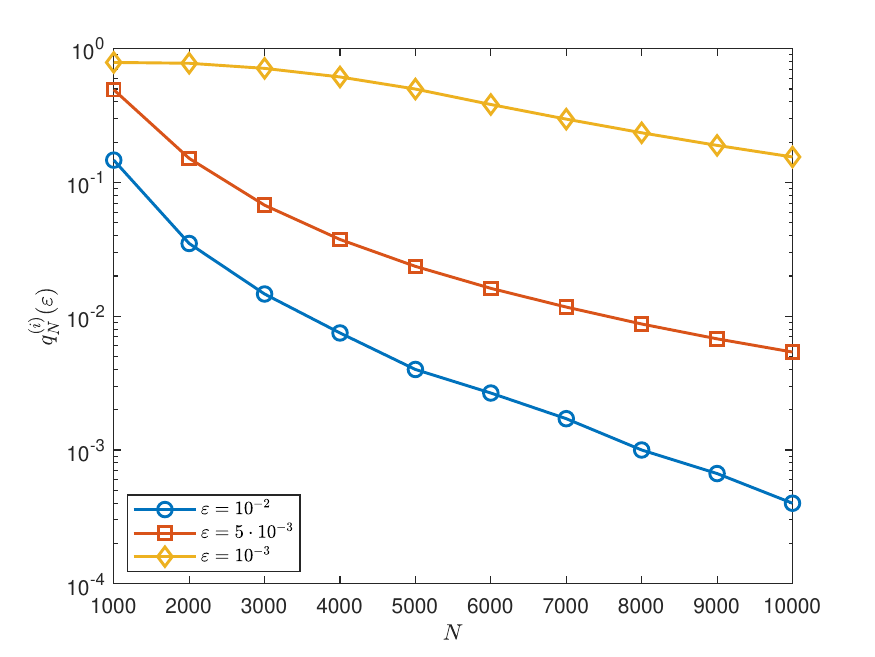}
\includegraphics[width=0.49\textwidth]{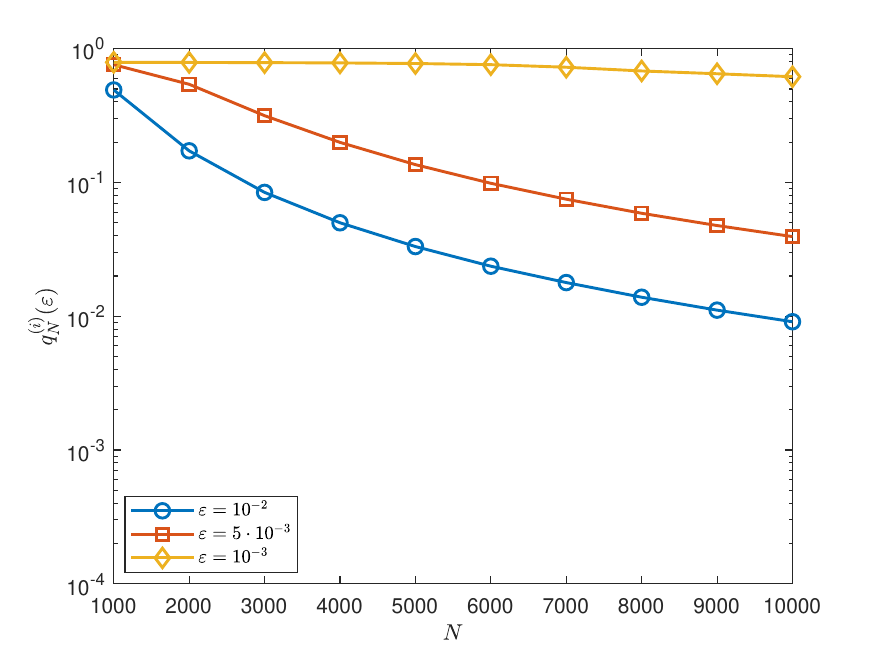}
\caption{Behaviour of $q_N^{(i)}(\varepsilon)$ for the scaled matrices
$H_N^{(1)}$ (left) and $H_N^{(2)}$ (right), with
$N=1000k$, $k=1,\ldots,10$, in the case $\alpha=\beta=2$.}
\label{fig:alpha2beta2_first}
\end{figure}

\begin{figure}[t]
\centering
\includegraphics[width=0.49\textwidth]{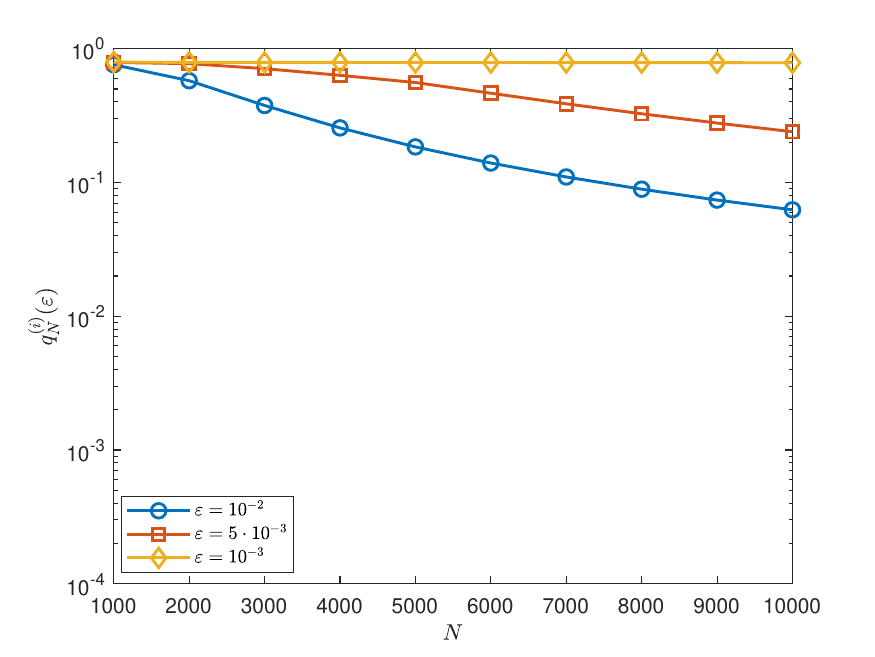}
\includegraphics[width=0.49\textwidth]{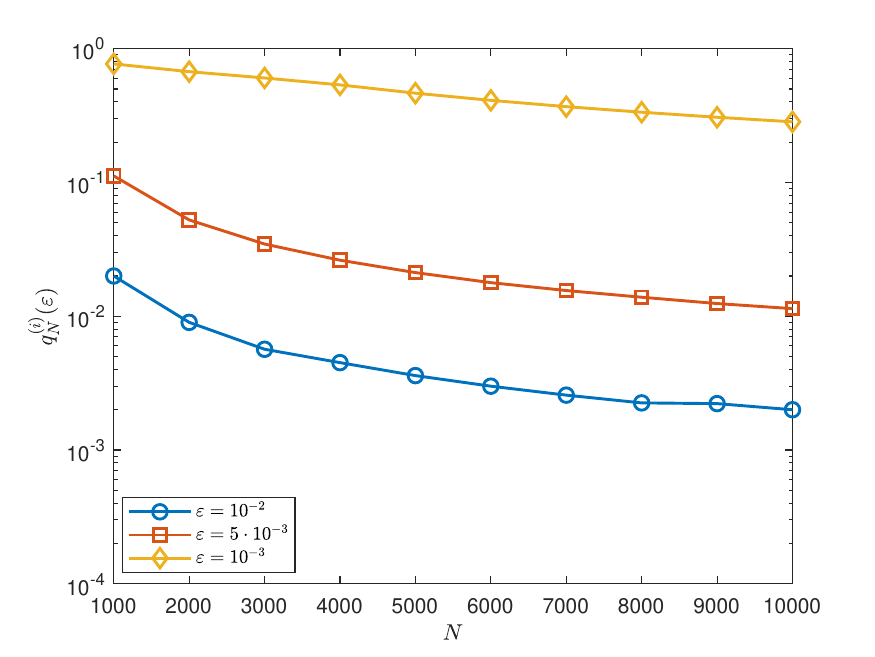}
\caption{Behaviour of $q_N^{(i)}(\varepsilon)$ for the scaled matrices
$H_N^{(3)}$ (left) and $H_N^{(4)}$ (right), with
$N=1000k$, $k=1,\ldots,10$, in the case $\alpha=\beta=2$.}
\label{fig:alpha2beta2_second}
\end{figure}

The same test is then repeated for the nonsymmetric choice
$\alpha=3/2$ and $\beta=1$. The results are reported in
Figures~\ref{fig:alpha32beta1_first}
and~\ref{fig:alpha32beta1_second}. The same qualitative behaviour is observed
in this case. For all the scalings and for all the thresholds considered, the
fraction of singular values greater than the threshold decreases as $N$
increases. This suggests that the observed behaviour is not tied to the
symmetric choice of the Jacobi parameters, but reflects the structure of the
weighted histopolation matrices under study.
\begin{figure}[t]
\centering
\includegraphics[width=0.49\textwidth]{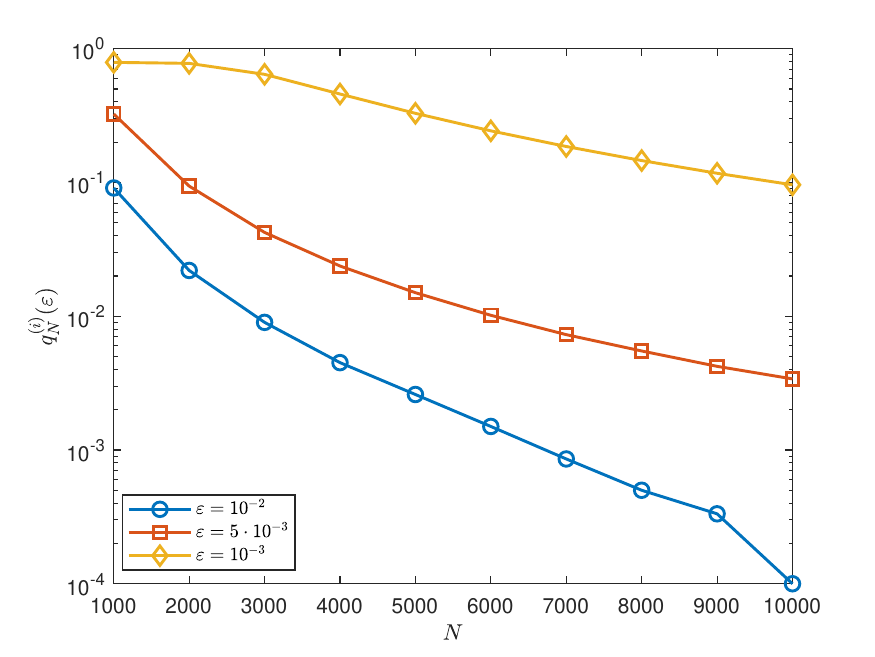}
\includegraphics[width=0.49\textwidth]{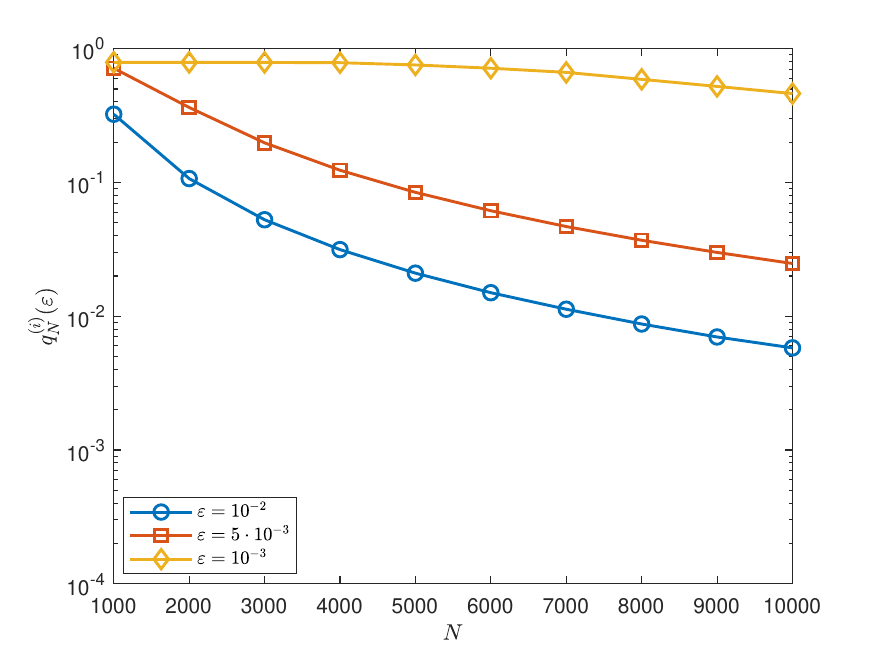}
\caption{Behaviour of $q_N^{(i)}(\varepsilon)$ for the scaled matrices
$H_N^{(1)}$ (left) and $H_N^{(2)}$ (right), with
$N=1000k$, $k=1,\ldots,10$, in the case $\alpha=3/2$ and $\beta=1$.}
\label{fig:alpha32beta1_first}
\end{figure}

\begin{figure}[t]
\centering
\includegraphics[width=0.49\textwidth]{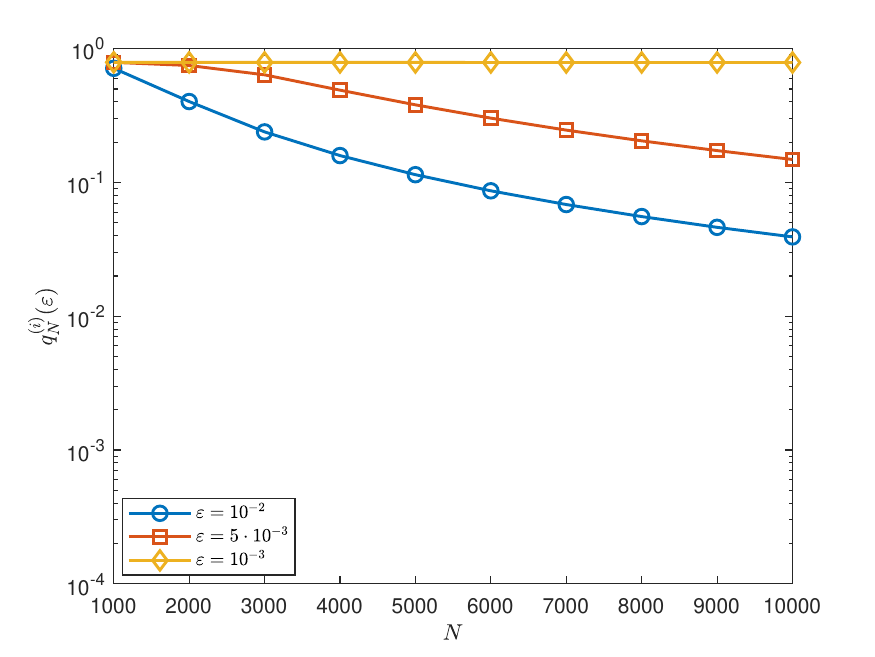}
\includegraphics[width=0.49\textwidth]{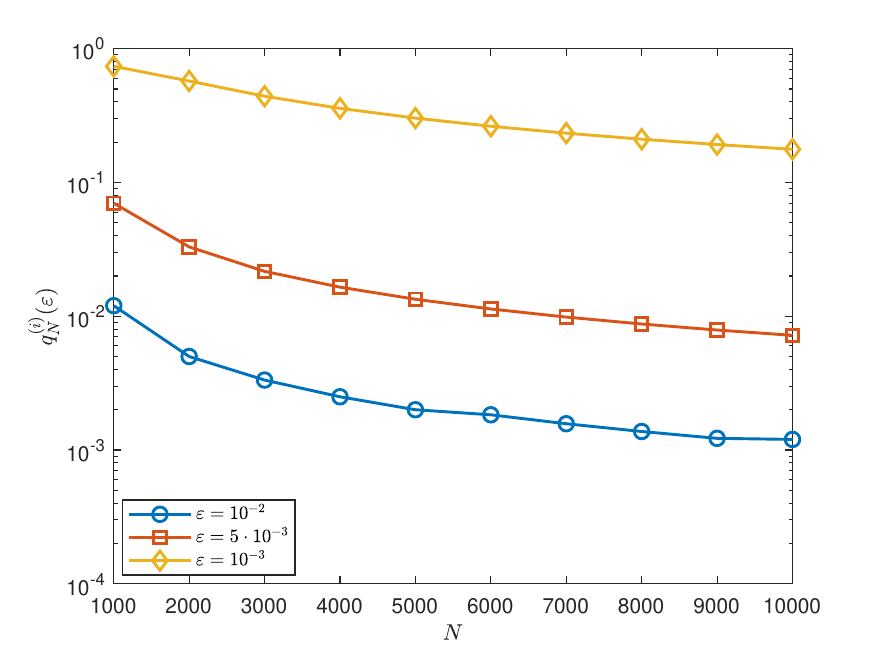}
\caption{Behaviour of $q_N^{(i)}(\varepsilon)$ for the scaled matrices
$H_N^{(3)}$ (left) and $H_N^{(4)}$ (right), with
$N=1000k$, $k=1,\ldots,10$, in the case $\alpha=3/2$ and $\beta=1$.}
\label{fig:alpha32beta1_second}
\end{figure}

For completeness, we also show the singular values of the unscaled
histopolation matrices $H_N$ for different values of $N$. We consider two
families of nonuniform nodes
\[
x_i=-1+2g(i/N), \quad i=0,\ldots,N,
\]
generated by
\begin{equation} \label{numtestg}
    g(y)=\frac{e^y-1}{e-1}
\qquad\text{and}\quad
g(y)=y^2.
\end{equation}
The results are shown in Figure~\ref{fig:HN_unscaled_exp_square}.

\begin{figure}[t]
\centering
\includegraphics[width=0.49\textwidth]{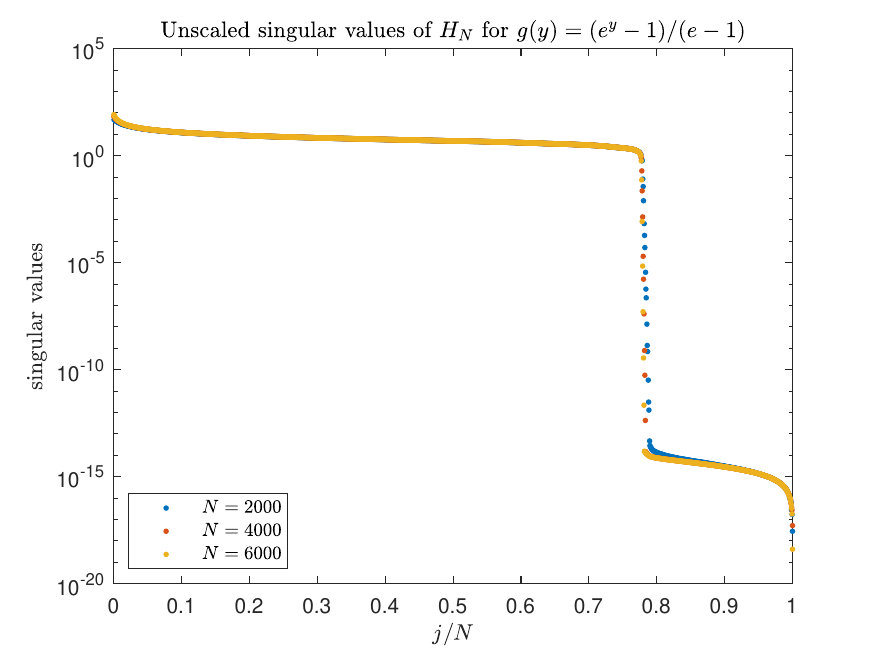}
\includegraphics[width=0.49\textwidth]{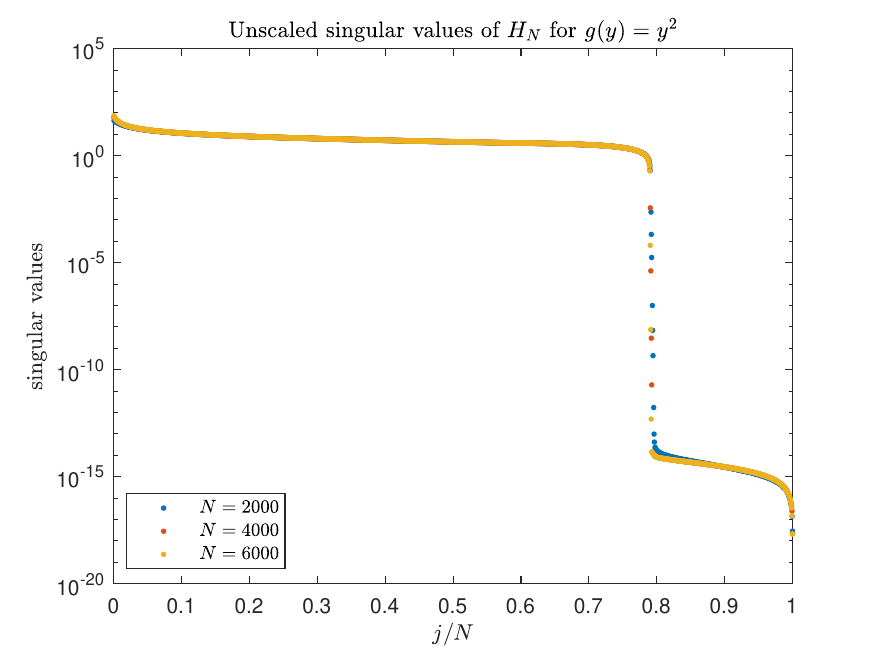}
\caption{Unscaled singular values of $H_N$ for the nodes
$x_i=-1+2g(i/N)$, with $g(y)=(e^y-1)/(e-1)$ (left) and $g(y)=y^2$ (right).}
\label{fig:HN_unscaled_exp_square}
\end{figure}

\begin{remark}\label{conj-glt}
From the plots in Figure \ref{fig:HN_unscaled_exp_square}, we observe that, for most values of $j/N$, the singular
values of $H_N$ seem to follow a well-defined curve. In the final part of the
spectrum, approximately for $j/N\in[0.8,1]$, they rapidly approach zero. This
suggests that the unscaled sequence $\{H_N\}_N$ may admit a singular value
distribution in accordance with Definition \ref{def:sv-ev-distribution}. In particular, from the graph, it can be conjectured that the symbol (if it exists) is identically zero in a set of measure $\frac{1}{5}\mu_k(D)$, where $k=2$ and the canonical choice of the domain is $D=[0,1]\times [-\pi,\pi]$ if $\{H_N\}_N$ has a GLT nature: we leave these refinements of the GLT analysis for future work.
\end{remark}

We close this section by comparing the singular values of the two nontrivial
factors appearing in the GLT analysis with the corresponding symbols. The
comparison is made at the level of singular value distributions. Accordingly,
the singular values are sorted in decreasing order and plotted against the
normalized index. The same idea is used for the symbols. More precisely, we
sample the modulus of the symbol on a uniform tensor grid in the variables
$(y,\theta)$, sort the resulting values in decreasing order, and plot the
corresponding nonincreasing rearrangement on $[0,1]$; see Remark \ref{rearrangements}. 

We first consider the banded matrix $T_N^{(J)}$. According to the analysis above, the symbol of the scaled sequence
$\{N T_N^{(J)}\}_N$ is the function defined in~\eqref{funkappa}. In Figure~\ref{fig:NTJ_symbol}, we show the comparison
for 
\[
N=2000, \quad \text{and} \quad \alpha=\beta=2.
\]
The circles denote the singular values of $N T_N^{(J)}$, and the continuous
curve represents the sorted samples of the modulus of the corresponding symbol.
Their close agreement is consistent with the singular value distribution
predicted by the GLT analysis.

\begin{figure}[t]
\centering
\includegraphics[width=0.49\textwidth]{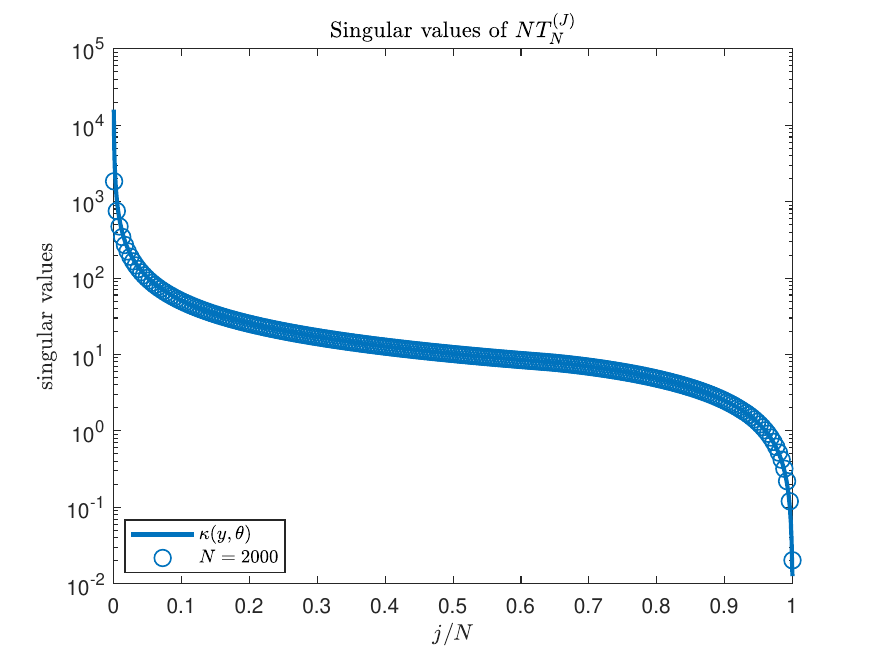}
\caption{Singular values of $N T_N^{(J)}$ for $N=2000$ compared with the
sorted samples of the modulus of its GLT symbol.}
\label{fig:NTJ_symbol}
\end{figure}

We next consider the backward-difference factor. The nodes are again chosen as
\[
x_i=-1+2g(i/N), \qquad i=0,\ldots,N,
\]
with $g$ given in~\eqref{numtestg}. With this convention, the symbol of
$\Delta_N/N$ is
\[
\kappa(y,\theta)
=
\frac{1-e^{i\theta}}{2g'(y)}.
\]
In Figure~\ref{fig:Delta_exp_square_symbol}, we report the comparison for
$N=2000$. In both cases, the sorted singular values of $\Delta_N/N$ closely
follow the sorted samples of $|\kappa(y,\theta)|$, in accordance with the GLT symbol.
\begin{figure}[t]
\centering
\includegraphics[width=0.49\textwidth]{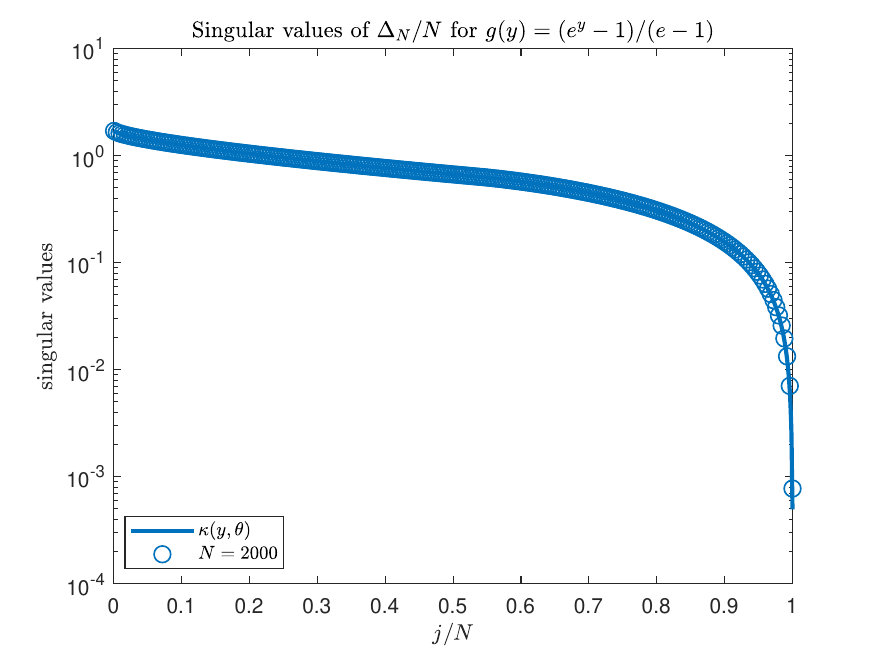}
\includegraphics[width=0.49\textwidth]{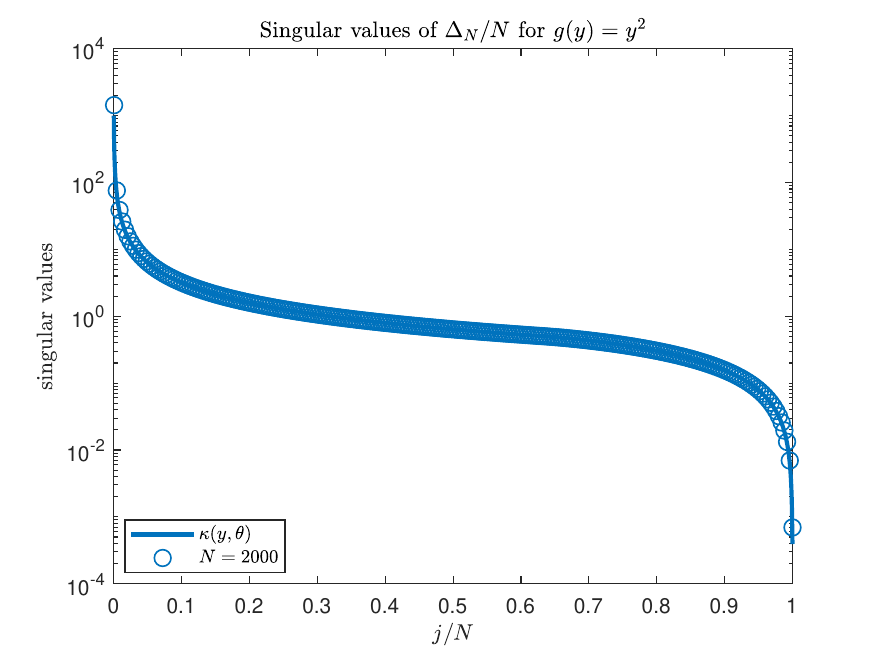}
\caption{Singular values of $\Delta_N/N$ for the nodes
$x_i=-1+2g(i/N)$, with $g(y)=(e^y-1)/(e-1)$ (left) and $g(y)=y^2$ (right).}
\label{fig:Delta_exp_square_symbol}
\end{figure}

Overall, the numerical experiments support the asymptotic framework developed in
the paper. The first set of tests illustrates, under the prescribed scalings, the
decay of the fraction of singular values of the histopolation matrices exceeding
a fixed positive threshold. The plots for the unscaled matrices provide further
numerical insight into the sequence $\{H_N\}_N$ and suggest a possible refinement
of the present analysis. Finally, the comparisons with the sampled symbols give
a direct visualization of the singular value distributions associated with the
main factors in the GLT description.

\section{Conclusions and Future Work}\label{sec6}
In this paper we have studied a weighted histopolation problem on $[-1,1]$
associated with Jacobi weights. Starting from weighted cell averages, we
introduced a reconstruction procedure in which the approximation space is
described through a Jacobi-type basis. This choice is natural for the analysis,
since it leads to weighted primitives whose structure can be handled explicitly.
In particular, the recurrence properties of Jacobi polynomials allow one to
express the primitive sampling matrix in terms of a banded spectral coupling,
thereby providing the link between the histopolation problem and the asymptotic
matrix analysis. We then studied the matrix sequences generated by this construction within the
GLT framework. Under the assumptions on the mesh and on the Jacobi parameters
considered in the paper, this analysis describes the singular value behaviour of
the relevant sequences and shows that the appropriate scaled histopolation
matrices are zero-distributed. This provides a theoretical explanation for the
decay observed in the numerical experiments. The stability discussion was carried out for the same weighted histopolation
problem written in the standard Jacobi basis. In this form, the estimates are
naturally expressed in a mesh-weighted discrete norm and show that the
corresponding histopolation operator exhibits only logarithmic growth with
respect to the discretization size. This complements the GLT analysis by
providing a finite-dimensional stability bound valid at every discretization
level.

Several questions remain open. A first direction is to extend the present
analysis to more general families of orthogonal polynomials, in order to
understand which parts of the structure found here are specific to Jacobi
weights and which persist in a broader setting. In accordance with the numerical experiments and the related statements 
conjectured in Remark \ref{conj-glt}, a second problem concerns a sharper asymptotic description of the unscaled histopolation matrices
$\{H_N\}_N$, and in particular determining the exact GLT symbol of the corresponding matrix sequence. 
Finally, it would be important to prove inverse-type inequalities for $H_N$, under suitable
regularity assumptions on the mesh. Such estimates would complement the
stability bounds obtained here and would give a more complete description of
the conditioning of the weighted histopolation operator.

\section*{Declarations}

\textbf{Corresponding author}\\
Federico Nudo, email federico.nudo@unical.it\\

\noindent
\textbf{Conflict of Interest}\\
The authors declare that they have no conflict of interest.\\

\noindent
\textbf{Funding statement}\\
This research was supported by the GNCS-INdAM 2026 project
\emph{``Metodi polinomiali e kernel per l'approssimazione da dati discreti e integrali con software OS''}
(CUP E53C25002010001).
The work of F. Nudo was funded by the European Union -- NextGenerationEU under the Italian National Recovery and Resilience Plan (PNRR), Mission 4, Component 2, Investment 1.2
\lq\lq Finanziamento di progetti presentati da giovani ricercatori\rq\rq,
pursuant to MUR Decree No.~47/2025.\\

\noindent
\textbf{Author Contributions}\\
Allal Guessab, Federico Nudo and Stefano Serra-Capizzano contributed equally to the conception, development, and writing of this manuscript.
All authors have read and approved the final version of the paper. For this reason, the order of authorship is alphabetical.\\

\noindent
\textbf{Acknowledgement}\\
This research was carried out as part of RITA \textquotedblleft Research ITalian network on Approximation'' and as part of the UMI group \lq\lq Teoria dell'Approssimazione e Applicazioni\rq\rq.\\

\noindent
\textbf{Data Availability}\\
No data were used in this study.

\bibliographystyle{unsrt}
\bibliography{bibliography}

\begin{thebibliography}{10}

\bibitem{Kak:2001:POC}
A.~C. Kak and M.~Slaney.
\newblock {\em Principles of computerized tomographic imaging}.
\newblock SIAM, 2001.

\bibitem{Natterer:2001:TMO}
F.~Natterer.
\newblock {\em The mathematics of computerized tomography}.
\newblock SIAM, 2001.

\bibitem{Palamodov:2016:RFI}
V.~P. Palamodov.
\newblock {\em Reconstruction from integral data}.
\newblock CRC Press Boca Raton, FL, 2016.

\bibitem{Bosner:2020:AOC}
T.~Bosner, B.~Crnkovi{\'c}, and J.~{\v{S}}kifi{\'c}.
\newblock Application of {CCC}--{S}choenberg operators on image resampling.
\newblock {\em BIT Numer. Math.}, 60:129--155, 2020.

\bibitem{Bruni:2025:PHO}
L.~Bruni~Bruno, F.~Dell'Accio, W.~Erb, and F.~Nudo.
\newblock Polynomial {H}istopolation {O}n {M}ock-{C}hebyshev {S}egments.
\newblock {\em J. Sci. Comput.}, 104:65, 2025.

\bibitem{Bruno:2026:BPH}
L.~Bruni~Bruno, F.~Dell'Accio, W.~Erb, and F.~Nudo.
\newblock Bivariate polynomial histopolation techniques on {P}adua, {F}ekete and {L}eja triangles.
\newblock {\em Adv. Comput. Math.}, 52(37), 2026.

\bibitem{DellAccio:2026:AGP}
F.~Dell’Accio, A.~Guessab, M.~Kbiri~Alaoui, and F.~Nudo.
\newblock A general probability density framework for local histopolation and weighted function reconstruction from mesh line integrals.
\newblock {\em Numer. Algorithms}, 2026.

\bibitem{Nudo:2026:FRU}
F.~Nudo.
\newblock Function reconstruction using a {J}acobi-weighted quadratic enriched histopolation method.
\newblock {\em Math. Comput. Simul.}, 245:512--529, 2026.

\bibitem{DellAccio:2026:NAM}
F.~Dell’Accio, A.~Guessab, G.~V. Milovanovi{\'c}, and F.~Nudo.
\newblock Nonconforming approximation methods for function reconstruction on general polygonal meshes via orthogonal polynomials.
\newblock {\em IMA J. Numer. Anal.}, 2026.

\bibitem{Demichelis:1995:GAO}
V.~Demichelis.
\newblock Graphic applications of some interpolating weighted mean functions.
\newblock {\em Rocky Mt. J. Math.}, pages 1277--1286, 1995.

\bibitem{Bose:1965:FSA}
A.~K. Bose.
\newblock Functions satisfying a weighted average property.
\newblock {\em Trans. Amer. Math. Soc.}, 118:472--487, 1965.

\bibitem{Diagana:2011:TEO}
T.~Diagana.
\newblock The existence of a weighted mean for almost periodic functions.
\newblock {\em Nonlinear Anal. Theory Methods Appl.}, 74:4269--4273, 2011.

\bibitem{Pesenson:2019:ASA}
I.~Z. Pesenson.
\newblock Average sampling and average splines on combinatorial graphs.
\newblock In {\em 2019 13th International conference on Sampling Theory and Applications (SampTA)}, pages 1--4. IEEE, 2019.

\bibitem{Pesenson:2019:WSA}
I.~Z. Pesenson.
\newblock Weighted sampling and weighted interpolation on combinatorial graphs.
\newblock {\em arXiv preprint arXiv:1905.02603}, 2019.

\bibitem{Guessab:2025:QWH}
A.~Guessab and F.~Nudo.
\newblock Quadratic {W}eighted {H}istopolation on {T}etrahedral {M}eshes with {P}robabilistic {D}egrees of {F}reedom.
\newblock {\em BIT Numer. Math. (accepted)}, 2026.

\bibitem{Garoni:2018:GLT1}
C.~Garoni and S.~Serra-Capizzano.
\newblock {\em Generalized {L}ocally {T}oeplitz {S}equences: {T}heory and {A}pplications: Volume {I}}.
\newblock Springer, Cham, 2018.

\bibitem{Garoni:2018:GLT2}
C.~Garoni and S.~Serra-Capizzano.
\newblock {\em Generalized {L}ocally {T}oeplitz {S}equences: {T}heory and {A}pplications: Volume {II}}.
\newblock Springer, Cham, 2018.

\bibitem{Dorostkar:2016:SAO}
A.~Dorostkar, M.~Neytcheva, and S.~Serra-Capizzano.
\newblock Spectral analysis of coupled {PDE}s and of their {S}chur complements via generalized locally {T}oeplitz sequences in 2{D}.
\newblock {\em Comput. Methods Appl. Mech. Eng.}, 309:74--105, 2016.

\bibitem{Salinelli:2016:EES}
E.~Salinelli, S.~Serra-Capizzano, and D.~Sesana.
\newblock Eigenvalue-eigenvector structure of {S}choenmakers--{C}offey matrices via {T}oeplitz technology and applications.
\newblock {\em Linear Algebra Appl.}, 491:138--160, 2016.

\bibitem{Garoni:2019:SBA}
C.~Garoni, H.~Speleers, S.~Ekstr{\"o}m, A.~Reali, S.~Serra-Capizzano, and T.~J.R. Hughes.
\newblock Symbol-{B}ased {A}nalysis of {F}inite {E}lement and {I}sogeometric {B}-{S}pline {D}iscretizations of {E}igenvalue {P}roblems: {E}xposition and {R}eview.
\newblock {\em Arch. Comput. Methods Eng.}, 26(5):1639--1690, 2019.

\bibitem{Benedusi:2024:MEC}
P.~Benedusi, P.~Ferrari, M.~E. Rognes, and S.~Serra-Capizzano.
\newblock Modeling excitable cells with the {EMI} equations: spectral analysis and iterative solution strategy.
\newblock {\em J. Sci. Comput.}, 98(3):58, 2024.

\bibitem{Szego:1975:OP}
G.~Szegő.
\newblock {\em Orthogonal polynomials}.
\newblock AMS Colloquium Publications, Vol. 23, 4th ed., 1975.

\bibitem{Milovanovic:1997:OPS}
G.~V. Milovanovi{\'c}.
\newblock Orthogonal polynomial systems and some applications.
\newblock {\em Inner Product Spaces and Applications, Pitman Res. Notes Math. Ser}, 376:115--182, 1997.

\bibitem{Erdelyi:1953:HTF}
A.~Erd{\'e}lyi, W.~Magnus, F.~Oberhettinger, and F.~G. Tricomi.
\newblock {\em Higher Transcendental Functions}.
\newblock Krieger, Malabar, Fla. Vol. 1, 1981.
\newblock Reprint of the 1953 edition.

\bibitem{Nevai:1994:GJW}
T.~Erd{\'e}lyi, A.~P. Magnus, and P.~Nevai.
\newblock Generalized {J}acobi weights, {C}hristoffel functions, and {J}acobi polynomials.
\newblock {\em SIAM J. Math. Anal.}, 25:602--614, 1994.

\bibitem{Bruno:2024:PIO}
L.~Bruni~Bruno and W.~Erb.
\newblock Polynomial interpolation of function averages on interval segments.
\newblock {\em SIAM J. Numer. Anal.}, 62:1759--1781, 2024.

\bibitem{Bruno:2025:OTC}
L.~Bruni~Bruno and S.~Serra-Capizzano.
\newblock On the conditioning of histopolation.
\newblock {\em arXiv preprint arXiv:2511.15395}, 2025.

\bibitem{Bhatia:1997:MAN}
R.~Bhatia.
\newblock {\em Matrix Analysis}.
\newblock Springer, New York, 1997.

\bibitem{Barbarino:2022:CAT}
G.~Barbarino, D.~Bianchi, and C.~Garoni.
\newblock Constructive approach to the monotone rearrangement of functions.
\newblock {\em Expo. Math.}, 40:155--175, 2022.

\bibitem{Ekstrom:2018:EAE}
S-E Ekstr{\"o}m and S.~Serra-Capizzano.
\newblock Eigenvalues and eigenvectors of banded {T}oeplitz matrices and the related symbols.
\newblock {\em Numer. Linear Algebra Appl.}, 25:e2137, 2018.

\bibitem{Mazza:2019:SAA}
M.~Mazza, A.~Ratnani, and S.~Serra-Capizzano.
\newblock Spectral analysis and spectral symbol for the 2{D} curl-curl (stabilized) operator with applications to the related iterative solutions.
\newblock {\em Math. Comput.}, 88:1155--1188, 2019.

\bibitem{Serra:2001:DRO}
S.~Serra-Capizzano.
\newblock Distribution results on the algebra generated by {T}oeplitz sequences: a finite-dimensional approach.
\newblock {\em Linear Algebra Appl.}, 328:121--130, 2001.

\bibitem{Serra:2003:GLT}
S.~Serra-Capizzano.
\newblock Generalized locally {T}oeplitz sequences: spectral analysis and applications to discretized partial differential equations.
\newblock {\em Special issue on structured matrices: analysis, algorithms and applications (Cortona, 2000), Linear Algebra Appl.}, 366:371--402, 2003.

\bibitem{Serra:2006:TGC}
S.~Serra-Capizzano.
\newblock The {GLT} class as a generalized {F}ourier analysis and applications.
\newblock {\em Linear Algebra Appl.}, 419:180--233, 2006.

\bibitem{Tilli:1998:LTS}
P.~Tilli.
\newblock Locally {T}oeplitz sequences: spectral properties and applications.
\newblock {\em Linear Algebra Appl.}, 278:91--120, 1998.

\bibitem{Barbarino:2020:BGL2}
G.~Barbarino, C.~Garoni, and S.~Serra-Capizzano.
\newblock Block generalized locally {T}oeplitz sequences: theory and applications in the unidimensional case.
\newblock {\em Electron. Trans. Numer. Anal.}, 53:28--112, 2020.

\bibitem{Barbarino:2020:BGL1}
G.~Barbarino, C.~Garoni, and S.~Serra-Capizzano.
\newblock Block generalized locally {T}oeplitz sequences: theory and applications in the multidimensional case.
\newblock {\em Electron. Trans. Numer. Anal.}, 53:113--216, 2020.

\bibitem{Serra:2003:AOP}
S.~Serra-Capizzano and C.~Tablino-Possio.
\newblock Analysis of preconditioning strategies for collocation linear systems.
\newblock {\em Linear Algebra Appl.}, 369:41--75, 2003.

\bibitem{Benedusi:2026:AOE}
P.~Benedusi, S.~Riva, L.~Belluzzi, and S.~Serra-Capizzano.
\newblock Analysis of eigenvalue clustering leads to optimal scaling in numerical radiative transfer.
\newblock {\em arXiv preprint arXiv:2602.21958}, 2026.

\bibitem{Carl:2009:OWI}
B.~Carl.
\newblock On a {W}eyl inequality of operators in {B}anach spaces.
\newblock {\em Proc. Am. Math. Soc.}, 137:155--159, 2009.

\end{thebibliography}

\end{document}